\documentclass[english,10pt,oneside]{amsart}


\usepackage[utf8]{inputenc}
\usepackage[T1]{fontenc}
\usepackage{frcursive}
\usepackage{lmodern}
\usepackage[a4paper]{geometry}
\usepackage[french]{babel}
\usepackage{xcolor}
\usepackage{hyperref}

\usepackage{csquotes}
\usepackage[backend=biber]{biblatex}
\addbibresource{bibliozotero.bib}

\usepackage{listings}

\usepackage{amsmath,mathtools,amssymb,mathrsfs,eucal}

\usepackage{graphicx}
\usepackage{tikz-cd}
\usepackage[all]{xy}

\usepackage{amsthm}

\usepackage{todonotes}


\usetikzlibrary{matrix,arrows}

\swapnumbers

\definecolor{darkgreen}{rgb}{0,0.45,0}

\hypersetup{
colorlinks,
linktoc=all,
linkcolor={darkgreen},
citecolor={blue!50!black},
urlcolor={blue!80!black}
}

\theoremstyle{definition}


\theoremstyle{plain}
\newtheorem{theo}{Théorème}[section]

\newtheorem{prop}[theo]{Proposition}

\newtheorem{defi}[theo]{Définition}

\newtheorem{lemm}[theo]{Lemme}

\theoremstyle{remark}
\newtheorem{rema}[theo]{Remarque}

\newtheorem{exem}[theo]{Exemple}

\numberwithin{equation}{subsubsection}
\author{Mathieu Daylies}
\address{mathieu.daylies@imj-prg.fr}
\title[Descente fidèlement plate et algébrisation en géométrie de Berkovich]{Descente fidèlement plate et algébrisation en géométrie de Berkovich}

\usepackage{stmaryrd}

\def\Spec{\operatorname{Spec}}
\def\Hom{\operatorname{Hom}}







\theoremstyle{plain}
\newtheorem{theointro}{Théorème}

\begin{document}

\maketitle

Cet article étudie les questions de descente relative à la topologie sur les espaces de Berkovich dont les flèches couvrantes sont les morphismes plats et surjectifs. Nous donnons des conditions suffisantes pour qu'une catégorie fibrée donnée soit un champ pour cette topologie. Ensuite, nous utilisons ce résultat pour montrer que le foncteur tiré en arrière de la catégorie des $S$-espaces vers la catégorie des $S'$-espaces munis de données de descente est pleinement fidèle, et nous montrons l'effectivité de certaines données de descente au dessus de $S'$. Enfin, nous montrons que la propriété d'être algébrique pour un morphisme entre l'analytification de deux schémas est une propriété locale pour la topologie sus-citée.




\section{Introduction}

\subsection{Motivation}

Les résultats de descente sont depuis les travaux de l'école de Grothendieck devenus indispensables et omniprésents en géométrie algébrique moderne. Le but de cet article est d'énoncer et de démontrer des résultats similaires aux résultats de l'exposé 8 de \autocite{grothendieck2002rev} dans le cadre des espaces analytiques au sens de Berkovich. 

Une notion de platitude dans les espaces de Berkovich a été dégagée et étudiée de manière approfondie par Ducros dans \autocite{ducros2017families}, mais jusqu'à ce jour, il n'existait pas d'analogue à des résultats fondamentaux de descente fidèlement plate existant dans le cadre des schémas, comme le théorème 5.2 ou bien le théorème 2.1 de l'exposé 8 de \autocite{grothendieck2002rev}, et l'utilisation de la platitude au sens de Ducros en général n'en est qu'à ses débuts. 

Des résultats de descente dans le cadre analytique ont bien été obtenus par Bosch, Görtz et Ducros dans les articles \autocite{Coherentmodulesandtheirdescentonrelativerigidspaces} et \autocite{ducros2020devisser} mais leurs résultats portaient uniquement sur la descente de modules cohérents (sur des espaces correspondant uniquement au cas $\textit{strict}$ en théorie de Berkovich pour Bosch et Görtz). De même, Conrad et Temkin obtiennent des résultats de descente. L'article \autocite{AIF_2006__56_4_1049_0} de Conrad établit ainsi la pleine fidélité de certains tirés en arrière dans le cas strict en utilisant les techniques de Raynaud, tandis que l'article de Conrad et Temkin \autocite{conrad2019descent} étudie de manière approfondie quelles sont les propriétés des morphismes d'espaces de Berkovich qui se descendent par des changements de base variés. 


\subsection{Nos méthodes}

On peut identifier deux éléments qui distinguent la théorie de la descente dans les espaces analytiques de la théorie de la descente schématique et empêchent de transposer $\mathit{verbatim}$ les techniques schématiques. Premièrement, la présence dans les produits fibrés de produits tensoriels $\textit{complétés}$, et deuxièmement le fait que la notion de quasi-cohérence est une notion délicate en géométrie non archimédienne (voir \autocite{AIF_2006__56_4_1049_0} à ce sujet), ce qui empêche de recourir à la construction du spectre d'une algèbre quasi-cohérente générale comme en théorie des schémas.

La méthode que nous utiliserons tout au long de ce texte est d'appliquer la stratégie suivie par Ducros dans \autocite{ducros2020devisser} pour démontrer la descente fidèlement plate des modules cohérents. Le cadre est le suivant : supposons que nous voulions montrer qu'un certain morphisme plat et surjectif $p:S'\to S$ entre espaces $k$-affinoïdes vérifie une propriété de descente $P$ (e.g. être un morphisme de descente pour une certaine catégorie fibrée), qui possède de bonnes propriétés à la composition.

- On étudie d'abord le cas où le morphisme est de la forme $p:\mathcal{M}(\mathcal{A}_r)\to\mathcal{M(A)}$ avec $r$ un polyrayon $k$-libre. On peut alors se baser sur des calculs explicites faisant intervenir des séries convergentes. Cela nous permet de nous ramener au cas où $p$ est un morphisme entre espaces stricts.

- On utilise ensuite le théorème 9.1.3 de \autocite{ducros2017families}, qui est l'analogue d'un résultat schématique (voir la proposition 19.2.9 de \autocite{PMIHES_1967__32__5_0}) et qui exhibe dans le cas strict l'existence de multisections quasi-finies et plates au dessus de $S$ au morphisme $p$. Dans le cas où le morphisme $p$ possède une section, c'est un point clé et classique que les propriétés de descente sont alors automatiques, ce qui permet de nous ramener au cas où le morphisme $p$ est de dimension relative nulle.

- On utilise enfin un théorème de factorisation dû à Ducros en 8.4.6 de \autocite{ducros2017families}, qui raffine dans notre cas l'analogue non archimédien du théorème principal de Zariski, démontré aussi par Ducros dans \autocite{ducros_2007}, et qui fournit une factorisation agréable d'un morphisme plat de dimension relative nulle, pour se ramener d'une part au cas où $p$ est fini, plat et surjectif, et d'autre part au cas où $p$ est le morphisme plat et surjectif correspondant à un $G$-recouvrement. Ce dernier cas est assez facile pour les propriétés étudiées ici, qui sont toujours $G$-locales. Le cas fini, plat et surjectif est alors souvent traité en utilisant la descente fidèlement plate schématique puisque dans ce cas là, le produit tensoriel et le produit tensoriel complété coïncident.

Remarquons que cette méthode générale s'applique pour des morphismes plats et surjectifs entre espaces $k$-analytiques, et ne permet pas d'étudier la question de descente relative à une extension quelconque de corps valuées $\mathcal{M}(L)\to \mathcal{M}(k)$, qui semble être une question plus difficile en général (c'est déjà le cas pour l'étude de la descente des propriétés de morphismes, voir \autocite{conrad2019descent} pour plus de détails).

\subsection{Résultats principaux et vue d'ensemble de l'article} 
Considérons un corps  $k$ non-archimédien complet. Dans la deuxième section, nous établissons le théorème suivant $\ref{fibfi}$, qui fournit une condition générale pour qu'une catégorie fibrée fixée definie au dessus de la catégorie $\mathsf{C}$ des espaces $k$-affinoïdes soit un champ pour la topologie dont les flèches couvrantes sont les morphismes plats et surjectif. On rappelle comme dans le paragraphe 3.1.2 de \autocite{vistoli2005notes} que la donnée d'une catégorie fibrée $\mathsf{F}\to \mathsf{C}$ et de certaines flèches cartésiennes distinguées dans $\mathsf{F}$ est équivalente à la donnée d'un pseudo-foncteur au dessus de $\mathsf{C}$ par la correspondance qui à un objet $S$ de $\mathsf{C}$ associe la fibre $\mathsf{F}(S)$ de $\mathsf{F}$ au dessus de $S$, et qu'un morphisme $p:S'\to S$ de $\mathsf{C}$ est de descente effective (resp. de descente) si le foncteur tiré en arrière de la catégorie $\mathsf{F}(S)$ vers la sous catégorie des objets de $\mathsf{F}(S')$ munis de données de descente est une équivalence de catégories (resp. pleinement fidèle).

\begin{theointro} Considérons $\Psi$ un pseudo-foncteur au dessus de $\mathsf{C}$, avec $\mathsf{C}$ la catégorie des espaces analytiques $k$-affinoïdes. On suppose que :
\begin{enumerate}
\item Pour toute algèbre $k$-affinoïde $\mathcal{A}$, et tout polyrayon $k$-libre $r\in (\mathbb{R}_+ ^*)^n$, le morphisme $\mathcal{M}(\mathcal{A}_r)\to \mathcal{M(A)}$ est un morphisme de descente effective relativement à $\Psi$.
\item Tout morphisme plat, fini et surjectif $\mathcal{M(B)}\to \mathcal{M(A)}$ est un morphisme de descente effective relativement à $\Psi$.
\item Tout $G$-recouvrement fini $\amalg_{i=1}^n S_i\to S$ de $S$ par des domaines affinoïdes en nombre fini est un morphisme de descente effective relativement à $\Psi$.

\end{enumerate}

Alors tout morphisme plat et surjectif entre espaces affinoïdes est un morphisme de descente effective relativement à $\Psi$.
\end{theointro}
On donne de même un résultat similaire pour qu'un pseudo-foncteur au dessus de $\mathsf{C}$ soit un pré-champ en $\ref{fibfibis}$.

Dans la troisième section, on applique les théorèmes précédents à plusieurs pseudo-foncteurs particuliers. Ainsi, on obtient d'abord le théorème $\ref{pf}$ qui concerne les morphismes plats et proprement surjectifs, c'est à dire les morphismes d'espaces analytiques plats dont la base est $G$-recouverte par des domaines analytiques quasi-compacts qui soient l'image d'un domaine analytique quasi-compact de la source. Cette hypothèse couvre en particulier les morphismes plats, surjectifs, topologiquement propres (universellement fermés comme applications continues), et les morphismes sans bord, plats et surjectifs.

\begin{theointro}
Considérons le pseudo-foncteur $\Psi$ qui à un espace $k$-analytique $S$ associe la catégorie des espaces $k$-analytiques au dessus de $S$. Alors les morphismes plats et proprement surjectifs sont des $\textup{morphismes de descente}$ pour $\Psi$.

Autrement dit, les morphismes plats et proprement surjectifs sont des épimorphismes effectifs universels de la catégorie des espaces $k$-analytiques ; autrement dit, si l'on se donne un morphisme plat et proprement surjectif $p:S'\to S$ entre espaces $k$-analytiques, alors le foncteur de la catégorie des $S$-espaces analytiques vers la catégorie des $S'$-espaces munis de données de descente relativement au morphisme $p$ est pleinement fidèle.
\end{theointro}

On obtient ensuite le théorème $\ref{cool}$ qui constitue un résultat d'effectivité.
Un morphisme entre espaces $k$-analytiques $p:S'\to S$ est dit $\textit{presque affinoïde}$ s'il existe un $G$-recouvrement de la base $S$ par des domaines affinoïdes dont chaque image inverse par $p$ est un domaine affinoïde de $S'$. Cette notion est introduite ici pour remédier au fait qu'il n'existe pas en théorie de Berkovich de bonne notion de morphismes affinoïdes, qui soit l'analogue des morphismes affines en théorie des schémas. En effet, Liu a montré dans \autocite{liu1990} l'existence d'un espace rigide non affinoïde mais dotés d'un morphisme presque affinoïde vers un espace $k$-affinoïde.
\begin{theointro}
Considérons le pseudo-foncteur $\Psi$ qui à un espace $k$-affinoïde $S$ associe la catégorie des espaces $k$-analytiques au dessus de $S$ dont le morphisme structural est presque affinoïde. Alors les morphismes plat et surjectifs sont des $\textup{morphismes de descente effectif}$ pour ce pseudo-foncteur.

Autrement dit, le pseudo-foncteur qui à un espace $k$-affinoïde $S$ associe $\Psi S$ est un champ pour la topologie dont les flèches couvrantes sont plates surjectives ; autrement dit si l'on se donne un morphisme plat et surjectif $p:S'\to S$ entre espaces $k$-affinoïdes, alors le foncteur de la catégorie des $S$-espaces analytiques dont le morphisme structural est presque affinoïde vers la catégorie des $S'$-espaces analytiques dont le morphisme structural est presque affinoïde et munis de données de descente relativement au morphisme $p$ est une équivalence de catégories.
\end{theointro}
La quatrième partie est dévolue à l'application des résultats précédents. En particulier, on démontre en $\ref{genera}$ une généralisation de la proposition A-1 de \autocite{rémy2009bruhattits}, et on retrouve en $\ref{retrouv}$ des résultats obtenus par Conrad et Temkin dans \autocite{conrad2019descent}.

Dans la dernière section, nous nous intéressons à la descente fidèlement plate du caractère algébrique. Un espace $k$-analytique $X$ au dessus de $\mathcal{A}$ sera dit algébrisable si c'est l'analytifié d'un $\mathcal{A}$-schéma $\mathcal{X}$. Un morphisme d'espaces $k$-analytiques entre l'analytifié de deux $\mathcal{A}$-schémas est algébrique si c'est l'analytification d'un morphisme de $\mathcal{A}$-schémas. L'objectif serait d'avoir un résultat de descente d'objets d'algébrisation, qui pourrait faire l'objet d'un futur travail. La question serait la suivante : si l'on se donne un morphisme plat et surjectif $p:\mathcal{M(B)}\to\mathcal{M(A)}$ entre espaces $k$-affinoïdes, et un $\mathcal{A}$-espace analytique $X$, est-ce que le caractère algébrisable de $X_\mathcal{B}$ entraine le caractère algébrisable de $X$ ? 

Remarquons que le $\mathcal{A}$-schéma $\mathcal{X}$ dont l'analytification fournirait $X$ n'a aucune raison d'être unique dans le cas général, mais par GAGA, il l'est dans le cas où $X$ est propre au dessus de $\mathcal{A}$, ce qui serait sans doute une hypothèse naturelle pour commencer à étudier ce problème.

Dans cet article, nous étudions uniquement la question intermédiaire suivante plus accessible :  la propriété pour un morphisme d'être algébrique est-elle locale pour la topologie plate surjective ? Nous répondons par l'affirmative à cette question avec le théorème $\ref{algmorph}$ :

\begin{theointro}
Considérons $\mathcal{M(B)}\to \mathcal{M(A)}$ un morphisme plat et surjectif entre espaces $k$-affinoïdes. Considérons maintenant $\mathcal{X}$ et $\mathcal{Y}$ deux $\mathcal{A}$-schémas localement de type fini. Alors un morphisme $f:\mathcal{X}^\mathrm{an}\to\mathcal{Y}^\mathrm{an}$ est algébrique si et seulement si son changement de base $f_\mathcal{B}:\mathcal{X}_\mathcal{B}^\mathrm{an}\to \mathcal{Y}_\mathcal{B}^\mathrm{an} $ est algébrique.

\end{theointro}

\subsection{Remerciement}
Ce travail n'aurait vu le jour sans les idées, les suggestions, l'encouragements, et l'accompagnement bienveillant d'Antoine Ducros. Qu'il en soit remercié. 
Merci aussi à Marco Maculan pour ses remarques pertinentes et à Jean-Michel Fischer pour son accompagnement mathématique et Latexien.

\subsection{Notations et rappels techniques}
\subsubsection{Le cadre général} On fixe pour la suite du texte un corps $k$ muni d'une valeur absolue ultramétrique qui peut être triviale et pour laquelle le corps $k$ est complet. Nous travaillerons avec la notion d'espace $k$-analytique au sens de Berkovich et considérerons comme connues les bases de la théorie, exposée dans \autocite{berkovich1993etale}.

\subsubsection{Topologie, $G$-topologie et topologie de Zariski} Considérons un espace $k$-analytique $X$. Il est fourni avec une topologie au sens usuel, et une topologie de Grothendieck ensembliste plus fine, appelée $G$-topologie dont les éléments sont les domaines analytiques de $X$. Le site correspondant est noté $X_G$, et $X$ est muni d'un faisceau de $k$-algèbres que nous noterons $\mathcal{O}_X$, qui est cohérent (le résultat est énoncé et démontré dans \autocite{MR2566967}). Tous les faisceaux cohérents en jeu dans cet article seront toujours définis sur le site $X_G$, et s'il n'y a pas d'ambiguité, on notera $\mathcal{O}(X)$ l'algèbre des sections globales sur $X$. Par abus de langage, on appelera encore $G$-recouvrement tout morphisme d'espace $k$-analytique de la forme $\amalg_{i\in I} X_i \to X$ pour un $G$-recouvrement $(X_i)_{i\in I}$ de $X$ par des domaines analytiques.

Si $\mathcal{J}$ est un faisceau cohérent d'idéaux sur $X$, alors on note $V(\mathcal{J})$ l'ensemble des $x\in X$ tel que $f(x)=0$ pour toute section $f$ de $\mathcal{J}$ au voisinage de $x$. Les parties de la forme $V(\mathcal{J})$ forment les fermés d'une topologie plus grossière que la topologie usuelle sur $X$, et appelée topologie de Zariski sur $X$. Lorsque $X$ est $k$-affinoïde, la topologie de Zariski sur $X$ est l'image réciproque de la topologie de Zariski sur $X^\textrm{al}:=\Spec{A}$ par l'application naturelle $X\to X^\textrm{al}$.

\subsubsection{Topologie et morphismes}  Un morphisme d'espaces $k$-analytiques $f:Y\to X$ est topologiquement propre s'il est universellement fermé dans la catégorie des espaces topologiques, et cette propriété est équivalente à ce que l'image inverse de chaque domaine affinoïde de $X$ soit quasi-compacte dans $Y$. Un morphisme $f:Y\to X$ d'espaces $k$-analytiques sera dit proprement surjectif s'il existe un $G$-recouvrement de $X$ par des domaines analytiques quasi-compacts qui soient chacun d'entres eux l'image d'un domaine analytique quasi-compact de $Y$.

\subsubsection{Polyrayons et section de Shilov} Un polyrayon est une famille finie de nombres réels strictements positifs. Si l'on se donne un polyrayon $r=(r_1,..,r_n)$ et une famille d'indéterminées $\underline{T}=(T_1,..,T_n)$, alors on note $k_r$ la $k$-algèbre affinoïde des séries formelles $\sum_{I\in \mathbb{Z}^n}a_I \underline{T}^I$ à coefficient dans $k$ vérifiant $\vert a_I \vert r^I\to 0$ quand $\vert I \vert \to +\infty$. La flèche $\sum_{i\in \mathbb{Z}^n} a_I \underline{T}^I\mapsto \textrm{max}_{i\in \mathbb{Z}^n}\vert a_I \vert r^I\in \mathbb{R}$ est une norme multiplicative sur $k_r$. Si la famille $r$ est $k$-libre c'est à dire libre lorsqu'on la voit comme une famille d'éléments du $\mathbb{Q}$-espace vectoriel $\mathbb{Q}\otimes_\mathbb{Z} \mathbb{R}_+^*/\vert k^* \vert$, alors $k_r$ est un corps. 

Si $X$ est un espace $k$-analytique, on note $X_r:=X\times_k \mathcal{M}(k_r)$ et si $\mathcal{A}$ est une algèbre $k$-affinoïde, on note $\mathcal{A}_r:=\mathcal{A}\hat{\otimes}_k k_r$. Pour tout $x\in X$, si $r$ est $k$-libre, la fibre de $X_r\to X$ en $x$ s'identifie à l'espace $\mathcal{H}(x)$-affinoïde $\mathcal{M}(\mathcal{H}(x)\hat{\otimes}_k k_r)$ où l'on a noté $\mathcal{H}(x)$ le corps résiduel complété de $x$ (on renvoie à \autocite{berkovich2012spectral} pour la définition de $\mathcal{H}(x))$. On notera $\sigma (x)$ l'unique semi-norme sur $\mathcal{H}(x)\hat{\otimes}_k k_r$ qui envoie un élément $\sum_I a_I \underline{T}^I\in \mathcal{H}(x)\hat{\otimes}_k k_r$ sur $\max_I \vert a_i \vert (x) r^I$. L'application $x\mapsto \sigma (x)$ est alors une section continue de $X_r\to X$, que l'on appelera la section de Shilov (cf section 3.2.2 de \autocite{berkovich2012spectral}). Pour $X$ un espace $k$-analytique et $r$ un polyrayon $k$-libre, une extension de polyrayon sera un morphisme d'espaces $k$-analytiques de la forme $X_r\to X$.

\subsubsection{Finitude et complété} Considérons une $k$-algèbre de Banach noethérienne $\mathcal{A}$, $M$ un $\mathcal{A}$-module fini, et une $\mathcal{A}$-algèbre de Banach noethérienne $\mathcal{B}$. Alors l'application canonique $j:M\otimes_\mathcal{A}\mathcal{B}\to M\hat{\otimes}_\mathcal{A}\mathcal{B}$ est bijective par \autocite{bosch1984non}, corollaire 3.7.3.6.

\subsubsection{Dimension} On dispose en géométrie de Berkovich d'une théorie de la dimension, qui est exposée dans \autocite{berkovich2012spectral} ou bien \autocite{ducros_2007}. Nous la supposerons connue. Notre terminologie suivra celle de Ducros dans \autocite{ducros2017families}, et nous dirons qu'un morphisme d'espaces $k$-analytiques $f:Y\to X$ est quasi-fini en $y\in Y$ s'il est de dimension relative nulle en $y$, c'est à dire si la dimension $\textrm{dim}_y f^{-1}(f(y))$ est nulle. Le morphisme $f$ est alors fini en $y$ si et seulement si il est quasi-fini en $y$ et sans bord en $y$.

On dit que le morphisme $f$ est quasi-fini s'il est quasi-fini en tout point et topologiquement propre.

\subsubsection{GAGA} Considérons une algèbre $k$-affinoïde $\mathcal{A}$. Alors on dispose d'un foncteur d'analytification relative, qui à un $\mathcal{A}$-schéma localement de type fini $\mathcal{X}$ associe un bon espace $\mathcal{A}$-analytique $X=\mathcal{X}^{\mathrm{an}}$ ainsi qu'un morphisme d'espaces localement annelés $X\to \mathcal{X}$ qui est un objet final dans la catégorie des bons espaces $\mathcal{A}$-analytiques munis d'un $\Spec \mathcal{A}$-morphisme d'espaces localement annelés vers $\mathcal{X}$. Pour la construction de $\mathcal{X}^\textrm{an}$ et ses principales propriétés, on renvoie à \autocite{berkovich1993etale} et à l'annexe A de \autocite{Poineau_2010}. La flèche canonique $\mathcal{X}^\textrm{an}\to \mathcal{X}$ est plate et surjective en tant que morphisme d'espaces localement annelés. Cette construction est fonctorielle et commute au changement de base affinoïdes : si $Z\to \mathcal{M}(\mathcal{A})$ est un morphisme d'espaces affinoïdes, alors on a l'égalité $\mathcal{X}^{\textrm{an}}\times_\mathcal{A} Z=(\mathcal{X}\times_{\mathcal{A}}\mathcal{O} (Z))^{\textrm{an}}$.
Si $\mathcal{F}$ est un faisceau cohérent sur $\mathcal{X}$, alors son tiré en arrière sur $\mathcal{X}^{\mathrm{an}}$ est un faisceau cohérent au dessus de $X$, que l'on note $\mathcal{F}^{\mathrm{an}}$. 

Si $\mathcal{X}$ est propre, comme dans le cas complexe, on dispose de théorèmes de type GAGA : le foncteur $\mathcal{F}\mapsto \mathcal{F}^{\textrm{an}}$ induit une équivalence de catégories entre la catégorie des faisceaux cohérents au dessus de $\mathcal{X}$ et la catégorie des faisceaux cohérents au dessus de $\mathcal{X}^{\textrm{an}}$, et cette équivalence de catégories respecte la cohomologie cohérente sur les deux espaces. En particulier, $\mathcal{Z}\mapsto \mathcal{Z}^{\textrm{an}}$ induit une bijection entre l'ensemble des parties Zariski-fermées de $\mathcal{X}$ et l'ensemble des parties Zariski-fermées de $\mathcal{X}^{\textrm{an}}$. En corollaire, on voit que si l'on restreint le foncteur d'analytification aux schémas propres, on obtient un foncteur pleinement fidèle de la catégorie des $\mathcal{A}$-schémas localement de type finis et propres vers la catégorie des espaces $\mathcal{A}$-analytiques.

\subsubsection{Définition de la platitude} Nous rappelons ici la définition de la platitude de \autocite{ducros2017families}. Con\-si\-dé\-rons $f:Y\to X$ un morphisme d'espaces $k$-analytiques, $y$ un point de $Y$, $x$ son image dans $X$ et $\mathcal{F}$ un faisceau cohérent sur $Y$. 

Si $X$ et $Y$ sont des bons espaces, le faiseau $\mathcal{F}$ est dit $\textit{naïvement plat}$ en $y$ si $\mathcal{F}_y$ est un $\mathcal{O}_{X,x}$-module plat. Le problème de cette notion est qu'elle n'est pas stable par  changement de base (cf section 4.4 de \autocite{ducros2017families}), Ducros définit donc la platitude d'abord sur les bons espaces en $\textit{forçant}$ la stabilité par changement de base et extension des scalaires, et montre ensuite que la notion ainsi définie est la bonne notion de platitude.

Ainsi, si les espaces $X$ et $Y$ sont bons, alors $\mathcal{F}$ est dit $X$-plat en $y$ si pour tout bon espace analytique $X'$ définit sur une extension complète de $k$, tout $k$-morphisme $X'\to X$, et tout antécédent $y'$ de $y$ sur $Y':=Y\times_X X'$, le faisceau cohérent $\mathcal{F}_{Y'}$ tiré en arrière de $\mathcal{F}$ sur $Y'$ est naïvement $X'$-plat en $y'$. 

Dans le cas général où les espaces en jeu ne sont plus supposés bons, alors le faisceau $\mathcal{F}$ est dit $X$-plat en $y$ si pour tout bon domaine analytique $U$ de $X$ contenant $x$ et tout bon domaine analytique $V$ de $Y\times_X U$ contenant $y$, le faisceau cohérent $\mathcal{F}_V$ tiré en arrière de $\mathcal{F}$ à $V$ est {$U\mathrm{-plat}$~;~}et il suffit en fait de le vérifier pour un tel couple $(U,V)$ donné en particulier. Si $\mathcal{F}=\mathcal{O}_Y$, on dit que c'est le morphisme $f$ qui est plat en $y$. Le morphisme $f$ est dit plat s'il est plat en tout point de $Y$, et fidèlement plat s'il est plat et surjectif.
\subsubsection{Platitude analytique et schématique} Considérons $f:\mathcal{M(B)}\to\mathcal{M(A)}$ un morphisme fidèlement plat d'espaces $k$-affinoïdes. Alors le morphisme induit $\Spec \mathcal{B}\to \Spec \mathcal{A}$ est fidèlement plat (c'est le lemme 4.2.1 de \autocite{ducros2017families} voir la section 4.2 de \autocite{ducros2017families} pour plus de résultats de ce type).
\subsubsection{Platitude et finitude} Condidérons $f:Y\to X$ un morphisme fini entre bons espaces $k$-analytiques et $\mathcal{F}$ un faisceau cohérent sur $Y$. Alors par la proposition 4.3.1 de \autocite{ducros2017families}, $\mathcal{F}$ est plat en un point si et seulement s'il est naïvement plat, et si $\mathcal{F}$ est plat en tout point alors pour tout $x\in X$, il existe un voisinage affinoïde $V$ de $x$ dans $X$ tel que $f_*(\mathcal{F}_{f^{-1}(V)})$ soit un $\mathcal{O}_V$-module libre.

\subsubsection{Image d'un espace compact par un morphisme plat} Si $X$ et $Y$ sont des espaces analytiques compacts et si $\mathcal{F}$ est $X$-plat, l'image $f(\textrm{Supp}(\mathcal{F}))$ est un domaine analytique compact par le théorème 9.2.1 de \autocite{ducros2017families}. Le cas strict avec $\mathcal{F}=\mathcal{O}_Y$ était déjà connu, et dû à Raynaud (cf corollaire 5.11 de \autocite{bosch1993formal}). 
\subsubsection{Le théorème de multisection} On cite ici une conséquence importante de l'énoncé de 9.1.3 de \autocite{ducros2017families} que nous utiliserons de manière cruciale à de nombreuse reprise dans le texte. On suppose uniquement dans ce paragraphe que le corps $k$ est non trivialement valué $\emph{i.e}$ que $\vert k^* \vert \neq \{1\}$. Considérons $Y$ un espace strictement $k$-analytique quasi-compact et $X$ un espace $k$-analytique séparé. Soit $\varphi:Y\to X$ un morphisme plat d'espaces $k$-analytiques. Alors il existe un espace strictement $k$-affinoïde $X'$, un $X$-morphisme d'espaces analytiques $\sigma:X'\to Y$  et un morphisme quasi-fini et plat d'espaces analytiques $\psi:X'\to X$ tel que l'on ait l'égalité ensembliste $\varphi(X)=\psi(X')$. De plus, si $\varphi$ est quasi-lisse, alors $\psi$ peut-être choisie quasi-étale.

\subsubsection{Constructibilité} Considérons un espace $k$-analytique $X$. On dit qu'une partie $E\subset X$ est constructible si elle s'écrit comme une union finie $E=\cup_i (U_i\cap F_i)$ avec $U_i$ (resp $F_i$) une partie Zariski-ouverte (resp. Zariski fermée) de $X$ pour tout $i\in I$.

Une partie $E$ est localement constructible (resp. $G$-localement constructible) s'il existe un recouvrement par des ouverts (resp. un $G$-recouvrement) $(X_i)_{i\in I}$ de $X$ tel que $E\cap X_i$ soit une partie constructible de $X_i$ pour tout $i\in I$. Ces deux notions coïncident en fait par la proposition 10.1.12 de \autocite{ducros2017families}, et dans le cas où l'espace analytique $X$ est de dimension finie, une partie est constructible si et seulement si elle est $G$-localement constructible si et seulement si elle est localement constructible.

Si l'on se donne  une algèbre $k$-affinoïde $\mathcal{A}$, un $\mathcal{A}$-schéma localement de type fini $\mathcal{X}$, et $E$ une partie constructible (resp. localement constructible) de $\mathcal{X}$, alors la préimage $E^\textrm{an}$ de $E$ sur $X^\textrm{an}$ est une partie constructible (resp. localement constructible) de $X^\textrm{an}$. Si $\mathcal{X}$ est propre au dessus de $\mathcal{A}$, il résulte de GAGA que $E\mapsto E^\textrm{an}$ est une bijection entre les parties constructibles de $X$ et les parties constructibles de $X^\textrm{an}$.

Par le corollaire 10.1.11 de \autocite{ducros2017families}, si une partie $E$ est $G$-localement constructible, alors son adhérence de Zariski coïncide avec son adhérence pour la topologie usuelle, donc une partie $G$-localement constructible est fermée (resp. ouverte) si et seulement si elle est fermée de Zariski (resp. ouverte de Zariski).

\section{Un premier théorème champêtre}

On utilisera librement dans tout cet article les notions de l'exposé \autocite{vistoli2005notes}. 
On considère $\mathsf{C}
$ la catégorie des espaces $k$-affinoïdes, et ${\Psi}$ un pseudo-foncteur, ou lax 2-foncteur sur $\mathsf{C}$. Autrement dit, on dispose pour tout objet $U$ de $\mathsf{C}$ d'une catégorie $\Psi U$, et pour tout morphisme $f:U\to V$ d'un foncteur tiré en arrière $f^*:\Psi V\to \Psi U$ satisfaisant certaines compatibilités à la composition que nous n'énoncerons pas ici.

Considérons $p:S'\to S$ un morphisme d'espaces $k$-affinoïdes. On utilisera dans la suite de ce texte les notations suivantes :  
$$S''=S'\times_S S',~~~~ S'''=S'\times_S S'\times_S S'$$et on notera : $$p_i:S''\to S',~i\in \{1,2\}~~~~~~p_{ij}:S'''\to S''~i,j\in\{1,2,3\},i<j$$ les projections sur le $i$-ème facteur et le facteur $(i,j)$ respectivement.
On a aussi des projections $q_i:S'''\to S'$ avec $q_i=p_1\circ p_{ij}$, et $q_j=p_2\circ p_{ij}$.

\setcounter{subsection}{1}

\begin{defi} Un objet avec données de descente relativement à $p$ est un élément $\xi'$ de $\Psi S'$ muni d'un isomorphisme $\varphi:p_1^*\xi'\to p_2^*\xi' $ vérifiant la condition de cocycle $p_{13}^*\varphi=p_{12}^*\varphi \circ p_{23}^*\varphi$. Un morphisme de données de descente de $\xi'$ vers $\eta'$ est un morphisme dans la catégorie $\Psi S'$ qui commute aux projections.
\end{defi}

On dispose donc de la catégories des objets de $\Psi S'$ munis de données de descente relativement à $p$, que l'on note $\Psi(S'\to S)$, et on a aussi un foncteur de $\Psi S$ vers $\Psi (S'\to S)$ qui envoie un objet $\xi\in \Psi S$ sur $p^*\xi$. On dit que le morphisme $p$ est un morphisme de descente relativement à $\Psi$ si ce foncteur est pleinement fidèle, et que c'est un morphisme de descente effective relativement à $\Psi$ si ce foncteur est une équivalence de catégories. Lorsque la situation est sans ambiguïté, on s'autorisera à parler de morphisme de descente (resp. de descente effective) sans faire référence au pseudo-foncteur que l'on considère. Un morphisme sera de descente universellement s'il est de descente et que n'importe quel changement de base de ce morphisme est un morphisme de descente.

\begin{exem}
On dispose d'un pseudo-foncteur qui à un espace $k$-affinoide $S\in \mathsf{C}$ associe la catégorie des modules cohérents au dessus de $S$. Le tiré en arrière est alors simplement le tiré en arrière des modules usuel. Ducros a démontré dans \autocite{ducros2020devisser} que les morphismes plats, surjectifs et topologiquement propres sont des morphismes de descentes effectifs pour ce pseudo-foncteur. Le résultat était déjà connu en géométrie rigide, et dû à Gabber sous des hypothèses restrictives, puis à Bosch et Görtz dans le cas général dans le texte \autocite{Coherentmodulesandtheirdescentonrelativerigidspaces}.
\end{exem}

\begin{exem} Considérons $\mathsf{D}$ une catégorie qui possède les produits fibrés. Dans une telle catégorie $\mathsf{D}$, un morphisme $p:S'\to S$ est un épimorphisme effectif si pour tout objet $X\in \mathsf{D}$, le diagramme d'ensemble suivant est exact : $X(S)\to X(S')\rightrightarrows X(S'\times_S S')$. Dans une telle catégorie, on dispose aussi du pseudo-foncteur $\Psi$ au dessus de $\mathsf{D}$ qui à un objet $S\in \mathsf{D}$ associe la catégorie des $S$-objets de $\mathsf{D}$. Le tiré en arrière est alors simplement donné par le produit fibré de la catégorie $\mathsf{D}$.

On rappelle qu'un morphisme $p:S'\to S$ dans $\mathsf{D}$ est un morphisme de descente pour le pseudo-foncteur $\Psi$ si et seulement si c'est un épimorphisme effectif universel dans $\mathsf{D}$ c'est à dire un épimorphisme effectif après tout changement de base dans $\mathsf{D}$ par l'exposé 4 de \autocite{grothendieck1963schemas}.
\end{exem}

\subsection{La descente effective}

Le résultat phare de cette section est le résultat suivant. Il donne une condition pour que les morphismes plats et surjectifs soient de descente effectif pour un pseudo-foncteur donné au dessus de la catégorie des espaces $k$-affinoïdes. Il permet donc de décider si un pseudo-foncteur en particulier est un champ pour la topologie de Grothendieck dont les flèches couvrantes sont les flèches plates et surjectives.

\begin{theo} \label{fibfi} Considérons $\Psi$ un pseudo-foncteur au dessus de $\mathsf{C}$, avec $\mathsf{C}$ la catégorie des espaces analytiques $k$-affinoïdes. On suppose que :
\begin{enumerate}
\item Pour toute algèbre $k$-affinoïde $\mathcal{A}$, et tout polyrayon $k$-libre $r\in (\mathbb{R}_+ ^*)^n$, le morphisme $\mathcal{M}(\mathcal{A}_r)\to \mathcal{M(A)}$ est un morphisme de descente effective relativement à $\Psi$.
\item Tout morphisme plat, fini et surjectif $\mathcal{M(B)}\to \mathcal{M(A)}$ est un morphisme de descente effective relativement à $\Psi$.
\item Tout $G$-recouvrement fini $\amalg_{i=1}^n S_i\to S$ de $S$ par des domaines affinoïdes en nombre fini est un morphisme de descente effective relativement à $\Psi$.

\end{enumerate}

Alors tout morphisme fidèlement plat entre espaces affinoïdes est un morphisme de descente effective relativement à $\Psi$.
\end{theo}

La technique de démonstration du théorème précédent est d'utiliser les deux premiers points du lemme suivant à répétition en suivant la stratégie de démonstration de la preuve du théorème 3.3 de \autocite{ducros2020devisser}.

\begin{lemm}\label{my} Considérons $\Psi$ un pseudo-foncteur au dessus d'une catégorie possédant des produits fibrés ($\emph{e.g}$ la catégorie des espaces affinoïdes). Considérons $R\stackrel{v}{\to} S\stackrel{u}{\to} T$ des morphismes d'une catégorie $\mathsf{C}$. Alors : 
\begin{enumerate}
\item Supposons que $u$ et $v$ sont des morphismes de descente effective, et que les morphismes canoniques $m:R\times_T R\to S\times_T S$ et $m':R\times_T R\times_T R\to S\times_T S \times_T S$ sont des morphismes de descente. Alors le morphisme $u\circ v$ est de descente effective.
\item Supposons que $v$ est de descente effective et que $u\circ v$ est de descente effective. Alors $u$ est de descente effective.
\end{enumerate}
\end{lemm}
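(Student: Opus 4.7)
The plan for part (1) is to descend an object $(\xi, \varphi) \in \Psi(R \to T)$ in two successive stages: first along $v$ to produce some $\eta \in \Psi S$, then along $u$ to produce the desired $\zeta \in \Psi T$. The first stage is routine: the morphism $R \times_S R \to R \times_T R$ pulls $\varphi$ back to a descent datum $\psi$ for $\xi$ relative to $v$, with cocycle condition inherited by pullback, and effective descent of $v$ produces $\eta \in \Psi S$ with $v^* \eta \cong \xi$ respecting the descent data. The second stage is the delicate one: one needs a descent datum $\tilde\varphi : r_1^* \eta \to r_2^* \eta$ over $S \times_T S$. Under the identification $m^* r_i^* \eta \cong p_i^* \xi$, the isomorphism $\varphi$ is the natural candidate for $m^* \tilde\varphi$; since $m$ is of descent, $m^*$ is fully faithful, so such a $\tilde\varphi$ exists provided $\varphi$ satisfies the descent condition for $m$ on $(R \times_T R) \times_{S \times_T S} (R \times_T R)$, which one checks from the cocycle identity for $\varphi$ restricted to this subspace. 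The cocycle condition for $\tilde\varphi$ over $S \times_T S \times_T S$ is then verified after pullback via $m'$ (faithful since $m'$ is of descent), where it reduces to the cocycle condition on $\varphi$. Effective descent of $u$ finally descends $(\eta, \tilde\varphi)$ to $\zeta \in \Psi T$. Full faithfulness of $(uv)^*$ follows symmetrically by factoring $\Hom_{\Psi T}(\zeta_1, \zeta_2) \xrightarrow{u^*} \Hom_{\Psi(S \to T)} \xrightarrow{v^*} \Hom_{\Psi(R \to T)}$ and using that $u^*$ and $v^*$ are fully faithful and that $m^*$ is faithful.

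For part (2), given $(\eta, \tilde\varphi) \in \Psi(S \to T)$, I pull back along $v$ to form $(v^* \eta, m^* \tilde\varphi) \in \Psi(R \to T)$ (the cocycle condition transferring by naturality) and invoke effective descent of $u \circ v$ to obtain $\zeta \in \Psi T$ with $(uv)^* \zeta \cong v^* \eta$ compatibly with descent data over $R \times_T R$. It remains to identify $u^* \zeta$ with $\eta$ in $\Psi S$, with matching descent data over $S \times_T S$. The key observation is that the composite $R \times_S R \to R \times_T R \to S \times_T S$ factors through the diagonal $\Delta : S \hookrightarrow S \times_T S$, and that $\Delta^* \tilde\varphi = \mathrm{id}_\eta$ (by the cocycle identity applied at the triple diagonal, where an iso with $\varepsilon = \varepsilon^2$ must be the identity). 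Consequently, the pullback of $m^* \tilde\varphi$ (and similarly of the canonical datum on $(uv)^* \zeta$) to $R \times_S R$ coincides with the canonical descent datum for $v$. The iso $v^* u^* \zeta \cong v^* \eta$ thus respects canonical descent data over $R \times_S R$, and full faithfulness of $v^*$ descends it to the sought iso $u^* \zeta \cong \eta$ in $\Psi S$. Compatibility with descent data over $S \times_T S$ follows by pulling back to $R \times_T R$ and using the original compatibility there.

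The main obstacle is the precise bookkeeping in part (1): one must verify that $\varphi$, viewed as a morphism in $\Psi(R \times_T R)$ between pullbacks of objects from $S \times_T S$, satisfies the descent condition for $m$, and that the resulting $\tilde\varphi$ satisfies its own cocycle identity over the triple fibered product. Both reductions proceed from the cocycle condition for $\varphi$ on $R \times_T R \times_T R$, but they require careful tracking of the canonical isomorphisms between iterated pullbacks. This is precisely where the two auxiliary hypotheses on $m$ and $m'$ become essential, distinguishing the proof from the classical scheme-theoretic descent of SGA~1, exposé~VIII.
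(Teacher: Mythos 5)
Your treatment of part (1) is correct and follows the paper's own proof essentially step for step: descend first along $v$; use the hypothesis that $m$ is a descent morphism to produce the intermediate descent datum on the descended object over $S\times_T S$ (existence by fullness, after checking the descent condition on $(R\times_T R)\times_{S\times_T S}(R\times_T R)$ from the cocycle identity), check its own cocycle condition via the faithfulness of $m'^*$, and finally descend along $u$. This is exactly the argument of the paper (which only writes out part (1) and refers to Giraud for the rest).

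Part (2) contains a genuine gap at the last step. Having descended $\alpha:(uv)^*\zeta\to v^*\eta$ to an isomorphism $\beta:u^*\zeta\to\eta$ in $\Psi S$, you must verify the identity $\tilde\varphi\circ p_1^*\beta=p_2^*\beta\circ\mathrm{can}$ in $\Psi(S\times_T S)$, and you assert that it \emph{follows by pulling back to $R\times_T R$ and using the original compatibility there}. That deduction requires the pullback functor $m^*:\Psi(S\times_T S)\to\Psi(R\times_T R)$, $m=v\times_T v$, to be faithful. In part (1) this is precisely what the hypothesis that $m$ is of descent supplies; in part (2) no such hypothesis is stated, and it does not follow from $v$ being of effective descent, which only gives faithfulness of $v^*:\Psi S\to\Psi R$, not of its fibered square over $T$. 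Abstractly, writing $F:\Psi T\to \Psi(S\to T)$ and $G:\Psi(S\to T)\to\Psi(R\to T)$ for the two comparison functors, your hypotheses give that $G\circ F$ is an equivalence and that $G$ is faithful; this yields full faithfulness of $F$ (so $u$ is a descent morphism), but essential surjectivity of $F$ requires lifting the isomorphism $GF\zeta\cong G(\eta,\tilde\varphi)$ through $G$, i.e.\ some fullness of $G$, and that is exactly where the faithfulness of $m^*$ is consumed. You need either to add this faithfulness as a hypothesis, or to check it separately in each situation where part (2) is invoked; as written, the step does not go through.
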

\begin{proof}[Preuve]
On peut démontrer ces points de la même manière que les proposition 10.10 et 10.11 de \autocite{girauddescente1964}. On va par exemple démontrer le premier point. Supposons donc que $u$, $v$ sont des morphismes de descente effective et que $m$ et $m'$ sont des morphismes de descente. Notons $p_i:S\times_T S\to S$, $q_i:R\times_T R\to R$, et $r_i:R\times_S R\to R$ les projections canoniques pour $i\in\{1,2\}$, et notons $l:R\times_S R\to R\times_T R$ le morphisme canonique. La situation est résumée par le diagramme suivant : 
$$
\xymatrix{ 
    R\times_S R \ar@<-.5ex>[rd] \ar@<.5ex>[rd] \ar[r]^l & R\times_T R \ar@<-.5ex>[d] \ar@<.5ex>[d] \ar[r]^m & S\times_T S  \ar@<-.5ex>[d] \ar@<.5ex>[d] &     \\
    & R \ar[r]^v & S   \ar[r]^u & T 
  } $$

On veut montrer que $u\circ v$ est un morphisme de descente effective. On se donne donc une donnée de descente relativement au morphisme $u\circ v:R\to T$, c'est à dire un élément $\xi''\in \Psi R$ muni d'un isomorphisme $\varphi':q_1^*\xi''\to q_2^*\xi''$ vérifiant la condition de cocycle usuelle. On tire la donnée de descente $\varphi'$ en arrière par le morphisme $l$, ce qui nous fournit une donnée de descente $l^*\varphi':r_1^*\xi''\to r_2^*\xi''$ sur $\xi''$ relativement au morphisme $v$. Puisque $v$ est de descente effective, on en déduit qu'il existe un objet $\xi'$ dans $\Psi S$ muni d'un isomorphisme de données de descente $\lambda':v^*\xi'\to \xi''$ relativement à $v$, c'est à dire que $\lambda'$ vérifie la relation $l^*\varphi'\circ r_1^* \lambda'=r_2^* \lambda'$.

Maintenant, va équiper $\xi'$ d'une donnée de descente relativement à $u$, c'est à dire d'un isomorphisme $\varphi:p_1^*\xi'\to p_2^*\xi'$ vérifiant la condition de cocyle, et qui vérifie de plus la relation $q_2^*\lambda'\circ m^*\varphi=\varphi'\circ q_1^*\lambda'$. Pour l'existence de $\varphi$, on utilise l'hypothèse que $m$ est de descente : il suffit de vérifier que si l'on note $m_1$ et $m_2$ les deux projections canoniques de $(R\times_T R )\times_{S\times_T S} (R\times_T R) \to R\times_T R$, alors le morphisme $z:=q_2^*\lambda'^{-1}\circ \varphi'\circ q_1^*\lambda'$ vérifie la relation $m_1^*z=m_2^*z$. On vérifie aussi, en utilisant l'hypothèse sur $m'$ que $\varphi$ vérifie bien la relation de cocycle. Maintenant, puisque $u$ est de descente effective, on en déduit l'existence de $\xi\in \Psi T$ muni d'un isomorphisme $\lambda:u^*\xi\to \xi'$ de données de descente relativement à $u$, c'est à dire que $\lambda$ vérifie la relation $p_2^*\lambda=\varphi\circ p_1^*\lambda$. Si l'on tire cette relation en arrière par le morphisme $m$, on obtient la relation $q_2^*v^*\lambda=m^*\varphi \circ q_1^* v^*\lambda$, et par la relation vérifiée par $\varphi$, on en déduit $q_2^* v^* \lambda= q_2^*\lambda'^{-1}\circ \varphi'\circ q_1^*\lambda' \circ q_1^*v^*\lambda $, c'est à dire en réarrangeant les termes que $\varphi'\circ q_1^*(\lambda'\circ v^*\lambda)=q_2^*(\lambda'\circ v^*\lambda)$, donc $\lambda'\circ v^*\lambda$ définit bien un isomorphisme de données de descentes de $(u\circ v)^*\xi$ muni de ses données de descentes canoniques vers $\xi''$ muni de ses données de descente induites par $\varphi'$, et le morphisme $u\circ v$ est bien un morphisme de descente effective. \qedhere

\end{proof}
Dans la suite de cette section, on va démontrer le théorème. On se donne donc la catégorie $\mathsf{C}$ des espaces $k$-affinoïdes, un pseudo-foncteur $\Psi$ au dessus de $\mathsf{C}$ et un morphisme fidèlement plat $p:S'\to S$ dans la catégorie $\mathsf{C}$, dont on va montrer qu'il est de descente effective.

\subsubsection{Réduction au cas strict non trivialement valué}

Considérons $r$ un polyrayon $k$-libre tel que $S'_r$ et $S_r$ soient strictement $k_r$-affinoïdes et que la valeur absolue sur $k_r$ soit non triviale. 
On a le diagramme suivant : 

\begin{equation}\label{strict} \xymatrix{
    S'_r \ar[r] \ar[d]  &  S_r \ar[d] \\
    S' \ar[r] & S
  }
  \end{equation}

Quitte à remplacer $\Psi$ par le pseudo-foncteur induit sur les espaces $k_r$-affinoïdes, et à remplacer $k$ par $k_r$, on peut supposer que $S$ et $S'$ sont strictement affinoïdes, et que le corps $k$ est non trivialement valué. En effet, si l'on démontre le théorème dans ce cas là, tout morphisme plat surjectif entre espaces strictement $k_r$-affinoïdes est de descente effectif, et c'est en particulier le cas du morphisme $S'_r\to S_r$ mais aussi des morphismes ${S'_r}\times _S {S'_r}\to {S_r}\times_S {S_r}$ et ${S'_r}\times _S {S'_r}\times_S S'_r\to S_r\times_S {S_r}\times_S {S_r}$, et puisque $S_r\to S$ est effectif par hypothèse, par le lemme $\ref{my}$, on en déduit que $S'_r\to S$ est de descente effective. 

Maintenant, puisque $S'_r\to S'$ est de descente effectif par hypothèse, il suffit d'appliquer la deuxième partie du lemme $\ref{my}$ pour conclure que le morphisme $S'\to S$ est de descente effectif. Cela nous permet donc de réduire le problème à un morphisme $S'\to S$ plat surjectif entre espaces strictement $k$-affinoïdes, avec $k$ un corps non trivialement valué.

\subsubsection{Réduction au cas quasi-fini}
Considérons donc un morphisme fidèlement plat $S'\to S$ entre espaces strictement $k$-affinoïdes. Par le théorème sur l'existence de multisection plate 9.1.3 de \autocite{ducros2017families}, il existe un espace strictement $k$-affinoïde $X$, un morphisme quasi-fini (c'est à dire topologiquement propre et de dimension relative nulle), fidèlement plat $X\to S$ et un $S$-morphisme $X\to S'$. Maintenant, le morphisme $X'=S'\times _S X\to X$ ainsi que tout changement de base de celui-ci admet alors une section, c'est donc un morphisme de descente effective , donc par le lemme $\ref{my}$, on peut se ramener à démontrer le théorème pour un morphisme $S'\to S$ quasi-fini, plat et surjectif  puisque ces propriétés sont stables par changement de base (pour quasi-fini, cela résulte de l'invariance par changement de base de la dimension, qui est établie dans \autocite{ducros_2007}).  

\subsubsection{Réduction au cas quasi-étale} \label{redquaset}

Considérons un morphisme plat, surjectif, quasi-fini $S'\to S$ entre espaces affinoïdes. Alors, comme le morphisme est de dimension relative nulle et plat, par la remarque 8.4.3 de \autocite{ducros2017families}, $S'$ est de Cohen-Macaulay au dessus de $S$. Maintenant, on applique le théorème 8.4.6 de \autocite{ducros2017families} : pour tout $s'\in S'$, il existe un domaine affinoïde $V_i$ de $S'$ ($i$ dépend de $s'$) qui est un voisinage de $s'$, un espace affinoïde $W_i$, un $S$-espace $T_i$, étale au dessus de $S$, un morphismes $V_i\to W_i$ plat, fini, un morphisme $W_i\to T_i$ qui identie $W_i$ à un domaine affinoïde de $T_i$ et tels que la composée $V_i\to W_i\to T_i\to S$ soit aussi la restriction de $f$ à $V_i$. De plus, par la proposition 4.3.1 de \autocite{ducros2017families}, il existe un voisinage affinoïde $V_i'$ de $s'$ dans $V_i$ et un voisinage affinoïde $W_i'$ de l'image de $s'$ dans $W_i$ tels que la flèche $V_i'\to W_i$ se factorise en $\pi:V_i'\to W_i'$ finie et tels que $\pi_*(O_{V_i '})$ est un $\mathcal{O}_{W_i '}$ module libre. La flèche $W_i'\to S$ reste quasi-étale, et si l'on note $\mathcal{F}$ (resp $\mathcal{E}$) l'algèbre $k$-affinoïde associée à ${V}_i'$ (resp. ${W}_i'$), la flèche $\mathcal{E}\to \mathcal{F}$ est fidèlement plate, ($\mathcal{E}$ est un $\mathcal{F}$-module libre de rang $r>0$) donc elle est aussi injective, et la flèche $V_i'\to W_i'$ est bien surjective par le corollaire 2.1.16 de \autocite{berkovich2012spectral}.

Ainsi, on dispose d'une factorisation locale de $f$ en $V_i'\to W_i'$ finie, plate et surjective suivie de $W_i'\to S'$ quasi-étale. On obtient donc le diagramme commutatif suivant : 

$$\xymatrix{
    \amalg V_i' \ar[r] \ar[d]  & \amalg W_i' \ar[d] \\
    S' \ar[r] & S
  }$$

La flèche $\amalg V_i'\to S'$ est un $G$-recouvrement par des affinoïdes qui peut être choisi fini, donc c'est un morphisme de descente effective . Par le lemme $\ref{my}$, pour démontrer que $S'\to S$ est de descente effective, il suffit de le faire pour le morphisme $\amalg V_i'\to S$, mais celui-ci se décompose en $\amalg V_i'\to \amalg W_i'$ qui est fini, plat et surjectif donc effectif et de descente universellement, suivi de $\amalg W_i'\to S$ qui est quasi-étale fidèlement plat. Par le lemme $\ref{my}$, il suffit donc de démontrer le théorème pour un morphisme $p:S'\to S$ quasi-étale fidèlement plat.

\subsubsection{La réduction au cas d'une flèche plus simple} \label{redplusimple}

Considérons donc un morphisme $p:Y\to X$ quasi-étale surjectif entre espaces affinoïdes. On va montrer que le morphisme $p$ est de descente effective. Soit $x\in X$, notons $y_1,..,y_r\in Y$ les antécédents de $x$ qui sont en nombre fini. Par définition d'un morphisme quasi-étale, pour tout $1\leqslant i\leqslant r$, il existe un voisinage affinoïde $Y_i'$ de $y_i$ qui est $X$-isomorphe à un domaine affinoïde d'un $X$-espace étale $Z_i'$. Maintenant, par le caractère étale du morphisme $Z'_i\to X$, il existe $Z_i''$ un ouvert de $Z_i'$ contenant l'image de $y$ et un voisinage $X_i''$ de $x$ dans $X$ tel que la restriction de la flèche structurale de $Z_i'$ à $Z_i''$ induise une flèche $Z_i''\to X_i''$ finie étale. Maintenant, par la proposition 4.3.1 de \autocite{ducros2017families}, il existe un voisinage affinoïde $Z_i$ de l'image de $y$ dans $Z_i''$, et un voisinage affinoïde $X_i$ de $x$ dans $X_i''$ tels que $Z_i''\to X_i''$ se factorise en $\pi:Z_i\to X_i$ fini étale et vérifiant que $\pi_*(\mathcal{O}_{Z_i})$ est un $\mathcal O _{X_i}$-module libre. Alors si l'on note $\mathcal{E}$ (resp $\mathcal{F}$) l'algèbre des fonctions sur $X_i$ (resp $Z_i$), alors la flèche $\mathcal{E}\to \mathcal{F}$ est fidèlement plate, donc injective, et alors $Z_i\to X_i$ est surjective par 2.1.16 de \autocite{berkovich2012spectral}. 

Maintenant, $Y_i:=Z_i\times_{Z_i'}Y_i'\to Y_i'$ est une inclusion de domaine analytique au voisinage de $y$, tout comme $Y_i\to Z_i$. Ainsi, $Y_i\to Y$ est une inclusion de domaine analytique, tout comme $Y_i\to Z_i$, et $Z_i\to X$ se décompose en $Z_i\to X_i$ étale, finie et surjective, et $X_i\to X$ qui est une inclusion de voisinage affinoïde de $x$.

La situation est résumée par le diagramme suivant : 

$$\label{diag2} 
\xymatrix{ 
    Y_i\ar[d] \ar[rr] &  &  Z_i \ar[d] &  \\
    Y \ar[r] & X  & \ar[l] X_i & 
  } $$

La partie $\cup_{i\in I} Y_i$ est un voisinage de la fibre $p^{-1}(x)$ donc puisque $p$ est fermée, il existe $V_x$ un voisinage affinoïde de $x$ dans $X$ contenu dans chaque $X_i$ tel que $p^{-1}(V)\subset \cup_{i\in I} Y_i$ (en effet, si $W$ est un voisinage ouvert de $p^{-1}(x)$ inclus dans $\cup_{i\in I}Y_i$, on prend un voisinage affinoïde $V$ de $x$ inclus dans l'ouvert $V':=X\setminus{p(Y\setminus{W})}$). Maintenant, par compacité de $X$, il existe un nombre fini de $V_x$ qui recouvrent $X$, et en appliquant le lemme $\ref{my}$ aux composée $\amalg p^{-1}(V_x)\to \amalg V_x\to X$ et $\amalg p^{-1}(V_x)\to Y\to X$ on voit qu'il suffit de démontrer que la flèche $p^{-1}(V_x)\to V_x$ reste de descente effective universellement pour chaque $x\in X$ pour montrer que $p:Y\to X$ est de descente effective.

\subsubsection{Fin de la démonstration} \label{redgtop}
On fixe donc $x\in X$ et on note $V=V_x$ le voisinage affinoïde associé du paragraphe précédent. Pour montrer que $p^{-1}(V)\to V$ est de descente effective, on va appliquer le lemme $\ref{my}$ à certaines factorisations qui sont stables par changement de base, donc cela montrera aussi que $p^{-1}(V)\to V$ est de descente effective universellement.
Maintenant, par changement de base, pour chaque $1\leqslant i \leqslant r$, le diagramme $\ref{diag2}$ fournit le diagramme suivant : 

\[ \xymatrix{ 
    Y_i\times_X V \ar[d] \ar[r	] &    Z_i\times_X V \ar[d] &  \\
    Y\times_X V \ar[r] & V  &   
  } \]
et puisque chaque $Z_i\times_X V\to V$ est fini étale, il existe un revêtement fini étale galoisien $T\to V$ de l'espace $V$ qui domine tous ces revêtements finis étales. On a le diagramme suivant : 
\[ \xymatrix{ 
    \amalg_{i\in I} Y_i\times_X V \ar[d] \ar[r	] &    \amalg_{i\in I} Z_i\times_X V \ar[d] &  \\
    Y\times_X V \ar[r] & V  &   
  } \]

Puisque $T\to V$ est fini étale surjectif, il est de descente effectif universellement par hypothèse, et par le lemme $\ref{my}$, il suffit de démontrer que la flèche $Y\times_X V\times_V T\to T$ est universellement de descente effective pour montrer que la flèche $Y\times_X V\to V$ est de descente effective. On effectue donc le changement de base $T\to V$ au diagramme précédent pour obtenir : 

\[ \xymatrix{ 
    \amalg_{i\in I} Y_i\times_X V\times_V T \ar[d] \ar[r	] &    \amalg_{i\in I} Z_i\times_X V\times_V T \ar[d] &  \\
    Y\times_X V\times_V T \ar[r] & T  &   
  } \]
  
Et la flèche $\amalg_{i\in I} Y_i\times_X V\times_V T\to  Y\times_X V\times_V T$ est un $G$-recouvrement surjectif (car $\amalg Y_i\times_X V\to Y\times_X V$ l'est par définition de $V$), tout comme $\amalg_{i\in I} Y_i\times_X V\times_V T\to T$ (puisque $Y_i\to Z_i$ est une inclusion de domaine analytique, et que $Z_i\times_X V \times T\to T$ est de la forme $\amalg_{i\in H_i}T\to T$ pour un certain ensemble fini $H_i$ par définition d'un revêtement galoisien). Par une dernière application du lemme $\ref{my}$, on voit que $Y\times_X T\to T$ est de descente effective, ce qui démontre que $Y\times_X V \to V$ l'est, et comme cela vaut sur un $G$-recouvrement fini par des domaines affinoïdes $V$ de $X$, cela le démontre pour $Y\to X$ quasi-étale surjective, et le théorème $\ref{fibfi}$ est démontré. 

\subsection{La pleine fidélité}

On a un analogue du lemme $\ref{my}$ pour les morphismes de descente, qui est démontré dans \autocite{girauddescente1964}, à la proposition 10.10 et 10.11.

\begin{lemm}\label{mybis} Considérons $\Psi$ un pseudo-foncteur au dessus au dessus d'une catégorie possédant des produits fibrés ($\emph{e.g}$ la catégorie des espaces affinoïdes). Considérons $R\stackrel{v}{\to} S\stackrel{u}{\to} T$ des morphismes dans $\mathsf{C}$. Alors : 
\begin{enumerate}
\item Supposons que $u$ et $v$ sont des morphismes de descente, et que le morphisme canonique $k:R\times_T R \to S\times_T S$ induit un foncteur fidèle $\Psi(S\times_T S) \to \Psi ( R\times_T R)$. Alors le morphisme $u\circ v$ est un morphisme de descente.
\item Supposons que $u\circ v$ est un morphisme de descente et que le morphisme $v$ induit un foncteur fidèle $\Psi S\to \Psi R$. Alors $u$ est un morphisme de descente.
\end{enumerate}
\end{lemm}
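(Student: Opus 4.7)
On rappelle qu'un morphisme $p:S'\to S$ est de descente pour $\Psi$ si le foncteur tiré en arrière $\Psi S\to \Psi(S'\to S)$ est pleinement fidèle, ce qui revient à dire que pour tout $\xi,\eta\in \Psi S$ et tout morphisme $\alpha:p^*\xi\to p^*\eta$ compatible avec les données de descentes canoniques (c'est à dire vérifiant $p_2^*\alpha=p_1^*\alpha$ modulo l'isomorphisme canonique), il existe un unique $\gamma:\xi\to \eta$ dans $\Psi S$ avec $p^*\gamma=\alpha$. En particulier, $p^*$ est alors fidèle. On introduit les mêmes notations $p_i,q_i,r_i,l,k$ que dans la preuve de $\ref{my}$ : on a les relations $p_i\circ k=v\circ q_i$ et $q_i\circ l=r_i$.

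Pour le premier point, soit $\xi_1,\xi_2\in \Psi T$ et $\alpha:(u\circ v)^*\xi_1\to (u\circ v)^*\xi_2$ un morphisme compatible aux données de descente relatives à $u\circ v$. Tiré en arrière le long de $l$, la condition $q_1^*\alpha=q_2^*\alpha$ (modulo isomorphismes canoniques) entraîne $r_1^*\alpha=r_2^*\alpha$, donc $\alpha$ est compatible aux données de descente canoniques sur $v^*(u^*\xi_i)$ relativement à $v$. Puisque $v$ est de descente, il existe un unique $\beta:u^*\xi_1\to u^*\xi_2$ dans $\Psi S$ avec $v^*\beta=\alpha$. Montrons que $\beta$ est compatible aux données de descente pour $u$, c'est à dire que $p_1^*\beta=p_2^*\beta$ dans $\Psi(S\times_T S)$. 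En tirant en arrière par $k$, on obtient $k^*p_i^*\beta=q_i^*v^*\beta=q_i^*\alpha$ pour $i\in\{1,2\}$ ; ces deux expressions coïncident par l'hypothèse de descente sur $\alpha$. Comme $k^*$ est fidèle par hypothèse, on en déduit $p_1^*\beta=p_2^*\beta$. Enfin, puisque $u$ est de descente, $\beta$ provient d'un unique $\gamma:\xi_1\to \xi_2$ dans $\Psi T$, et on vérifie aisément $(u\circ v)^*\gamma=\alpha$. L'unicité (fidélité de $(u\circ v)^*$) résulte de ce que $u^*$ et $v^*$ sont fidèles.

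Pour le deuxième point, soit $\xi,\eta\in \Psi T$ et $\beta:u^*\xi\to u^*\eta$ compatible aux données de descente pour $u$, c'est à dire $p_1^*\beta=p_2^*\beta$. Tirons en arrière par $k$ : puisque $v\circ q_i=p_i\circ k$, on a $q_i^*(v^*\beta)=k^*p_i^*\beta$, donc $q_1^*(v^*\beta)=q_2^*(v^*\beta)$ ; autrement dit $v^*\beta$ est compatible aux données de descente relatives à $u\circ v$. Comme $u\circ v$ est de descente, il existe un unique $\gamma:\xi\to \eta$ dans $\Psi T$ avec $(u\circ v)^*\gamma=v^*\beta$, c'est à dire $v^*(u^*\gamma)=v^*\beta$. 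La fidélité de $v^*$ fournit $u^*\gamma=\beta$. L'unicité résulte elle aussi de la fidélité de $v^*$ combinée à celle de $(u\circ v)^*$, ce qui achève la preuve.

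L'essentiel de la démonstration consiste donc, comme dans $\ref{my}$, à propager les compatibilités de descente à travers les factorisations et à utiliser la fidélité des foncteurs tirés en arrière pour transférer des égalités de morphismes entre catégories de sections : l'étape techniquement la plus délicate est la vérification de la condition $p_1^*\beta=p_2^*\beta$ dans la première partie, qui nécessite précisément l'hypothèse que $k^*$ est fidèle et illustre pourquoi cette hypothèse remplace ici, dans le cadre de la pleine fidélité seule, l'hypothèse plus forte de descente pour $k$ requise dans $\ref{my}$.
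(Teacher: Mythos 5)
Votre démonstration est correcte et suit exactement la route standard : c'est l'argument que le papier délègue à Giraud (propositions 10.10 et 10.11 de \autocite{girauddescente1964}) sans le reproduire, et qui est le décalque « pleine fidélité » de la preuve donnée pour le lemme \ref{my}. Vous identifiez en outre correctement le point où l'hypothèse de fidélité de $k^*$ suffit là où la version effective exigeait que $m$ soit de descente.
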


En redéroulant la démonstration du théorème $\ref{fibfi}$ étape par étape, on obtient le théorème suivant : 

\begin{theo} \label{fibfibis} Considérons $\Psi$ un pseudo-foncteur au dessus de $\mathsf{C}$, avec $\mathsf{C}$ la catégorie des espaces analytiques $k$-affinoïdes. On suppose que :
\begin{enumerate}
\item Pour toute algèbre $k$-affinoïde $\mathcal{A}$, et tout polyrayon $k$-libre $r\in (\mathbb{R}_+ ^*)^n$, le morphisme $\mathcal{M}(\mathcal{A}_r)\to \mathcal{M(A)}$ est un morphisme de descente relativement à $\Psi$.
\item Tout morphisme plat, fini et surjectif $\mathcal{M(B)}\to \mathcal{M(A)}$ est un morphisme de descente relativement à $\Psi$.
\item Tout $G$-recouvrement fini $\amalg_{i=1}^n S_i\to S$ de $S$ par des domaines affinoïdes en nombre fini est un morphisme de descente relativement à $\Psi$.

\end{enumerate}

Alors tout morphisme fidèlement plat entre espaces affinoïdes est un morphisme de descente relativement à $\Psi$.
\end{theo}

\section{Application à certains pseudo-foncteurs particuliers}

L'idée de cette section est d'appliquer les deux théorèmes généraux de la section précédente à des pseudo-foncteurs particuliers pour obtenir des résultats de descente.
\subsection{Équivalence dans le cas des modules cohérents, pleine fidélité pour les \texorpdfstring{$S$}   -- espaces analytiques quelconques}

Les résultat suivant sont dûs à Berkovich dans \autocite{PMIHES_1993__78__5_0} en 1.2.0. et 1.3.2. Il indiquent juste qu'on peut toujours recoller des modules cohérents ou bien des morphismes d'espaces analytiques au dessus de $G$-recouvrements finis.

\begin{prop} \label{hyp3} Considérons un $G$-recouvrement $p:\amalg_{i\in I} S_i\to S$ d'un espace affinoïde $S$ par un nombre fini de domaines affinoïdes $S_i$. 

\begin{enumerate} \label{base}
\item Soit $\Psi$ le pseudo-foncteur qui à un espace affinoïde $S$ associe la catégorie des modules cohérents au dessus de $S$. Alors $p$ est un morphisme de descente effective .
\item Soit $\Phi$ le pseudo-foncteur qui à un espace affinoïde $S$ associe la catégorie des espaces affinoïdes $X$ au dessus de $S$. Alors le morphisme $p$ est un morphisme de descente.

\end{enumerate} 
\end{prop}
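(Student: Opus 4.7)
L'idée essentielle est que pour un $G$-recouvrement fini $p:\amalg_{i\in I}S_i\to S$ par des domaines affinoïdes, les produits fibrés $S_i\times_S S_j$ (et $S_i\times_S S_j\times_S S_k$) calculés dans la catégorie des espaces $k$-affinoïdes s'identifient aux intersections $S_i\cap S_j$ (resp. $S_i\cap S_j\cap S_k$) prises en tant que domaines analytiques dans $S$. Ainsi, une donnée de descente relativement à $p$, que ce soit pour $\Psi$ ou pour $\Phi$, équivaut à la donnée d'une famille d'objets $(\xi_i)_{i\in I}$ avec $\xi_i$ un objet de $\Psi S_i$ (resp. $\Phi S_i$) et d'isomorphismes $\varphi_{ij}:\xi_i|_{S_i\cap S_j}\to\xi_j|_{S_i\cap S_j}$ vérifiant la condition de cocycle standard sur les intersections triples $S_i\cap S_j\cap S_k$. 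Le problème se ramène donc à un pur problème de recollement sur le $G$-site de $S$.

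Pour le point 1, on utilise le fait que les modules cohérents forment un faisceau sur le site $S_G$ (cf. \autocite{MR2566967}) : cela fournit immédiatement, à partir d'une donnée de descente $(\mathcal{F}_i,\varphi_{ij})$, un unique module cohérent $\mathcal{F}$ sur $S$ dont la restriction à chaque $S_i$ s'identifie à $\mathcal{F}_i$ de manière compatible avec les $\varphi_{ij}$, d'où la descente effective. La pleine fidélité se déduit du même principe appliqué au préfaisceau des homomorphismes entre deux modules cohérents, qui est encore un faisceau sur $S_G$ : un morphisme entre deux modules cohérents sur $S$ est déterminé par ses restrictions à un $G$-recouvrement, et toute famille compatible se recolle de manière unique.

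Pour le point 2, on observe que si $X$ est un $S$-espace affinoïde, les $X_i:=X\times_S S_i$ forment un $G$-recouvrement fini de $X$ par des domaines affinoïdes. Un morphisme $f:p^*X\to p^*Y$ compatible aux données de descente canoniques se traduit ainsi en une famille $f_i:X_i\to Y_i\subset Y$ coïncidant sur les intersections $X_i\cap X_j$, et la propriété de faisceau des morphismes entre espaces $k$-analytiques relativement au $G$-site, inhérente à la définition même des espaces $k$-analytiques chez Berkovich, permet un unique recollement de ces $f_i$ en un $S$-morphisme $X\to Y$. On ne peut espérer ici que la pleine fidélité et pas l'effectivité : le recollement d'espaces $k$-affinoïdes le long d'un $G$-recouvrement ne produit pas en général un espace affinoïde, mais seulement un espace $k$-analytique. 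Aucune difficulté conceptuelle n'apparaît donc dans cette démonstration, et le seul point qui mérite une vérification soigneuse est l'identification entre produits fibrés catégoriques et intersections de domaines analytiques, qui garantit que la condition de cocycle standard pour les données de descente se traduit fidèlement en la condition de cocycle usuelle pour le recollement sur un $G$-recouvrement.
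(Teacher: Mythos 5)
Votre démonstration est correcte et suit essentiellement la même voie que l'article, qui se contente de renvoyer aux énoncés 1.2 et 1.3.2 de Berkovich (recollement des modules cohérents et propriété de faisceau du foncteur des points pour la $G$-topologie) : vous explicitez simplement la réduction, via l'identification $S_i\times_S S_j=S_i\cap S_j$, d'une donnée de descente relative à $\amalg_i S_i\to S$ à une donnée de recollement sur le $G$-recouvrement. Seule la référence pour le point 1 est à ajuster (c'est le théorème de type Kiehl--Tate chez Berkovich plutôt que la cohérence du faisceau structural).
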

\begin{lemm} $\label{equiv}$ Le pseudo-foncteur $\Psi$ qui à un espace affinoïde $S=\mathcal{M(A)}$ associe l'ensemble des modules cohérents au dessus de $S$ vérifie la propriété suivante : si $r\in (\mathbb{R}_+^*)^n$ est un polyrayon $k$-libre, un morphisme entre espaces affinoïdes $p:S'=\mathcal{M(B)}\to S=\mathcal{M(A)}$ est de descente effective pour $\Psi$ si le morphisme $S'_r\to S_r$ l'est pour le pseudo-foncteur obtenu par restriction de $\Psi$ à la catégorie des espaces $k_r$-affinoïdes.
\end{lemm}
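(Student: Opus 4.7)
La stratégie est d'appliquer le Lemme~\ref{my} à deux reprises, en s'appuyant sur le fait auxiliaire que toute extension de polyrayon $X_r \to X$ est elle-même de descente effective pour le pseudo-foncteur des modules cohérents. Ce fait auxiliaire découle du théorème de Ducros dans \autocite{ducros2020devisser}~: le morphisme $X_r \to X$ est plat (car $k \to k_r$ l'est), surjectif (via la section de Shilov $\sigma : X \to X_r$) et topologiquement propre puisque $X_r$ est un espace affinoïde.

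Considérons le carré commutatif
\[
\xymatrix{
    S'_r \ar[r]^{p_r} \ar[d]_{q_{S'}} & S_r \ar[d]^{q_S} \\
    S' \ar[r]^{p} & S
}
\]
L'hypothèse assure que $p_r$ est de descente effective, et le fait auxiliaire appliqué successivement à $S$ et $S'$ assure qu'il en est de même pour $q_S$ et $q_{S'}$. On appliquerait d'abord la partie~(1) du Lemme~\ref{my} à la factorisation $S'_r \to S_r \to S$. Outre la descente effective des deux flèches, il faudra vérifier que les morphismes induits $m : S'_r \times_S S'_r \to S_r \times_S S_r$ et $m' : S'_r \times_S S'_r \times_S S'_r \to S_r \times_S S_r \times_S S_r$ sont des morphismes de descente. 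Via les identifications
\[
S'_r \times_S S'_r \simeq (S' \times_S S') \times_k \mathcal{M}(k_r\hat{\otimes}_k k_r), \qquad S_r \times_S S_r \simeq S \times_k \mathcal{M}(k_r\hat{\otimes}_k k_r),
\]
et leurs analogues en degré trois, ces morphismes s'obtiennent par changement de base, le long de l'extension $k \to k_r \hat{\otimes}_k k_r$, des flèches induites par $p$ sur les produits fibrés itérés. Leur propriété de descente résulte alors d'une combinaison de l'hypothèse et du fait auxiliaire, ce qui fournit que la composée $S'_r \to S$ est de descente effective.

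On appliquerait ensuite la partie~(2) du Lemme~\ref{my} à la factorisation $S'_r \to S' \to S$~: puisque $q_{S'}$ est de descente effective et que la composée $p \circ q_{S'}$ vient d'être montrée de descente effective, on en déduit que $p$ est de descente effective, ce qui est le résultat voulu.

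La principale difficulté est la vérification des hypothèses de descente sur $m$ et $m'$ à la première étape, car $k_r \hat{\otimes}_k k_r$ n'est pas un corps, le polyrayon $r$ n'étant plus $k_r$-libre au sein de $k_r$. On traiterait ce point en décomposant plus finement les morphismes concernés via des extensions de polyrayon $k$-libres authentiques et des changements de base fidèlement plats ordinaires, de sorte à ramener chaque facteur dans le champ d'application du résultat auxiliaire de descente par polyrayon ou de l'hypothèse.
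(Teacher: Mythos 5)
Votre démonstration suit une route réellement différente de celle du texte, mais elle comporte une lacune à l'étape décisive, que vous identifiez d'ailleurs vous-même sans la combler : la vérification que $m:S'_r\times_S S'_r\to S_r\times_S S_r$ et $m'$ sont des morphismes de descente, condition indispensable pour appliquer la partie (1) du lemme \ref{my} à la factorisation $S'_r\to S_r\to S$. Or $S_r\times_S S_r\simeq S\times_k\mathcal{M}(k_r\hat{\otimes}_k k_r)$ et $m$ s'identifie au changement de base de $S'\times_S S'\to S$ le long de $\mathcal{M}(k_r\hat{\otimes}_k k_r)\to\mathcal{M}(k)$, qui n'est pas une extension de polyrayon $k$-libre puisque $k_r\hat{\otimes}_k k_r$ n'est pas un corps ; et si l'on factorise plutôt $m$ en changements de base de $p_r$ (par exemple le long de la projection $S_r\times_S S_r\to S_r$, qui est une couronne relative de rayon $r$), il faut savoir que $p_r$ est de descente \emph{universellement}, alors que l'hypothèse ne fournit que la descente effective de $p_r$ lui-même. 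Comme $p$ n'est supposé ni plat ni surjectif dans l'énoncé, aucun théorème général ne s'applique non plus à $S'\times_S S'\to S$. Le plan esquissé à la fin de votre proposition reste donc à l'état de programme, et c'est précisément là que réside toute la difficulté.

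Le texte évite entièrement ce problème par un argument direct sur les modules de Banach : la pleine fidélité se ramène à l'exactitude de la suite $0\to M\to M\hat{\otimes}_\mathcal{A}\mathcal{B}\to M\hat{\otimes}_\mathcal{A}\mathcal{B}\hat{\otimes}_\mathcal{A}\mathcal{B}$, qui se teste après $\hat{\otimes}_k k_r$ par la proposition 2.1.2 de \autocite{berkovich2012spectral} ; pour l'effectivité, on forme explicitement $M=\{x\in M',\ \varphi(x\hat{\otimes}1)=1\hat{\otimes}x\}$ et l'on constate que la flèche de comparaison $M\hat{\otimes}_\mathcal{A}\mathcal{B}\to M'$ devient, après $\hat{\otimes}_k k_r$, l'isomorphisme fourni par l'hypothèse, donc est un isomorphisme, la finitude de $M$ se descendant par la proposition 2.1.11 de \autocite{berkovich2012spectral}. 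Seule l'hypothèse sur $p_r$ lui-même est utilisée, jamais sur ses changements de base. Notez enfin que votre fait auxiliaire (la descente effective de $X_r\to X$ pour les modules cohérents) est exactement la proposition \ref{oridu}, établie juste après le présent lemme et indépendamment de lui ; l'obtenir en citant le théorème général de \autocite{ducros2020devisser} est licite extérieurement, mais court-circuite la construction que l'article cherche justement à mener.
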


\begin{proof}[Preuve]
Supposons que $S_r'\to S_r$ est de descente effective. On se donne $M$ et $N$ deux modules cohérents au dessus de $S$. Le morphisme $S'\to S$ est de descente, si la suite suivante est exacte : 
$$0\to M\to M\hat{\otimes}_\mathcal{A} \mathcal{B}\to M\hat{\otimes}_\mathcal{A} \mathcal{B}\hat{\otimes}_\mathcal{A} \mathcal{B}$$
et cette suite est exacte si et seulement si elle le reste après chapeau-tensorisation par $k_r$, ce qui est vrai puisque $S'_r\to S_r$ est de descente.

Pour l'effectivité, si on se donne un module cohérent $M'$ au dessus de $S'$ muni de données de descente $\varphi:M'\hat{\otimes}_\mathcal{A} \mathcal{B}\to \mathcal{B}\hat{\otimes}_\mathcal{A }M'$, on voudrait montrer que si l'on désigne par $M$ le $\mathcal{A}$-module définit par $M=\{x\in M',\varphi(x\hat{\otimes}1)=1\hat{\otimes}x\}$, on a un isomorphisme $\lambda:M\hat{\otimes}_\mathcal{A} \mathcal{B}\to M'$, mais la flèche sus-citée est un isomorphisme après chapeau-tensorisation avec $k_r$, donc c'est un isomorphisme. En effet, on peut considérer la suite exacte à gauche évidente : 
$$0\to M\to M'\to M'\hat{\otimes}_\mathcal{A} \mathcal{B}$$ On munit $M$ de la topologie induite, et on dispose donc d'une suite exacte admissible de modules de Banach, d'où par la proposition 2.1.2 de \autocite{berkovich2012spectral} une suite exacte 
$$0\to M_r\to M_r'\to M_r'\hat{\otimes}_\mathcal{{A}_r} \mathcal{B}_r$$
Puisque $S'_r\to S$ est de descente effective, on en déduit que l'on a un isomorphisme de $\mathcal{B}_r$-modules finis munis de données de descente $M_r\hat{\otimes}_{\mathcal{A}_r}\mathcal{B}_r \to M'_r$ mais cette flèche n'est autre que $\lambda_r$ donc par la proposition 2.1.2 de \autocite{berkovich2012spectral}, $\lambda $ est bien un isomorphisme.

De plus, par la proposition 2.1.11 de \autocite{berkovich2012spectral}, le $\mathcal{A}$-module $M$ est bien fini, puisque $M_r$ est fini au dessus de $\mathcal{A}_r$, d'où l'effectivité de $p$.
\end{proof}

La proposition suivante repose sur les calculs dus à Ducros dans \autocite{ducros2020devisser}.
\begin{prop} Considérons le pseudo-foncteur qui à un espace affinoïde $S=\mathcal{M(A)}$ associe la catégorie des $\mathcal{A}$-modules de Banach au dessus de $S$. Alors pour tout polyrayon $k$-libre $r\in (\mathbb{R}_+^*)^n$, la flèche canonique $p:\mathcal{M}(\mathcal{A}_r)\to \mathcal{M(A)}$ est une flèche de descente effective.
\end{prop}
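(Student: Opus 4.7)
Le plan est d'exploiter la structure très explicite de $\mathcal{A}_r=\mathcal{A}\hat{\otimes}_k k_r$ comme $\mathcal{A}$-module de Banach gradué par $\mathbb{Z}^n$. Tout élément admet un développement unique $\sum_{I\in\mathbb{Z}^n} a_I \underline{T}^I$ avec $a_I\in\mathcal{A}$ et $\|a_I\|r^I\to 0$ quand $|I|\to+\infty$. La projection sur le terme constant $s:\mathcal{A}_r\to\mathcal{A}$, $\sum a_I\underline{T}^I\mapsto a_0$, est alors une rétraction $\mathcal{A}$-linéaire bornée de l'inclusion structurale $\mathcal{A}\hookrightarrow\mathcal{A}_r$ ; par la même formule appliquée à un facteur distingué, j'obtiendrai pour chaque $n\geq 0$ une rétraction $\mathcal{A}$-linéaire bornée du produit tensoriel complété $(n+1)$-itéré de $\mathcal{A}_r$ au-dessus de $\mathcal{A}$ sur le $n$-itéré.

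Ces rétractions me permettront ensuite de montrer que le complexe d'Amitsur associé à la flèche $\mathcal{A}\to\mathcal{A}_r$, soit
\[ 0\to \mathcal{A}\to \mathcal{A}_r \to \mathcal{A}_r\hat{\otimes}_\mathcal{A}\mathcal{A}_r \to \mathcal{A}_r\hat{\otimes}_\mathcal{A}\mathcal{A}_r\hat{\otimes}_\mathcal{A}\mathcal{A}_r \to \cdots, \]
est admissiblement contractile dans la catégorie des $\mathcal{A}$-modules de Banach, la contraction étant donnée en chaque cran par l'application de $s$ au dernier facteur. En tensorisant complètement à gauche avec un $\mathcal{A}$-module de Banach $M$ quelconque, je préserverai cette contraction et son admissibilité, d'où l'exactitude admissible de la suite
\[ 0\to M\to M\hat{\otimes}_\mathcal{A}\mathcal{A}_r \rightrightarrows M\hat{\otimes}_\mathcal{A}\mathcal{A}_r\hat{\otimes}_\mathcal{A}\mathcal{A}_r, \]
et par là même la pleine fidélité du foncteur tiré en arrière.

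Pour l'effectivité, étant donnée une donnée de descente $(M',\varphi)$ avec $\varphi:M'\hat{\otimes}_\mathcal{A}\mathcal{A}_r\xrightarrow{\sim}\mathcal{A}_r\hat{\otimes}_\mathcal{A}M'$ vérifiant la condition de cocycle, je poserai $M=\{m\in M'\mid\varphi(m\otimes 1)=1\otimes m\}$ muni de la norme induite, et je construirai explicitement un inverse de la flèche canonique $\lambda:M\hat{\otimes}_\mathcal{A}\mathcal{A}_r\to M'$ en appliquant la rétraction $s$ (dans la variable de gauche) à $\varphi$. La vérification que la formule obtenue fournit bien une rétraction à valeurs dans $M$ reposera sur la condition de cocycle et reproduira, au niveau des données de descente, l'argument classique d'effectivité pour une extension scalaire munie d'une section $\mathcal{A}$-linéaire bornée.

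L'obstacle principal sera le passage du niveau algébrique au niveau topologique : il faudra contrôler que $M$ est effectivement un $\mathcal{A}$-module de Banach (c'est-à-dire complet pour la norme induite) et que $\lambda$ est un isomorphisme \emph{topologique} et non seulement ensembliste. Ces questions devraient se régler en invoquant le critère d'admissibilité pour les suites exactes courtes de modules de Banach déjà mis à profit dans la preuve du \autoref{equiv} (proposition 2.1.2 de \autocite{berkovich2012spectral}), appliqué à la suite exacte courte définissant $M$ et aux suites exactes obtenues par tensorisation du complexe d'Amitsur contracté.
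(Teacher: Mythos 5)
Votre stratégie est pour l'essentiel celle du texte : exploiter le développement explicite $\sum_I a_I\underline{T}^I$ des éléments de $\mathcal{A}_r$ et les applications d'extraction de coefficients, qui sont $\mathcal{A}$-linéaires et bornées. La partie concernant la pleine fidélité, via l'homotopie contractante du complexe d'Amitsur induite par la rétraction $s$, est correcte, et même plus détaillée que l'argument du texte, qui se contente d'observer que l'inclusion $M\to M\hat{\otimes}_k k_r$ est une isométrie.

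En revanche, l'esquisse de la partie effectivité comporte une lacune. D'abord, la seule rétraction $s$ (extraction du terme constant) ne suffit pas à construire l'inverse de $\lambda:M\hat{\otimes}_\mathcal{A}\mathcal{A}_r\to M'$ : il faut développer $\varphi(m'\otimes 1)=\sum_J \mu_J T_1^J$ dans la variable de gauche et utiliser \emph{tous} les coefficients $\mu_J$, puis démontrer que chacun d'eux appartient à $M$. C'est précisément là le contenu du calcul de la preuve du texte, à savoir la relation $n_{I,J}=\lambda_{J,I}$ obtenue en appliquant la condition de cocycle à un élément générique du produit tensoriel triple, et qui donne $\varphi(\mu_J\otimes 1)=1\otimes\mu_J$ pour tout $J$ ; la surjectivité et l'injectivité de $\lambda$ s'en déduisent alors par bijectivité de $\varphi$, puisque $\sum_J\mu_J T^J$ et $m'$ ont la même image par $m\mapsto\varphi(m\otimes 1)$. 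Ensuite, l'argument classique d'effectivité que vous invoquez pour une extension munie d'une section $\mathcal{A}$-linéaire bornée n'existe pas sous cette forme : l'argument classique du cas scindé requiert une section qui soit un morphisme d'anneaux, ce que $s$ n'est pas (on a $s(T\cdot T^{-1})=1$ alors que $s(T)s(T^{-1})=0$), et le cas d'une section qui n'est qu'un scindage de $\mathcal{A}$-modules relève de la descente le long des morphismes universellement injectifs, théorème nettement plus délicat dont la transposition au cadre des modules de Banach n'a rien d'immédiat. La vérification cocyclique explicite est donc indispensable ; une fois celle-ci faite, vos remarques finales sur l'admissibilité et la complétude de $M$, via la proposition 2.1.2 de \autocite{berkovich2012spectral}, règlent effectivement les questions topologiques.
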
 

\begin{proof}[Démonstration]

Le fait que $p$ soit un morphisme de descente est évident puisque si $M$ un $\mathcal{A}$-module de Banach, on a une suite exacte admissible $0\to M\to M\hat{\otimes}_k k_r\to M\hat{\otimes}_k k_r\hat{\otimes}_k k_r$ puisque l'inclusion de $M$ dans $M_r$ est une isométrie. On montre donc l'effectivité. On se donne donc $(M,\varphi)$ un $\mathcal{A}_r$-module de Banach muni de données de descente.

Notons $T$ la famille des fonctions coordonnées sur $k_r$, et $T_1$ et $T_2$ les deux familles de fonctions coordonnées de l'anneau $k_r\hat{\otimes}k_r$. Posons $B:=\mathcal{A}_r\hat{\otimes}\mathcal{A}_r$. On dispose d'identifications :

$$\mathcal{A}_r=\mathcal{A}\{r^{-1}T,rT^{-1}\}=\mathcal{A}\{r^{-1}T_1,rT_1^{-1}\}=\mathcal{A}\{r^{-1}T_2,rT_2^{-1}\}$$	

Lorsqu'un objet mathématique sera défini au dessus de $\mathcal{A}\{r^{-1}T_i,rT_i^{-1}\}$, pour $i=1$, $2$ ou $3$, nous l'indiquerons en indice. On a ainsi la relation : 

$$B=\mathcal{A}_{r,1}\{r^{-1}T_2,rT_2^{-1}\}=\mathcal{A}_{r,2}\{r^{-1}T_1,rT_1^{-1}\}.$$

La donnée de descente $\varphi$ est donc un isomorphisme de $B$-modules de $M_1\{r^{-1}T_2,rT_2^{-1}\}$ vers $M_2\{r^{-1}T_1,rT_1^{-1}\}.$

L'isomorphisme $\varphi$ vérifie la condition de cocycle $\varphi_{13}=\varphi_{23}\circ\varphi_{12}$, comme sur le diagramme suivant :

\[\xymatrix{
    {M_1\{r^{-1}T_2,rT_2^{-1},r^{-1}T_3,rT_3^{-1}\}} \ar[r]^{\varphi_{12}}  \ar[rd]_{\varphi_{13}} & {M_2\{r^{-1}T_1,rT_1^{-1},r^{-1}T_3,rT_3^{-1}\}} \ar[d]_{\varphi_{23}} \\
    & {M_3\{r^{-1}T_1,rT_1^{-1},r^{-1}T_2,rT_2^{-1}\}}
  }
  \]

On va maintenant appliquer cette relation sur un élément générique $\sum_{I,J} m_{I,J,1}T_2^I T_3^J$ dans ${M_1\{r^{-1}T_2,rT_2^{-1},r^{-1}T_3,rT_3^{-1}\}}$. 
Pour tout $I$, écrivons $\varphi(\sum_{J}m_{I,J,1}T_2^J)=\sum_{J}n_{I,J,2}T_1^J$. Pour tout $J$, écrivons $\varphi(\sum_{I} m_{I,J,1}T_2^I)=\sum_I l_{I,J,2}T_1^I$. Écrivons enfin pour tout $I$, $\varphi(\sum_J l_{I,J,1}T_2^J)=\sum_J \lambda_{I,J,2}T_1^J$. 

Alors par définition, $\varphi_{13}(\sum_{I,J} m_{I,J,1}T_2^I T_3^J)=\sum_{I,J}n_{I,J,3}T_1^J T_2^I$. On dispose aussi de la relation $\varphi_{12}(\sum_{I,J} m_{I,J,1}T_2^I T_3^J)=\sum_{I,J}l_{I,J,2}T_1^I T_3^J$, et $\varphi_{23}(\sum_{I,J}l_{I,J,2}T_1^I T_3^J)=\sum_{I,J}\lambda_{I,J,3}T_1^I T_2^J$, et l'on en déduit ainsi la relation $n_{I,J}=\lambda_{J,I}$.

Maintenant, posons $M_0:=\{m\in M,\varphi(m_1)=m_2\}$. Alors $M_0$ est un $\mathcal{A}$-module de Banach complet comme fermé d'un espace complet. Considérons $m\in M$, et écrivons $\varphi(m_1)=\sum_J \mu_{J,2}T_1^J$. Appliquons ce qui précède avec $m_{I,J}=m$ si $(I,J)=(0,0)$ et $0$ sinon. Alors en gardant les mêmes notations, puisque $\varphi_{13}(m)=\sum_J \mu_{J,3}T_1^J$, on en déduit que $n_{I,J}=0$ si $I\neq 0$, et $n_{0,J}=\mu_J$. De même, puisque $\varphi_{12}(m)=\sum_J \mu_{J,2}T_1^J$, on obtient $l_{I,J}=0$ si $J\neq 0$, et $l_{I,0}=\mu_I$. Enfin, par définition des $\lambda_{I,J}$, on a $\varphi(l_{I,0,1})=\sum_{J} \lambda_{I,J,2}T_1^J$ soit encore grâce à la relation obtenue à la fin du paragraphe précédent $\varphi(\mu_I)=\sum_J n_{J,I,2} T_1^J=n_{0,I,2}=\mu_{I,2}$. Cela démontre que $\mu_I$ est dans $M_0$ pour tout $I$. 

Maintenant, le reste découle de ce résultat. En effet, par injectivité du morphisme $\varphi$, puisque $\varphi(\sum_J \mu_{J,1}T_1^J)=\sum_J \mu_{J,2}T_1^J=\varphi(m_1)$, on en déduit que $m_1=\sum_J \mu_{J,1}T_1^J$, ce qui montre que le morphisme naturel $\bar{\omega}:M_0\{r^{-1}T,rT^{-1}\}\to M_1$, qui envoie $\sum_J \alpha_J T^J$ sur $\sum_J \alpha_{J,1}T_1^J$ est surjectif. De plus, $\varphi(\sum_J \alpha_{J,1}T_1^J)=\sum_J \alpha_{J,2} T_1^J$, ce qui montre que ce morphisme est aussi injectif, et admissible (car $\varphi$ est admissible). De plus, modulo l'isomorphisme $M\simeq M_0\hat{\otimes}\mathcal{A}_r$, l'isomorphisme $\varphi$ est simplement l'égalité : 

$$M_0 \{ r^{-1} T_1,r T_1^{-1} \} \{ {r^{-1} T_2, rT_2^{-1}\}} \simeq M_0 \{ {r^{-1}T_2,rT_2^{-1}\} \{r^{-1}T_1,rT_1^{-1}\} }$$
Donc l'isomorphisme précédent est bien un isomorphisme de données de descente, et cela démontre que la flèche canonique $\mathcal{M}(\mathcal{A}_r)\to \mathcal{M(A)}$ est une flèche de descente effective. \qedhere

\end{proof}

\begin{rema} Ainsi, si l'on se donne un module $M'$ fini au dessus de $A_r$ muni de données de descente, on peut descendre le module de Banach sous jacent en un module de Banach $M$, et par la proposition 2.1.11 de \autocite{berkovich2012spectral}, c'est un module fini.

De même, si l'on se donne $\mathcal{D}$ une $\mathcal{A}_r$-algèbre de Banach munie de données de descente, on peut descendre le module de Banach sous-jacent grâce au théorème précédent, qui fournit $\mathcal{D}_0$ un $\mathcal{A}$-module de Banach, qui est en fait une $\mathcal{A}$-algèbre de Banach puisque la multiplication est une application bilinéaire bornée. De plus, la même démonstration que la proposition 2.1.8 de \autocite{berkovich2012spectral} montre que $\mathcal{D}$ est une algèbre $\mathcal{A}_r$-affinoïde si et seulement si $\mathcal{D}_0$ est une algèbre $\mathcal{A}$-affinoïde.

\end{rema}

Ces remarques montrent les deux propositions suivantes : 

\begin{prop} \label{oridu} Considérons le pseudo-foncteur qui à un espace $k$-affinoïde $S=\mathcal{M(A)}$ associe la catégorie des $\mathcal{A}$-modules cohérents au dessus de $S$. Alors pour tout polyrayon $k$-libre $r\in (\mathbb{R}_+^*)^n$, la flèche canonique $p:\mathcal{M}(\mathcal{A}_r)\to \mathcal{M(A)}$ est une flèche de descente effective pour ce pseudo-foncteur.
\end{prop}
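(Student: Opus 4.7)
The plan is to deduce this directly from the preceding proposition, which already establishes effective descent at the level of Banach modules, together with the remark that immediately follows it. No substantial new argument is required beyond carefully identifying the category of coherent modules on $\mathcal{M}(\mathcal{A})$ with a full subcategory of the category of Banach $\mathcal{A}$-modules.

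For the full faithfulness part, I would start with two coherent $\mathcal{A}$-modules $M$ and $N$ and observe that any $\mathcal{A}$-linear map between finite $\mathcal{A}$-modules is automatically continuous for the canonical Banach structures (see Chapter 2 of \autocite{berkovich2012spectral}). Hence $\mathrm{Hom}_{\mathcal{A}}(M,N)$ in the category of coherent modules coincides with the same Hom group in the category of Banach modules, and similarly after extension of scalars to $\mathcal{A}_r$ and to $\mathcal{A}_r \hat\otimes_\mathcal{A} \mathcal{A}_r$. The full faithfulness in the Banach setting provided by the previous proposition then transfers verbatim to coherent modules.

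For essential surjectivity, one starts from a coherent $\mathcal{A}_r$-module $M'$ equipped with a descent datum $\varphi$. Forgetting down to Banach modules, the preceding proposition produces a Banach $\mathcal{A}$-module $M_0$ and an isomorphism of descent data $M_0 \hat\otimes_\mathcal{A} \mathcal{A}_r \simeq M'$. The only remaining point is to upgrade $M_0$ from a Banach module to a finite, hence coherent, $\mathcal{A}$-module. This is precisely what is granted by Proposition 2.1.11 of \autocite{berkovich2012spectral}: the finiteness of $M_0 \hat\otimes_\mathcal{A} \mathcal{A}_r$ over $\mathcal{A}_r$ forces the finiteness of $M_0$ over $\mathcal{A}$. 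This is the step announced in the remark, and the proof is therefore essentially a matter of assembling these ingredients. The main conceptual obstacle was already dealt with in the previous proposition, namely constructing the descended Banach module; here there is no further obstruction beyond the routine verification that the constructions are compatible with the extra finiteness hypotheses.
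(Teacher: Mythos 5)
Votre démonstration est correcte et suit essentiellement le même chemin que l'article : celui-ci déduit la proposition de l'énoncé précédent sur les modules de Banach et de la remarque qui le suit, laquelle invoque précisément la proposition 2.1.11 de \autocite{berkovich2012spectral} pour garantir la finitude du module descendu. Votre traitement de la pleine fidélité via la continuité automatique des morphismes entre modules finis est un complément raisonnable que l'article laisse implicite.
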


\begin{prop} \label{hyp1} Considérons le pseudo-foncteur qui à un espace $k$-affinoïde $S=\mathcal{M(A)}$ associe la catégorie des espaces analytiques $k$-affinoïdes au dessus de $S$ (resp. la catégorie des algèbres $\mathcal{A}$-affinoïdes). Alors pour tout polyrayon $k$-libre $r\in (\mathbb{R}_+^*)^n$, la flèche canonique $p:\mathcal{M}(\mathcal{A}_r)\to \mathcal{M(A)}$ est une flèche de descente effective pour ce pseudo-foncteur. 
\end{prop}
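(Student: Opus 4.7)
Le plan est d'appliquer la proposition précédente sur la descente des modules de Banach, en constatant que la structure multiplicative et le caractère affinoïde descendent également, comme l'indique la remarque précédant cet énoncé.

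Pour la pleine fidélité, étant donné deux $\mathcal{A}$-algèbres affinoïdes $\mathcal{D}$ et $\mathcal{E}$ et un morphisme $f:\mathcal{D}_r\to \mathcal{E}_r$ de $\mathcal{A}_r$-algèbres compatible aux données de descente canoniques, la proposition précédente appliquée aux modules de Banach sous-jacents fournit un morphisme de $\mathcal{A}$-modules de Banach $f_0:\mathcal{D}\to \mathcal{E}$ tel que $(f_0)_r=f$. Pour vérifier que $f_0$ est multiplicatif, on utilise que l'application canonique $\mathcal{D}\hat{\otimes}_\mathcal{A}\mathcal{D}\to (\mathcal{D}\hat{\otimes}_\mathcal{A}\mathcal{D})_r$ est une isométrie, donc injective, et la relation $\mu_\mathcal{E}\circ(f_0\hat{\otimes}f_0)=f_0\circ\mu_\mathcal{D}$ se vérifie après extension à $k_r$.

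Pour l'effectivité, partons d'une $\mathcal{A}_r$-algèbre affinoïde $\mathcal{D}$ munie de données de descente $\varphi$ vérifiant la condition de cocycle. En oubliant la structure d'algèbre, la proposition précédente produit un $\mathcal{A}$-module de Banach $\mathcal{D}_0$ et un isomorphisme $\mathcal{D}_0\hat{\otimes}_\mathcal{A}\mathcal{A}_r\simeq \mathcal{D}$ compatible à $\varphi$. La multiplication de $\mathcal{D}$, vue comme élément de $\Hom_{\mathcal{A}_r}(\mathcal{D}\hat{\otimes}_{\mathcal{A}_r}\mathcal{D},\mathcal{D})$, est compatible aux données de descente induites ; elle descend donc en une application bilinéaire bornée sur $\mathcal{D}_0$ qui en fait une $\mathcal{A}$-algèbre de Banach (les axiomes d'associativité et d'unité se vérifient par la même injectivité qu'au paragraphe précédent).

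L'obstacle principal est alors de démontrer que $\mathcal{D}_0$ est effectivement $\mathcal{A}$-affinoïde, point signalé dans la remarque comme un analogue de la proposition 2.1.8 de \autocite{berkovich2012spectral}. Le plan est de choisir des éléments $t_1',\ldots,t_m'\in \mathcal{D}_0$ dont les images dans $\mathcal{D}$ approximent un système de générateurs affinoïdes $t_1,\ldots,t_m$ de $\mathcal{D}$ sur $\mathcal{A}_r$, ce qui est possible puisque $\mathcal{D}_0\otimes_\mathcal{A}\mathcal{A}_r$ est dense dans $\mathcal{D}$ ; par un argument standard d'approximation, les $t_i'$ engendrent encore $\mathcal{D}$ comme $\mathcal{A}_r$-algèbre affinoïde pour un polyrayon $s$ convenable. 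On obtient ainsi un morphisme surjectif $\mathcal{A}\{s^{-1}T\}_r\to \mathcal{D}$, et le choix des $t_i'$ dans $\mathcal{D}_0$ assure que ce morphisme est compatible aux données de descente. L'idéal noyau est alors un module cohérent muni de données de descente, qui descend par la proposition \ref{oridu} en un idéal cohérent de $\mathcal{A}\{s^{-1}T\}$ dont le quotient s'identifie à $\mathcal{D}_0$, ce qui prouve que $\mathcal{D}_0$ est $\mathcal{A}$-affinoïde. L'assertion duale pour la catégorie des algèbres $\mathcal{A}$-affinoïdes s'en déduit par passage à la catégorie opposée.
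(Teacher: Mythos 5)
Votre démarche est pour l'essentiel celle du texte : descente du module de Banach sous-jacent par la proposition précédente, descente de la multiplication comme morphisme borné compatible aux données de descente, puis réduction au fait que $\mathcal{D}_0$ est $\mathcal{A}$-affinoïde dès que $\mathcal{D}$ est $\mathcal{A}_r$-affinoïde. Le texte expédie ce dernier point en renvoyant à la démonstration de la proposition 2.1.8 de Berkovich, tandis que vous l'explicitez en descendant l'idéal noyau d'une présentation par la proposition \ref{oridu}, ce qui est une façon propre de conclure et rend l'argument autonome.

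Un point de votre rédaction est cependant inexact tel quel : vous choisissez des éléments $t_1',\ldots,t_m'$ de $\mathcal{D}_0$ approximant les générateurs $t_i$ de $\mathcal{D}$ en invoquant la densité de $\mathcal{D}_0\otimes_\mathcal{A}\mathcal{A}_r$ dans $\mathcal{D}$. Or $\mathcal{D}_0$ lui-même est un sous-$\mathcal{A}$-module fermé et en général strict de $\mathcal{D}$ (prendre $\mathcal{D}_0=\mathcal{A}$ et $\mathcal{D}=\mathcal{A}_r$ : aucun élément de $\mathcal{A}$ n'approche la coordonnée $T$ de $k_r$) ; la densité de $\mathcal{D}_0\otimes_\mathcal{A}\mathcal{A}_r$ ne fournit que des approximants de la forme $\tilde{t}_i=\sum_j a_{ij} d_{ij}$ avec $d_{ij}\in\mathcal{D}_0$ et $a_{ij}\in\mathcal{A}_r$. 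La réparation est immédiate et dans l'esprit de votre argument : on prend pour nouveau système générateur la famille finie des $d_{ij}$, qui sont bien dans $\mathcal{D}_0$ ; la sous-algèbre affinoïde qu'ils engendrent sur $\mathcal{A}_r$ contient les $\tilde{t}_i$, donc le lemme usuel de perturbation des épimorphismes admissibles fournit la surjection admissible $\mathcal{A}_r\{s^{-1}\underline{T}\}\to\mathcal{D}$ compatible aux données de descente dont vous avez besoin. Le reste de votre argument (descente du noyau cohérent par \ref{oridu}, identification du quotient à $\mathcal{D}_0$ par pleine fidélité, passage aux espaces par antiéquivalence) est correct.
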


\begin{rema}\label{polycardef}

Considérons un morphisme de schémas affines $p:\Spec B\to \Spec A$ fini et localement libre. Considérons $b\in B$ et $(f_i)_{i\in I}$ une famille finie d'éléments de $A$ qui génère l'idéal unité, et tel que $B_{f_i}$ est libre au dessus de $A_{f_i}$. Alors, la multiplication par $b$ induit un endomorphisme du $A_{f_i}$-module $B_{f_i}$, et possède donc un polynôme caractéristique $\chi_i\in A_{f_i}[X]$. Puisque le polynôme caractéristique d'un endomorphisme d'un module libre et fini est invariant par changement de base, on en déduit que les polynômes $\chi_i$ et $\chi_j$ sont égaux dans $A_{f_i f_j}[X]$, et par définition du faisceau structural d'un schéma affine, on en déduit l'existence d'un polynôme $\chi_b\in A[X]$ dont l'image dans chaque $A_{f_i}[X]$ est $\chi_i$. Ce polynôme est indépendant du recouvrement de $\Spec A$ choisi, et vérifie encore la relation $\chi_b(b)=0$, que l'on peut vérifier dans chaque $A_{f_i}[X]$. 
\end{rema}
On énonce maintenant deux lemme concernant les polynômes caractéristiques qui serviront dans la suite. 

\begin{lemm} \label{polycarbc}Considérons des anneaux $A$, $B$, et $C$ munis de morphismes d'anneaux $\psi:A\to B$, $h:A\to C$, tels que $\psi$ soit fini et localement libre. Notons $h':B\to B\otimes_A C$ et $\psi':C \to B\otimes_A C$ les deux morphismes canoniques.

Notons encore $h$ le morphisme induit par $h$ de $A[X]$ vers $C[X]$. Soit $b\in B$. Le morphisme $\Spec B\otimes_A C \to \Spec C$ est fini et localement libre, donc $h'(b)$ possède un polynôme caractéristique relativement à $\psi'$, que l'on note $\chi_{h'(f)}\in C[X]$. Notons $\chi_f\in A[X]$ le polynôme caractéristique de $f$ relativement à $\psi$. Alors, on dispose de l'égalité $h(\chi_f)=\chi_{h'(f)}$.
\end{lemm}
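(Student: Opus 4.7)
Le plan est de vérifier l'égalité localement sur $\Spec A$, en se plaçant sur un recouvrement où $\psi$ devient libre, puis d'invoquer la caractérisation du polynôme caractéristique donnée dans la \autoref{polycardef}.

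Plus précisément, je commencerais par choisir, grâce à l'hypothèse que $\psi$ est fini et localement libre, une famille finie $(f_i)_{i\in I}$ d'éléments de $A$ engendrant l'idéal unité et telle que chaque localisation $B_{f_i}$ soit un $A_{f_i}$-module libre de rang fini. La famille $(h(f_i))_{i\in I}$ engendre alors l'idéal unité dans $C$, et on a pour chaque $i$ l'identification canonique $(B\otimes_A C)_{h(f_i)}=B_{f_i}\otimes_{A_{f_i}} C_{h(f_i)}$, qui est un $C_{h(f_i)}$-module libre de même rang que $B_{f_i}$ sur $A_{f_i}$.

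Il reste alors à appliquer le fait classique suivant de l'algèbre linéaire : si $M$ est un module libre de rang fini sur un anneau $R$, $u$ un endomorphisme de $M$, et $R\to R'$ un morphisme d'anneaux, alors le polynôme caractéristique de l'endomorphisme $u\otimes_R R'$ du $R'$-module $M\otimes_R R'$ coïncide avec l'image dans $R'[X]$ du polynôme caractéristique de $u$. Ceci découle immédiatement du fait que le déterminant $\det(XI-u)$ se calcule dans une base et que les morphismes d'anneaux commutent aux déterminants. Appliqué à $R=A_{f_i}$, $R'=C_{h(f_i)}$, $M=B_{f_i}$ et $u$ la multiplication par $b$, on obtient que le polynôme caractéristique de la multiplication par $h'(b)$ sur $(B\otimes_A C)_{h(f_i)}$ est l'image de $\chi_b$ dans $C_{h(f_i)}[X]$, soit encore l'image de $h(\chi_b)$.

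Par la caractérisation rappelée dans la \autoref{polycardef}, le polynôme $\chi_{h'(b)}\in C[X]$ est l'unique polynôme dont l'image dans chaque $C_{h(f_i)}[X]$ coïncide avec le polynôme caractéristique local. Les polynômes $\chi_{h'(b)}$ et $h(\chi_b)$ de $C[X]$ ayant même image dans chacun des $C_{h(f_i)}[X]$, on conclut par recollement que $h(\chi_b)=\chi_{h'(b)}$. Il n'y a pas ici d'obstacle réel : la difficulté se réduit à l'invariance par changement de base du polynôme caractéristique pour les modules libres, combinée à la nature locale de la construction du polynôme caractéristique dans le cas localement libre.
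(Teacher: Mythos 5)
Votre preuve est correcte et suit essentiellement la même démarche que celle du texte : réduction au cas où $B$ est un $A$-module libre par localisation (le texte le fait en choisissant un voisinage affine d'un point de $\Spec A$, vous via le recouvrement $(f_i)$ et le recollement de la remarque \ref{polycardef}), puis constat que la matrice de la multiplication par $h'(b)$ dans la base induite est l'image par $h$ de celle de la multiplication par $b$, et que le polynôme caractéristique commute au changement de base car les déterminants (ou mineurs) commutent aux morphismes d'anneaux. Il n'y a rien à redire.
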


\begin{proof} La situation est résumée par le diagramme suivant : 
$$\xymatrix{
    B\otimes_A C    & C \ar[l]^{\psi'} \\
   B \ar[u]^{h'}  & A\ar[u]^h \ar[l]^{\psi}
  }$$ 

Soit $x\in \Spec A$. Par les propriétés des morphismes finis et localement libres, il existe $V$ un voisinage affine de $x$ dans $\Spec A$, tel que l'image inverse de $V$ sur $\Spec B$ est un ouvert affine $W$ de $\Spec B$ tel que $\mathcal{O}(W)$ est un $\mathcal{O}(V)$-module libre. Considérons l'image inverse $U$ de $V$ sur $\Spec C$. Alors, l'image inverse de $U$ sur $\Spec D$ s'identifie à $\Spec (\mathcal{O}(U)\otimes_{\mathcal{O}(V)} \mathcal{O}(W)) $ qui est un $\mathcal{O}(U)$-module libre. Maintenant, puisque la définition des polynôme caractéristiques est locale, il suffit de montrer que l'égalité du lemme vaut dans $\mathcal{O}(V)[X]$, et on peut donc supposer que $B$ est un $A$-module libre.

Maintenant, on écrit $B$ comme une somme directe finie $B=\bigoplus_{i\in I} {A b_i}$ avec $b_i\in B$, et $I=\llbracket 1,n\rrbracket$. Notons $F=(f_{ij})_{(i,j)\in I\times I}$ la matrice de la multiplication par $f$ dans la base $(b_i)_{i\in I}$. Alors, on dispose de l'égalité $B\otimes_A C=\bigoplus_{i\in I} {C h'(b_i)}$. On a aussi la relation $h'(f)h'(b_j)=h'(fb_j)=h'(\sum_{i\in I} f_{ij} d_i)=\sum_{i\in I} h(f_{ij}) d_i$ par les propriétés du produit tensoriel, puisque pour tout $i,j\in I$, $f_{ij}\in A$. Cela montre que la matrice de la multiplication par $h'(f)$ dans la base $(h'(b_i))_{i\in I}$ est la matrice $(h(f_{ij}))_{(i,j)\in I\times I}$.

Notons $\chi_f=\sum_{i=1}^n a_i X^i$. Alors, on a $a_i=(-1)^{n-k} \sum_{\vert J \vert =n-k} F[J]$ avec $F[J]$ le mineur principal de la matrice $F$ obtenu en enlevant les colonnes et les lignes dont les indices sont dans $J$. On en déduit que $h(a_i)=(-1)^{n-k} \sum_{\vert J \vert =n-k} h(F[J])$, et puisque $h$ est un morphisme d'algèbre, et que le déterminant est un polynôme en les coefficients, on en déduit que pour tout mineur principal $F[J]$, on dispose de l'égalité $h(F[J])=G[J]$ avec $G$ la matrice de taille $I\times I$ à coefficient dans $C$ définie par $G=(h(f_{ij})_{(i,j)\in I\times I})$. Par le paragraphe précédent, c'est exactement la matrice de la multiplication par $h'(f)$ dans la base $(h'(b_i))_{i\in I}$, et puisque la formule exprimant les coefficients du polynôme caractéristique en fonction des mineurs principaux reste valable, on en déduit que les coefficients de $\chi_{h'(f)}$ sont exactement $h(a_i)$, ce qui prouve que l'on a l'égalité $h(\chi_f)=\chi_{h'(f)}$. 
\end{proof}

Le lemme suivant exprime la compatibilité du polynôme caractéristique avec la structure produit d'une $k$-algèbre.

\begin{lemm}\label{polycarprod}Considérons $k$ un corps, et une $k$-algèbre finie de la forme $A=\prod_{i=1}^n A_i$ avec $A_i$ une $k$-algèbre finie pour tout $i\in \{1,...,n\}$. Soit $f=(f_1,...,f_n)\in A$. Alors, on dispose de l'égalité $\chi_f=\prod_{i=1}^n\chi_{f_i}$, où $\chi_{f_i}$ est le polynôme caractéristique de $f_i$ vu comme élément de $A_i$.
\end{lemm}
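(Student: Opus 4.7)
L'idée est d'exploiter la décomposition de $A$ comme somme directe de $k$-espaces vectoriels $A=\bigoplus_{i=1}^n A_i$ (où l'on identifie $A_i$ à son image dans $A$ via l'injection non-unitale $a\mapsto (0,\dots,0,a,0,\dots,0)$ avec $a$ en $i$-ème position), pour obtenir une forme diagonale par blocs pour la multiplication par $f$.

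La première étape consistera, pour chaque $i\in\{1,\dots,n\}$, à choisir une $k$-base finie $(b_j^{(i)})_{j\in J_i}$ de la $k$-algèbre finie $A_i$. La réunion des images de ces bases dans $A$ fournit alors une $k$-base de $A$ compatible avec la décomposition en somme directe.

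Le point clé sera d'observer que, pour un élément $b\in A_i$ vu dans $A$, on a l'égalité $f\cdot b=(f_i \cdot_{A_i} b)$ dans $A$, où $\cdot_{A_i}$ désigne la multiplication dans $A_i$ ; ceci découle directement de la nature composante par composante du produit dans $A=\prod A_i$. Autrement dit, la multiplication par $f$ dans $A$ préserve chaque sous-espace $A_i$ et s'y restreint en la multiplication par $f_i$. Dans la base choisie, la matrice de la multiplication par $f$ est donc diagonale par blocs, le $i$-ème bloc étant la matrice de la multiplication par $f_i$ sur $A_i$. On conclut alors immédiatement par la formule bien connue donnant le polynôme caractéristique d'une matrice diagonale par blocs comme produit des polynômes caractéristiques des blocs, ce qui fournit $\chi_f = \prod_{i=1}^n \chi_{f_i}$.

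Il n'y a pas à proprement parler d'obstacle dans cette démonstration, qui est un pur calcul d'algèbre linéaire élémentaire. La seule subtilité à garder en tête est que les $A_i$, vus à l'intérieur de $A$, ne sont pas des sous-$k$-algèbres unitaires de $A$ (l'unité de $A$ est $(1,\dots,1)$, pas l'image de $1_{A_i}$), mais cela n'a aucune incidence puisqu'on les considère uniquement comme $k$-sous-espaces vectoriels, et que ce qui importe est la stabilité par multiplication par $f$, qui reste valide grâce au caractère composante par composante du produit.
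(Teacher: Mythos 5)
Votre preuve est correcte et suit essentiellement le même chemin que celle de l'article : choix d'une base de chaque $A_i$, observation que la multiplication par $f$ est diagonale par blocs dans la base réunie, et conclusion par la multiplicativité du déterminant sur les blocs. La remarque sur le caractère non unitaire des injections $A_i\hookrightarrow A$ est un complément bienvenu mais sans incidence, comme vous le notez vous-même.
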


\begin{proof}
Il suffit de se donner pour tout $i\in \{1,...,n\}$ une base $\mathcal{B}_i$ de $A_i$. Alors, si l'on note $M_i$ la matrice de la multiplication par $f_i$ dans la base $\mathcal{B}_i$, la matrice de la multiplication par $f$ dans la base de $A$ obtenue à partir de chaque base $\mathcal{B}_i$ est une matrice diagonale par bloc de $M_i$. Le résultat s'en déduit en prenant le polynôme caractéristique, puisque le déterminant d'une matrice diagonale par bloc est le produit des déterminants de chacun de ses blocs.
\end{proof}

On donne un lemme qui relie la norme des coefficients de $\chi_f$ à la norme de $f$.

\begin{lemm}\label{polycarmajor}
Considérons $k\to L$ une extension finie de corps ultramétriques complets. Soit $f\in L$, tel que $\vert f \vert \leqslant  1$. Alors, si $\overline{k}$ est une clôture algébrique de $k$ qui contient $L$, les conjugués de $f$ dans $\overline{k}$ sont de norme plus petite que 1. En particulier, le polynôme $\chi_f$ est à coefficient dans $\overset{\circ}{k}$, c'est à dire que tous ses coefficients sont de norme inférieure à 1. 
\end{lemm}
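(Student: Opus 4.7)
La stratégie est la suivante. Puisque le corps $k$ est complet pour sa valeur absolue ultramétrique, il est classique (voir par exemple \autocite{bosch1984non}, 3.2.4) que la valeur absolue sur $k$ s'étend de manière unique en une valeur absolue ultramétrique sur la clôture algébrique $\overline{k}$. On note encore $\vert \cdot \vert$ cette extension. L'unicité entraîne automatiquement l'invariance par l'action de n'importe quel $k$-plongement de corps : en effet, si $\sigma:\overline{k}\to \overline{k}$ est un $k$-automorphisme, alors $x\mapsto \vert \sigma(x)\vert$ est encore une valeur absolue ultramétrique sur $\overline{k}$ prolongeant celle de $k$, donc elle coïncide avec $\vert \cdot \vert $ par unicité.

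J'en déduirais alors directement la première assertion : si $f'$ est un conjugué de $f$ dans $\overline{k}$, c'est l'image de $f$ par un certain $k$-plongement $\sigma$ de $L$ dans $\overline{k}$, que l'on peut prolonger (encore par un argument classique utilisant la clôture algébrique) en un $k$-automorphisme de $\overline{k}$. On obtient alors $\vert f' \vert = \vert \sigma(f)\vert =\vert f \vert\leqslant 1$.

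Pour la seconde assertion, je procéderais ainsi. Notons $n=[L:k]$ et $d$ le degré du polynôme minimal $\mu_f\in k[X]$ de $f$ sur $k$. Par un argument standard d'algèbre linéaire (sur $k$-algèbres finies et polynôme minimal), le polynôme caractéristique $\chi_f$ de la multiplication par $f$ sur le $k$-espace vectoriel $L$ vaut $\chi_f=\mu_f^{n/d}$. En particulier, $\chi_f$ se factorise dans $\overline{k}[X]$ sous la forme
\[
\chi_f(X)=\prod_{i=1}^{n}(X-f_i),
\]
où les $f_i$ parcourent les conjugués de $f$ (avec multiplicité $n/d$). Par le premier point, on a $\vert f_i \vert \leqslant 1$ pour tout $i$. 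Les coefficients de $\chi_f$ s'expriment comme des fonctions symétriques élémentaires en les $f_i$, donc au signe près comme des sommes de produits de $f_i$. L'inégalité ultramétrique appliquée à la somme et la multiplicativité de la valeur absolue fournissent alors que chaque coefficient est de norme $\leqslant 1$ dans $\overline{k}$. Comme ces coefficients sont $a$ $priori$ dans $k$, ils sont dans $\overset{\circ}{k}$, ce qui conclut.

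L'unique point qui demande un peu de soin est l'invariance de l'extension de la valeur absolue par les automorphismes galoisiens : tout le reste est formel. Je n'anticipe pas de réelle difficulté ici, le résultat étant essentiellement un condensé de faits classiques de théorie des corps valués complets non-archimédiens.
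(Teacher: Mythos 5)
Votre preuve est correcte et suit essentiellement la même démarche que celle du texte : identification des racines de $\chi_f$ avec les conjugués de $f$, invariance de l'unique prolongement de la valeur absolue à $\overline{k}$ sous l'action galoisienne (le texte cite directement le lemme 3.8.1.4 de \autocite{bosch1984non} là où vous la déduisez de l'unicité du prolongement), puis inégalité triangulaire ultramétrique sur le produit développé. La précision $\chi_f=\mu_f^{n/d}$ est un raffinement bienvenu mais équivalent à l'observation du texte selon laquelle $\chi_f$ et $P_f$ ont les mêmes racines.
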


\begin{proof}
Considérons $\overline{k}$ une clôture algébrique de $k$ contenant $L$. Les racines de $\chi_f$ sont exactement les racines du polynôme minimal de $f$. Puisque le polynôme minimal $P_f$ de $f$ est irréductible sur $k$, le groupe de Galois $\mathrm{Aut}(\overline{k}/k)$ agit transitivement sur les racines de $P_f$, donc pour toute racine de $\alpha\in \overline{k}$, il existe $\gamma\in\mathrm{Aut}(\overline{k}/k) $ tel que $\gamma \alpha=f$. Par le lemme 3.8.1.4 de \autocite{bosch1984non}, on en déduit que $\vert \alpha \vert \leqslant 1$, et puisque $\chi_f$ est unitaire, par inégalité triangulaire ultramétrique, en développant une écriture $\chi_f=\prod_\alpha (X-\alpha)^{n_\alpha} $, où le produit porte sur l'ensemble des racines de $P_f$, on en déduit que $\chi_f$ est bien à coefficients dans $\overset{\circ}{k}$.
\end{proof}

Ce dernier lemme nous permettra de relier le polynôme caractéristique d'un élément  d'une $k$-algèbre avec le polynôme caractéristique de l'image de cet élément dans l'algèbre réduite associée. 

\begin{lemm} Considérons un corps $k$, et une $k$-algèbre finie locale $A$, dont nous noterons $m$ l'idéal maximal. Notons $\pi:A\to A/m$ le morphisme quotient. Soit $f\in A$. Alors, le polynôme $\chi_f$ possède les mêmes facteurs irréductibles que le polynôme $\chi_{\pi(f)}$.

\end{lemm}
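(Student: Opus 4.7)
L'idée serait d'exploiter le fait que $A$, étant une $k$-algèbre locale finie, est artinienne, donc son idéal maximal $m$ est nilpotent. Je fixerais donc $n$ tel que $m^n=0$ et considérerais la filtration décroissante $A=m^0\supset m\supset m^2\supset\cdots\supset m^n=0$ par des $k$-sous-espaces vectoriels de dimension finie.

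La multiplication par $f$ préserve chacun des idéaux $m^i$, donc préserve cette filtration, et induit sur chaque quotient $m^i/m^{i+1}$ la multiplication par $\pi(f)$ puisque $m$ annule $m^i/m^{i+1}$. En choisissant une $k$-base de $A$ adaptée à cette filtration (obtenue en complétant successivement des bases des $m^i$ pour $i$ décroissant), la matrice de la multiplication par $f$ devient triangulaire supérieure par blocs, les blocs diagonaux étant les matrices de la multiplication par $\pi(f)$ sur les $k$-espaces vectoriels $m^i/m^{i+1}$. On en déduit la relation $\chi_f=\prod_{i=0}^{n-1}\chi_i$, où $\chi_i$ désigne le polynôme caractéristique de la multiplication $k$-linéaire par $\pi(f)$ sur $m^i/m^{i+1}$.

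Chaque quotient $m^i/m^{i+1}$ est en réalité un $K$-espace vectoriel de dimension finie $d_i$, où $K:=A/m$, et la multiplication par $\pi(f)$ y est $K$-linéaire, donc agit comme une homothétie. Pour calculer $\chi_i$, je choisirais une $K$-base de $m^i/m^{i+1}$ et une $k$-base de $K$ : dans la $k$-base résultante, la matrice de multiplication par $\pi(f)$ est diagonale par blocs de taille $[K:k]$, chaque bloc étant identique à la matrice de la multiplication par $\pi(f)$ sur $K$ vu comme $k$-espace vectoriel. J'obtiendrais ainsi $\chi_i=\chi_{\pi(f)}^{d_i}$, puis $\chi_f=\chi_{\pi(f)}^N$ avec $N=\sum_{i=0}^{n-1} d_i\geqslant 1$ (car $d_0=\dim_K(A/m)=1$). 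Les deux polynômes ont alors exactement les mêmes facteurs irréductibles. L'argument est essentiellement formel ; la seule étape demandant un peu d'attention est le calcul du polynôme caractéristique $k$-linéaire d'une homothétie $K$-linéaire sur un $K$-espace vectoriel, qui se résout par le choix de base décrit plus haut.
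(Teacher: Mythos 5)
Votre démonstration est correcte, mais elle suit un chemin réellement différent de celui du texte. Vous exploitez la nilpotence de $m$ (l'algèbre $A$ étant artinienne locale) et la filtration $A\supset m\supset\cdots\supset m^n=0$ : la multiplication par $f$ est triangulaire par blocs dans une base adaptée, chaque bloc diagonal étant la multiplication par $\pi(f)$ sur le $K$-espace vectoriel $m^i/m^{i+1}$ (avec $K=A/m$), d'où la relation exacte $\chi_f=\chi_{\pi(f)}^N$ avec $N=\sum_i \dim_K(m^i/m^{i+1})\geqslant 1$ ; le point délicat, le calcul du polynôme caractéristique $k$-linéaire d'une homothétie $K$-linéaire, est correctement traité par le choix d'une $K$-base puis d'une $k$-base de $K$. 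Le texte procède tout autrement : il caractérise les facteurs irréductibles $R$ de $\chi_f$ par la non-inversibilité de $R(f)$ (via le polynôme minimal et un argument de pgcd invariant par extension des scalaires), puis utilise la localité de $A$ pour ramener la non-inversibilité de $R(f)$ à celle de $R(\pi(f))$ dans le corps $A/m$. Votre approche est plus structurelle et donne un résultat plus fort (l'identité $\chi_f=\chi_{\pi(f)}^N$, qui contrôle aussi les multiplicités et suffit largement pour l'usage qui en est fait dans la remarque $\ref{redfinale}$) ; celle du texte évite tout choix de base mais nécessite le détour par le lemme auxiliaire sur la divisibilité et l'inversibilité. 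Les deux arguments sont valables.
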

\begin{proof} Les polynômes caractéristiques et minimum d'un endomorphisme d'un $k$-espace vectoriels possèdent les mêmes facteurs irréductibles. 

Considérons $R$ un polynôme irréductible à coefficient dans $k$. Alors, $R$ divise $\chi_f$ (resp. $\chi_{\pi(f)}$) si et seulement si $R(f)$ (resp. $R(\pi(f))$) est non inversible. La propriété est évidente pour $\chi_{\pi(f)}$ puisque $A/m$ est un corps, on montre la propriété pour $\chi_{\pi(f)}$.

Si $R$ divise $\chi_f$, alors $R$ divise $P_f$ le polynôme minimal de $f$, donc $P_f=RQ$ avec $Q$ un polynôme à coefficient dans $k$ de degré strictement plus petit que $P_f$. Maintenant, $P_f(f)=0=R(f)Q(f)$ et si $R(f)$ était inversible, alors $Q$ serait un polynôme à coefficients dan $k$ annulant $f$ de degré strictement plus petit que $P_f$ ; c'est absurde, donc $R(f)$ est non inversible. 

Réciproquement, si $R(f)$ est non inversible, alors la multiplication par $R(f)$ est non surjective, donc non injective, et il existe $v\in A$ non nul tel que $R(f)v=0$. Si l'on prend $L$ une cloture algébrique de $k$, et que l'on décompose $R=\prod_{i=1}^n (X-\alpha_i)^n_i$, avec $\alpha\in L$, il existe donc $\i\in \{1,...,n\}$ tel que $(m_f-\alpha_i Id)$ est non-inversible (sinon $v=0$), où l'on a noté $m_f$ l'endomorphisme de $A$ égal à la multiplication par $f$. Cela montre que $\alpha_i$ est une valeur propre de $m_f$, donc une racine de $\chi_f$. Maintenant, le pgcd de $R$ et $\chi_f$ divise $R$, c'est donc $R$ ou $1$. Ce même pgcd est invariant par extension des scalaires (calculé par l'algorithme d'Euclide, toutes les opérations restent dans le corps de base), donc on peut le calculer dans $L[X]$. Or puisque $(X-\alpha_i)$ divise $R$ et divise $\chi_f$, ce pgcd ne peut être égal à 1, donc il vaut $R$, et $R$ divise $\chi_f$.

La démonstration du lemme est alors immédiate : prenons $R$ un polynôme irréductible de $k[X]$. Alors, $R$ divise $\chi_f$ si et seulement si $R(f)$ est non inversible dans $A$ si et seulement si $R(f)$ n'est pas dans $m$ l'idéal maximal de $A$ (car $A$ est local) si et seulement si $\pi(R(f))$ est non nul si et seulement si $\pi(R(f))=R(\pi(f))$ est non inversible si et seulement si $R$ divise $\chi_{\pi(f)}$. Cela montre que $\chi_f$ et $\chi_{\pi(f)}$ ont les mêmes facteurs irréductibles. \qedhere 

\end{proof}

\begin{rema} \label{redfinale}
Supposons que le corps $k$ est ultramétrique et complet, et que l'algèbre $A$ est une $k$-algèbre de Banach finie et locale, d'idéal maximal $m$, et de morphisme canonique $\pi:A\to A/m$. Soit $f\in A$. Fixons une clôture algébrique $L$ de $k$ qui contient $A/m$. Supposons que l'on ait montré que toute racine de $\chi_{\pi(f)}$ dans $L$ est de norme inférieure à 1. Alors, par le lemme précédent, si l'on note $\alpha$ une racine de $\chi_f$ dans $L$, alors $\alpha$ est aussi une racine de $\chi_{\pi(f)}$, donc $\alpha $ est de norme inférieure à 1. En développant l'écriture de $\chi_f$ comme produit de facteur $(X-\alpha)$ comptés avec multiplicité, avec $\vert \alpha\vert \leqslant 1$, puis en utilisant l'inégalité triangulaire ultramétrique, on en déduit que $\chi_f$ est aussi à coefficients dans $\overset{\circ}{k}$.
\end{rema}

Les lemmes qui précèdent vont maintenant servir dans la preuve de la proposition suivante.

\begin{prop}\label{reparation}
Considérons un morphisme d'espaces $k$-affinoïdes $p:\mathcal{M(B)\to \mathcal{M(A)}}$ fidèlement plat et fini. Soit $\mathcal{D}$ une $\mathcal{B}$-algèbre de Banach, et $\mathcal{C}$ une sous $\mathcal{A}$-algèbre fermée de $\mathcal{D}$. On suppose que le morphisme borné naturel de $\mathcal{B}$-algèbres de  $\varphi:\mathcal{C}\otimes_\mathcal{A} \mathcal{B} \to \mathcal{D}$ est un isomorphisme, ainsi que le morphisme obtenu par changement de base $\varphi\hat{\otimes}_k k_r$, pour tout polyrayon libre non trivial.

Alors, l'algèbre $\mathcal{C}$ est $k$-affinoïde si et seulement si l'algèbre $\mathcal{D}$ est $k$-affinoïde, et alors $\varphi$ est un isomorphisme d'algèbres $k$-affinoïdes lorsque l'on munit $\mathcal{C}\otimes_\mathcal{A} \mathcal{B}$ de sa norme naturelle d'algèbre tensorielle complétée. \end{prop}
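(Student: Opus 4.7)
La direction facile est immédiate : si $\mathcal{C}$ est $k$-affinoïde, alors $\mathcal{C}\otimes_\mathcal{A}\mathcal{B}$ est un $\mathcal{C}$-module fini (puisque $\mathcal{B}$ est fini sur $\mathcal{A}$), donc hérite naturellement d'une structure de $k$-algèbre affinoïde, et les produits tensoriels algébrique et complété coïncident par le corollaire 3.7.3.6 de \autocite{bosch1984non} cité dans l'introduction, ce qui identifie $\mathcal{D}$ comme $k$-algèbre affinoïde.

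Pour la réciproque, supposons $\mathcal{D}$ $k$-affinoïde. On se ramène d'abord au cas où $k$ est non trivialement valué et $\mathcal{D}$ strictement $k$-affinoïde en étendant les scalaires à $k_r$ pour un polyrayon $k$-libre $r$ convenable : l'hypothèse sur $\varphi\hat{\otimes}_k k_r$ et la proposition \ref{hyp1} (descente des algèbres affinoïdes par extension de polyrayon) rendent cette réduction légitime. Choisissons ensuite des générateurs $d_1,\ldots,d_n$ de $\mathcal{D}$ avec $\|d_i\|_{\sup}\leqslant 1$, ainsi que des générateurs $b_1,\ldots,b_m$ de $\mathcal{B}$ comme $\mathcal{A}$-module, de normes uniformément bornées par un certain $M\geqslant 1$. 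La surjection $\mathcal{C}$-linéaire bornée $\mathcal{C}^m\twoheadrightarrow\mathcal{D}$, $(c_k)\mapsto\sum_k c_kb_k$, entre espaces de Banach étant ouverte par le théorème de Banach--Schauder, on peut relever chaque $d_i$ en $(\gamma_{ik})_k\in\mathcal{C}^m$ vérifiant $\max_k\|\gamma_{ik}\|_\mathcal{C}\leqslant s$ pour une constante $s$ ne dépendant ni de $i$, ni du relèvement.

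Posons alors $\mathcal{C}_0:=\mathcal{A}\{s^{-1}U_{ik}\}$, $k$-algèbre affinoïde munie du morphisme borné $\mathcal{C}_0\to\mathcal{C}$, $U_{ik}\mapsto\gamma_{ik}$. Le produit tensoriel complété $\mathcal{C}_0\hat{\otimes}_\mathcal{A}\mathcal{B}$ s'identifie à l'algèbre de Tate $\mathcal{B}\{s^{-1}U_{ik}\}$, et la présentation canonique $k\{(Ms)^{-1}T_1,\ldots,(Ms)^{-1}T_n\}\twoheadrightarrow\mathcal{D}$ envoyant $T_i$ sur $d_i$ se factorise par $\mathcal{B}\{s^{-1}U_{ik}\}$ via $T_i\mapsto\sum_k b_k U_{ik}$ (de norme de Gauss $\leqslant Ms$), ce qui rend surjectif le morphisme canonique $\psi:\mathcal{C}_0\hat{\otimes}_\mathcal{A}\mathcal{B}\to\mathcal{D}$. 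Par fidèle platitude de $\mathcal{B}$ sur $\mathcal{A}$, la surjectivité de $\psi$ descend en surjectivité de $\mathcal{C}_0\to\mathcal{C}$ ; le noyau étant fermé (continuité vers un espace séparé), $\mathcal{C}$ est un quotient d'une $k$-algèbre affinoïde par un idéal fermé, donc lui-même $k$-affinoïde. L'assertion que $\varphi$ est un isomorphisme d'algèbres $k$-affinoïdes résulte alors du théorème d'application ouverte appliqué à l'isomorphisme abstrait déjà donné par hypothèse.

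L'obstacle principal réside dans la surjectivité de $\psi$, qui repose sur la factorisation explicite via le produit tensoriel complété et exige un contrôle serré des constantes de bornes obtenues par application ouverte. Notons que les lemmes préparatoires \ref{polycarbc}--\ref{polycarmajor} ainsi que la remarque \ref{redfinale}, qui majorent par $1$ les coefficients des polynômes caractéristiques $\chi_i\in\mathcal{C}[X]$ des $d_i$, fournissent un contrôle intrinsèque complémentaire : ils permettent notamment un argument alternatif de style Artin--Tate faisant apparaître $\mathcal{C}$ comme module fini sur la sous-$k$-algèbre affinoïde engendrée par les coefficients des $\chi_i$, où le passage délicat reste la fermeture de cette sous-algèbre dans $\mathcal{D}$.
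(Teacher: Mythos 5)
Votre sens direct et votre réduction au cas strict non trivialement valué coïncident pour l'essentiel avec la preuve du texte (qui invoque les propositions 6.1.3.4 de BGR et 2.1.8 de Berkovich). En revanche, le sens difficile ($\mathcal{D}$ affinoïde entraîne $\mathcal{C}$ affinoïde) comporte un trou réel, précisément au point que vous signalez vous-même comme l'obstacle principal : la surjectivité de $\psi$. La flèche $k\{(Ms)^{-1}T_1,\ldots,(Ms)^{-1}T_n\}\to\mathcal{D}$, $T_i\mapsto d_i$, n'est pas surjective dès que $Ms>1$ : prenez $\mathcal{A}=\mathcal{B}=k$, $\mathcal{D}=k\{T\}$ et $d_1=T$ ; l'image est alors la sous-algèbre stricte des séries convergeant sur le disque de rayon $Ms$. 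Plus généralement, l'image de $\psi$ contient bien la sous-$\mathcal{B}$-algèbre dense engendrée par les $d_i$, mais l'antécédent naturel de $\sum_I a_I d^I$ serait $\sum_I a_I(\sum_k b_k U_{ik})^I$, dont les termes ont une norme de Gauss de l'ordre de $\Vert a_I\Vert (Ms)^{\vert I\vert}$ : la série ne converge pas dans $\mathcal{B}\{s^{-1}U_{ik}\}$, et l'argument d'approximations successives échoue faute de contrôle uniforme des antécédents. Le théorème de Banach--Schauder ne fournit que des relèvements $\gamma_{ik}$ \emph{bornés}, et non à puissances bornées ; c'est exactement là que réside toute la difficulté de l'énoncé, et votre construction ne la résout pas.

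C'est cette difficulté que la preuve du texte contourne par une autre voie : elle ne cherche pas de présentation de $\mathcal{C}$, mais montre que $\overset{\circ}{\mathcal{D}}$ est entier sur $\overset{\circ}{\mathcal{C}}$, en établissant --- via les lemmes $\ref{polycarbc}$, $\ref{polycarprod}$, $\ref{polycarmajor}$ et la remarque $\ref{redfinale}$ --- que le polynôme caractéristique $\chi_f\in\mathcal{C}[X]$ de tout $f\in\overset{\circ}{\mathcal{D}}$, relatif au morphisme fini localement libre $\Spec\mathcal{D}\to\Spec\mathcal{C}$, est à coefficients de norme spectrale inférieure à $1$ (par évaluation en chaque point de $\mathcal{M(C)}$ et réduction aux anneaux artiniens locaux), puis en appliquant la proposition 6.3.3.2 de \autocite{bosch1984non}. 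L'argument alternatif de style Artin--Tate que vous évoquez en conclusion va dans cette direction, mais il n'est qu'esquissé ; c'est lui, et non la construction par application ouverte, qui devrait constituer le cœur de la démonstration.
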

\begin{proof} Notons $r$ un polyrayon libre. Par la proposition 2.1.8 de \autocite{berkovich2012spectral}, une $k$-algèbre $\mathcal{C}$ est $k$-affinoïde si et seulement si la $k_r$-algèbre $C\hat{\otimes_k}k_r$ est $k_r$-affinoïde. On peut donc supposer que les algèbres $\mathcal{A}$ et $\mathcal{B}$ sont strictement $k$-affinoïdes, et que le corps $k$ est non trivialement valué, et montrer que sous les mêmes hypothèses, la $k$-algèbre $\mathcal{C}$ est strictement $k$-affinoïde si et seulement si l'algèbre $\mathcal{D}$ est strictement $k$-affinoïde. En effet, les hypothèses sont invariantes par extension des scalaires à $k_r$, et si cette assertion est démontrée, supposons que $C$ est $k$-affinoïde. On choisit $r$ un polyrayon libre tel que $\mathcal{A}\hat{\otimes}_k k_r$ et $\mathcal{B}\hat{\otimes}_k k_r$ et $\mathcal{C}\hat{\otimes}_k k_r$ soient strictement $k_r$-affinoïdes, et $k_r$ non trivialement valué. Alors, $\mathcal{D}\hat{\otimes}_k k_r$ est strictement $k_r$-affinoïde, donc $\mathcal{D}$ est $k$-affinoïde. Si maintenant c'est $\mathcal{D}$ qui est $k$-affinoïde, on choisit $r$ un polyrayon libre tel que $\mathcal{A}\hat{\otimes}_k k_r$ et $\mathcal{B}\hat{\otimes}_k k_r$ et $\mathcal{D}\hat{\otimes}_k k_r$ soient strictement $k_r$-affinoïdes, et $k_r$ non trivialement valué, et alors $\mathcal{C}\hat{\otimes}_k k_r$ est strictement $k_r$-affinoïde, donc $\mathcal{C}$ est $k$-affinoïde.

Supposons que $\mathcal{C}$ est strictement $k$-affinoïde. Alors, la proposition 6.1.3.4 de \autocite{bosch1984non} appliquée au morphisme fini de $k$-algèbres $\mathcal{C}\to \mathcal{D}$ montre que $\mathcal{D}$ est strictement  $k$-affinoïde, et par une nouvelle application de 6.1.3.4 de \autocite{bosch1984non} au morphisme $\varphi$, on en déduit que $\varphi$ est un isomorphisme d'algèbres $k$-affinoïdes de $\mathcal{C}\otimes_\mathcal{A} \mathcal{B}$ muni de sa norme naturelle d'algèbre tensorielle complétée vers $\mathcal{D}$.

Supposons maintenant que $\mathcal{D}$ est strictement $k$-affinoïde. Alors, par la proposition 6.1.3.4 de \autocite{bosch1984non} appliquée à l'inverse de $\varphi$, l'algèbre $\mathcal{C}\otimes_\mathcal{A} \mathcal{B}$ munie de sa norme tensorielle complété est strictement $k$-affinoïde, et $\varphi$ est un isomorphisme d'algèbres $k$-affinoïdes de $\mathcal{C}\otimes_\mathcal{A} \mathcal{B}$ muni de sa norme naturelle d'algèbre tensorielle complétée vers $D$. On veut maintenant montrer que l'algèbre $C$ est strictement $k$-affinoïde. L'idée est d'appliquer la proposition 6.3.3.2 de \autocite{bosch1984non} au morphisme $\mathcal{C}\to \mathcal{D}$. Pour cela, il suffit de démontrer que l'anneau $\overset{\circ}{\mathcal{D}}$ est entier sur $\overset{\circ}{\mathcal{C}}$, où $\overset{\circ}{\mathcal{D}}$ (resp. $\overset{\circ}{\mathcal{C}}$) désigne l'ensemble des éléments dont les puissances successives sont bornées sur $\mathcal{D}$ (resp. $\mathcal{C}$).

Puisque $\mathcal{D}$ est strictement $k$-affinoïde, par la proposition 6.2.3.1 de \autocite{bosch1984non}, on a l'égalité $\overset{\circ}{\mathcal{D}}=\{f\in D, \rho_D (f)\leqslant 1\}$, où $\rho_D$ désigne la norme spectrale sur $\mathcal{D}$. On va montrer l'égalité analogue pour $\mathcal{C}$. L'inclusion $\overset{\circ}{\mathcal{C}}\subset\{f\in C, \rho_C (f)\leqslant 1\}$ est vraie pour n'importe quelle algèbre de Banach, on va donc montrer l'inclusion inverse. Par la proposition 1.3.1 de \autocite{berkovich2012spectral}, pour toute algèbre de Banach ${E}$, on a pour tout $f\in E$, l'égalité $\rho_E(f)=\max_{x\in \mathcal{M}(E)}\vert f(x)\vert$. Soit donc $f\in \mathcal{C}$ tel que $\rho_\mathcal{C} (f)\leqslant 1$. Alors, pour tout $y\in \mathcal{M(C)}$, on dispose de l'inégalité $\vert f (y)\vert \leqslant 1$, donc pour tout $x\in \mathcal{M(D)}$, si l'on note $\psi:\mathcal{C}\to \mathcal{D}$ l'inclusion canonique, alors $\vert \psi (f) \vert (x)\leqslant 1$, donc $\rho_\mathcal{D}(\psi(f))\leqslant 1$, donc $\psi(f)$ est de puissance bornée dans $\mathcal{D}$. Comme $\psi$ préserve la norme ($\mathcal{C}$ est une sous-$\mathcal{A}$ algèbre de $\mathcal{D}$), on en déduit que $f$ est de puissance bornée dans $\mathcal{C}$.

Soit donc $f\in \mathcal{D}$, tel que $\rho_\mathcal{D}(f)\leqslant 1$. Puisque le morphisme $\Spec \mathcal{B}\to \Spec \mathcal{A}$ est fidèlement plat et fini, c'est aussi le cas du morphisme $\Spec \mathcal{D}\to \Spec \mathcal{C}$, qui est donc fini et localement libre, et le polynôme caractéristique $\chi_f\in C[X]$ de la remarque $\ref{polycardef}$ est bien défini. Ce polynôme vérifie la relation $\chi_f(f)=0$, et on va montrer que $\chi_f$ est en fait à coefficients dans $\overset{\circ}{\mathcal{C}}$. Soit $x\in \mathcal{M(C)}$, notons $\chi_f=\sum_{i=0}^n a_n X^i$. Il suffit de montrer que $\vert a_i (x) \vert \leqslant 1$. On dispose du diagramme commutatif suivant : 

$$\xymatrix{
   \mathcal{D}\otimes_\mathcal{C} \mathcal{H}(x)     & \mathcal{H}(x)\ar[l]  \\
   \mathcal{D} \ar[u]& \mathcal{C}\ar[l] \ar[u]
  }$$

Notons $\psi'$ le morphisme canonique de $\mathcal{H}(x)$ dans $\mathcal{D}\otimes_\mathcal{C}\mathcal{H}(x)$ et $h'$ le morphisme canonique de $\mathcal{D}$ dans $\mathcal{D} \otimes_\mathcal{C} \mathcal{H}(x)$. Par le lemme $\ref{polycarbc}$, on dispose de l'égalité $\sum_{i=0}^n a_i (x)X^i=\chi_{h'(f)}$, où $a_i(x)$ désigne l'image de $a_i\in C$ dans $\mathcal{H}(x)$. Pour montrer que $\vert a_i (x) \vert \leqslant 1$, on va montrer que $\chi_{h'(f)}\in \overset{\circ}{\mathcal{H}(x)}[X]$. La $\mathcal{H}(x)$-algèbre $\mathcal{D} \otimes_\mathcal{C} \mathcal{H}(x)$ est finie, elle est donc artinienne, et on peut lui appliquer le théorème de structure des anneaux artiniens, qui est le lemme 10.53.6 de  \cite[\href{https://stacks.math.columbia.edu/tag/00JB}{Lemma 00JB}]{stacks-project} : on a $\mathcal{D} \otimes_\mathcal{C} \mathcal{H}(x)=\prod_{i=1}^M \mathcal{D}_i$ avec $\mathcal{D}_i$ des anneaux artiniens locaux pour tout $i\in \{1,...,M\}$. Notons $h'(f)=(f_1,...,f_M)$ avec $f_i\in \mathcal{D}_i$ pour tout $i\in \{1,...,M\}$. Alors, par le lemme $\ref{polycarprod}$, pour montrer que $\chi_{h'(f)}$ est dans $\overset{\circ}{\mathcal{H}(x)}[X]$, il suffit de montrer que chacun des $\chi_{f_i}$ est dans $\overset{\circ}{\mathcal{H}(x)}[X]$. Par la remarque $\ref{redfinale}$, si l'on note $m_i$ l'idéal maximal de $\mathcal{D}_i$, et $\pi_i:\mathcal{D}_i\to \mathcal{D}_i /m_i$ la projection canonique, il suffit de montrer que toute racine de $\chi_{\pi_i(f_i)}$ dans une extension algébriquement close de $\mathcal{H}(x)$ est de norme inférieure à 1. Maintenant, on dispose d'un morphisme continu $z:\mathcal{D}\to \mathcal{D}_i/ m_i$, et comme $\rho_\mathcal{D}(f)\leqslant 1$, on en déduit que $\vert z(f) \vert =\vert \pi_i (f_i )\vert\leqslant 1$, où $\vert .\vert$ désigne ici l'unique norme sur l'extension finie $D_i/m_i$ de $\mathcal{H}(x)$ qui prolonge la norme de $\mathcal{H}(x)$. On est maintenant en mesure d'appliquer le lemme $\ref{polycarmajor}$, qui montre que toute racine de $\chi_{\pi_i (f_i) }$ dans une cloture algébrique de $\mathcal{H}(x)$ est de norme plus petite que 1. On en déduit par la remarque $\ref{redfinale}$ que $\chi_{f_i}$ est à coefficient dans $\overset{\circ}{\mathcal{H}(x)}$, et c'est donc le cas du produit $\chi_{h'(f)}=\sum_{i=0}^n a_i(x) X^i$, et comme cela vaut pout tout $x\in \mathcal{M(C)}$, le polynôme $\chi_f=\sum_{i=0}^n a_i X^i $ est lui même bien à coefficients dans $\overset{\circ}{\mathcal{C}}$ et comme c'est un polynôme annulateur de $f$, on en déduit que $f\in \overset{\circ}{\mathcal{D}}$ est bien entier sur $\overset{\circ}{\mathcal{C}}$, et comme cela vaut pour tout $f\in \overset{\circ}{\mathcal{D}}$, on est en mesure d'appliquer la proposition 6.3.3.2 de \autocite{bosch1984non} pour obtenir que l'algèbre $\mathcal{C}$ est bien strictement $k$-affinoïde. \qedhere 

\end{proof}

\begin{prop} \label{hyp2} Considérons le pseudo-foncteur au dessus de la catégorie des espaces $k$-affinoïdes qui à un espace affinoïde $S=\mathcal{M(A)}$ associe la catégorie des espaces $k$-affinoïdes au dessus de $S$. Alors tout morphisme fini, plat et surjectif entre espaces affinoïdes $S'=\mathcal{M(B)}\to S=\mathcal{M(A)}$ est un morphisme de descente effective . 
\end{prop}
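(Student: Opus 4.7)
Le plan proposé est d'exploiter la finitude du morphisme $p:\mathcal{M(B)}\to\mathcal{M(A)}$ pour réduire la descente à sa contrepartie schématique fidèlement plate, puis d'invoquer la proposition $\ref{reparation}$ pour récupérer le caractère affinoïde de l'algèbre descendue. Concrètement, partant d'une donnée de descente au sens du pseudo-foncteur en jeu, c'est à dire une $\mathcal{B}$-algèbre $k$-affinoïde $\mathcal{D}$ munie d'un isomorphisme $\varphi:\mathcal{D}\hat{\otimes}_\mathcal{A}\mathcal{B}\to \mathcal{B}\hat{\otimes}_\mathcal{A}\mathcal{D}$ de $\mathcal{B}\hat{\otimes}_\mathcal{A}\mathcal{B}$-algèbres vérifiant la condition de cocycle, on commencera par observer que, $\mathcal{B}$ étant un $\mathcal{A}$-module fini, le principe de finitude rappelé plus haut identifie les produits tensoriels complétés intervenant ici aux produits tensoriels algébriques : $\mathcal{B}\hat{\otimes}_\mathcal{A}\mathcal{D}=\mathcal{B}\otimes_\mathcal{A}\mathcal{D}$, $\mathcal{D}\hat{\otimes}_\mathcal{A}\mathcal{B}=\mathcal{D}\otimes_\mathcal{A}\mathcal{B}$, et $\mathcal{B}\hat{\otimes}_\mathcal{A}\mathcal{B}=\mathcal{B}\otimes_\mathcal{A}\mathcal{B}$. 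La donnée $\varphi$ est donc purement algébrique et fournit une donnée de descente schématique sur la $\mathcal{B}$-algèbre abstraite $\mathcal{D}$ relativement au morphisme fidèlement plat $\Spec\mathcal{B}\to\Spec\mathcal{A}$.

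La deuxième étape consistera à appliquer la descente fidèlement plate schématique classique (exposé 8 de \autocite{grothendieck2002rev}) pour produire une $\mathcal{A}$-algèbre $\mathcal{C}$, explicitement l'égalisateur $\mathcal{C}=\{d\in\mathcal{D}:\varphi(d\otimes 1)=1\otimes d\}$, telle que le morphisme canonique $\mathcal{C}\otimes_\mathcal{A}\mathcal{B}\to\mathcal{D}$ soit un isomorphisme de $\mathcal{B}$-algèbres compatible avec les données de descente. La troisième étape, qui constitue le cœur de la difficulté, consistera à récupérer une structure $k$-affinoïde sur $\mathcal{C}$. On équipera pour ce faire $\mathcal{C}$ de la topologie induite par $\mathcal{D}$ : comme égalisateur de deux flèches continues à valeurs dans un espace de Banach, $\mathcal{C}$ est une sous-$\mathcal{A}$-algèbre fermée de $\mathcal{D}$, donc une algèbre de Banach. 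L'isomorphisme $\mathcal{C}\otimes_\mathcal{A}\mathcal{B}\to\mathcal{D}$ restera un isomorphisme après $\hat{\otimes}_k k_r$ pour tout polyrayon libre $r$ (car $\mathcal{A}\to\mathcal{A}_r$ est fidèlement plat, et la finitude assure de nouveau la coïncidence des deux produits tensoriels). Les hypothèses de la proposition $\ref{reparation}$ étant alors vérifiées, celle-ci donnera que $\mathcal{C}$ est $k$-affinoïde, et l'espace $X:=\mathcal{M(C)}$ fournira l'image inverse cherchée de la donnée $(\mathcal{D},\varphi)$.

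Enfin, la pleine fidélité du foncteur tiré en arrière se traitera de manière analogue : la descente schématique fidèlement plate fournira le morphisme d'algèbres cherché entre deux $\mathcal{A}$-algèbres affinoïdes descendues, sa continuité étant automatique par la propriété selon laquelle tout morphisme d'algèbres entre deux algèbres $k$-affinoïdes est borné. L'obstacle principal anticipé est donc concentré dans l'appel à la proposition $\ref{reparation}$, qui, par son analyse fine via les polynômes caractéristiques et la norme spectrale, contourne précisément la difficulté qu'on aurait à montrer directement l'affinoïdité de $\mathcal{C}$ ; tout le reste de la démonstration est une application formelle de la descente schématique combinée au principe de finitude.
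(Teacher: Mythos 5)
Votre esquisse suit essentiellement la même démarche que la preuve du papier : identification des produits tensoriels complétés aux produits tensoriels algébriques par finitude et noethérianité, descente fidèlement plate schématique fournissant l'égalisateur $\mathcal{C}$ muni de sa topologie induite, invariance de l'isomorphisme par $\hat{\otimes}_k k_r$, appel à la proposition \ref{reparation} pour l'affinoïdité, puis traitement de la pleine fidélité par descente schématique et continuité automatique des morphismes d'algèbres affinoïdes. La seule nuance est que le papier justifie la stabilité par $\hat{\otimes}_k k_r$ en écrivant $\mathcal{C}=\ker h$ pour un morphisme borné $h$ et en utilisant l'exactitude de $-\hat{\otimes}_k k_r$ sur les suites exactes admissibles (proposition 2.1.2 de Berkovich), plutôt que l'argument de platitude que vous invoquez, mais cela ne change pas la structure de la démonstration.
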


\begin{proof}[Démonstration]

Pour l'effectivité, considérons un morphisme plat, fini et surjectif d'espaces $k$-affinoïde $p:\mathcal{M}(\mathcal{B})\to\mathcal{M}(\mathcal{A})$ induit par un morphisme d'algèbre $k$-affinoïde $\epsilon:\mathcal{A}\to \mathcal{B}$. Considérons maintenant $\mathcal D$ une algèbre $k$-affinoïde au dessus de $\mathcal{B}$ munie de données de descente relativement au morphisme $p$, c'est à dire d'un isomorphisme de $\mathcal{B}\hat{\otimes}_\mathcal{A} \mathcal{B}$-algèbres de Banach borné $\varphi:\mathcal{D}\hat{\otimes}_{\mathcal{A}} \mathcal{B \to \mathcal{B}\hat{\otimes}_\mathcal{A} \mathcal{D}}$ vérifiant la condition de cocycle. Puisque $\mathcal{D}$ et $\mathcal{B}$ sont $k$-affinoïdes, elles sont noethériennes, et on en déduit par la proposition 3.7.2.6 de \autocite{bosch1984non} que l'on a les isomorphismes de $\mathcal{B}\otimes_\mathcal{A} \mathcal{B}$-algèbres suivants : 

$$\mathcal{B}\otimes_\mathcal{A} \mathcal{B} \simeq\mathcal{B}\hat{\otimes}_\mathcal{A} \mathcal{B},~~\mathcal{D}\otimes_\mathcal{A} \mathcal{B} \simeq \mathcal{D}\hat{\otimes}_\mathcal{A} \mathcal{B},~~\mathcal{B}\otimes_\mathcal{A} \mathcal{D} \simeq \mathcal{B}\hat{\otimes}_\mathcal{A} \mathcal{D}$$

On a aussi des isomorphismes de $\mathcal{B}\otimes_\mathcal{A}\mathcal{B}\otimes_\mathcal{A}\mathcal{B}\simeq \mathcal{B}\hat{\otimes}_\mathcal{A} \mathcal{B}\hat{\otimes}_\mathcal{A} \mathcal{B}$ algèbres similaires puisque $\mathcal{B}\otimes_\mathcal{A} \mathcal{B}$ est fini sur $A$. Cela montre que la donnée de descente $\varphi$ munit naturellement la $\mathcal{B}$-algèbre $\mathcal{D}$ d'une donnée de descente d'algèbre relativement au morphisme de schémas $\mathrm{Spec} ~\mathcal{B}\to\mathrm{Spec} ~\mathcal{A}$ dont l'isomorphisme est noté $\varphi_s$. Puisque $p$ est plate et surjective, par les propriétés rappelées en introduction, on en déduit que le morphisme d'anneau $\epsilon:\mathcal{A}\to \mathcal{B}$ est fidèlement plat. Par descente schématique des algèbres, il existe une $\mathcal{A}$-algèbre $\mathcal{C}$ telle que l'on ait un isomorphisme de $\mathcal{B}$-algèbres munies de données de descente schématiques $\psi:\mathcal{B}\otimes_\mathcal{A}\mathcal{C}\to \mathcal{D}$. De plus, l'algèbre $\mathcal{C}$ est la sous-algèbre de $\mathcal{D}$ définie par $\mathcal{C}=\{c\in \mathcal{D},\varphi(c\otimes 1)=1\otimes c\}$. C'est donc un fermé d'un espace de Banach, c'est donc une $\mathcal{A}$-algèbre de Banach. De plus, par le lemme  \cite[\href{https://stacks.math.columbia.edu/tag/033E}{Tag 033E}]{stacks-project}, puisque $\mathcal{B}\otimes_\mathcal{A}\mathcal{C}$ est noethérien, $\mathcal{C}$ est bien noethérienne, et l'on a donc encore par \autocite{bosch1984non} 3.7.2.6 un isomorphisme de $\mathcal{B}$-modules de Banach de $\mathcal{B}\otimes_\mathcal{A}\mathcal{C}$ vers $\mathcal{B}\hat{\otimes}_\mathcal{A}\mathcal{C}$. 

Le morphisme $\psi$ est un isomorphisme de $\mathcal{B}$-algèbres borné. On dispose aussi des données de descente $\varphi_r$ sur la $\mathcal{B}_r$-algèbre $\mathcal{D}_r$ relativement au morphisme $p_r$. Notons $h:\mathcal{D}\to \mathcal{B}\otimes_\mathcal{A}\mathcal{D}$ le morphisme de $\mathcal{A}$-algèbres défini par $h(d)=\varphi(d\otimes1)-1\otimes d$. Alors, par définition, $\mathcal{C}=\ker h$, et comme $h$ est un morphisme borné de $k$-algèbres de Banach, on peut vérifier que pour tout polyrayon libre $r$ non trivial, on a $\mathcal{C}\hat{\otimes}k_r=\ker h_r$, égalité qui est vérifiée pour tout morphisme de $k$-algèbres borné. Cela montre que $\mathcal{C}_r$ est bien solution du problème de descente comme $\mathcal{A}_r$-algèbre, donc on a bien un isomorphisme d'algèbres $\mathcal{C}_r{\otimes}_{\mathcal{A}_r}\mathcal{B}_r\to \mathcal{D}_r$. On est exactement dans la situation de la proposition précédente $\ref{reparation}$, qui montre que l'algèbre $\mathcal{C}$ est $k$-affinoïde, et que l'isomorphisme de $\mathcal{B}$-algèbres $\psi:\mathcal{B}\otimes_\mathcal{A}\mathcal{C}\to \mathcal{D}$ est un isomorphisme d'algèbres $k$-affinoïdes, lorsque l'on munit $\mathcal{B}\otimes_\mathcal{A} \mathcal{C}$ de sa norme de produit tensoriel complété. Cet isomorphisme de $k$-algèbre est même un isomorphisme de données de descente analytiques de $\mathcal{B}\otimes_\mathcal{A} \mathcal{C}$ muni de ses données de descente canoniques vers $\mathcal{D}$ muni de $\varphi$ puisque si l'on note $p_i:\mathcal{M}(\mathcal{B}\otimes_\mathcal{A} \mathcal{B})\to \mathcal{M}(B)$ les deux projections canoniques, il suffit de vérifier que $ \varphi\circ p_1^*\psi=p_2^*\psi$, relation vérifiée par définition du morphisme $\psi$, et puisque tous les produits tensoriels sont isomorphes commes algèbres aux produits tensoriels complétés. Cela achève de démontrer  l’effectivité du morphisme $S'\to S$ pour le pseudo-foncteur qui à un espace analytique associe l’ensemble des espaces $k$-affinoïdes au dessus de cet espace.



Pour montrer que le morphisme $S'\to S$ est un morphisme de descente pour ce pseudo-foncteur, par le lemme $\ref{mybis}$ il suffit de le démontrer pour le morphisme obtenu par changement de base $S'_r\to S_r$ avec $r$ un polyrayon rendant les espaces en jeux strictement affinoïdes, et la valeur absolue sur $k$ non triviale. On peut donc supposer $S'$ et $S$ strictement $k$-affinoïdes. Supposons donc qu'on se donne $\mathcal{C}$ et $\mathcal{D}$ deux algèbres $k$-affinoïdes au dessus de $\mathcal{A}$ munies d'un morphisme de données de descente analytique $\mathcal{D}\hat{\otimes}_A \mathcal{B}\to \mathcal{C}\hat{\otimes}_A \mathcal{B}$, alors on obtient un morphisme de données de descente schématiques relatives à $\varepsilon:\mathcal{A}\to \mathcal{B}$ (les algèbres $\mathcal{C}$ et $\mathcal{D}$ sont noethériennes, et $\varepsilon$ est fini), donc il existe un morphisme de $\mathcal{A}$-algèbres de $\mathcal{D}$ vers $\mathcal{C}$ dont le changement de base schématique induit $f$ par descente schématique. Ce morphisme est continu par le théorème 6.1.3.1 de \autocite{bosch1984non} et donc borné par 2.2.3 de \autocite{temkin2011introduction} donc c'est bien un morphisme d'algèbres ${k}$-affinoïde, solution du problème de descente pour les morphismes, et ce morphisme est unique car déterminé ensemblistement par fidèle platitude du morphisme $\varepsilon$, ce qui achève de montrer que le morphisme $S'\to S$ est de descente effectif. \qedhere

\end{proof}

Cette démonstration s'adapte immédiatement dans la situation ou l'on cherche à descendre un module cohérent au dessus de $\mathcal{M(B)}$ muni de données de descente, grâce à la proposition $\ref{equiv}$ qui permet de se ramener au cas strict non trivialement valué, et à la proposition 3.7.2.6 de \autocite{bosch1984non} qui permet immédiatement de se ramener à la situation schématique. On utilise ensuite l'équivalence de catégories entre $\mathcal{A}$-modules finis et $\mathcal{A}$-modules de Banach finis. En bref, on a démontré : 

\begin{prop} Considérons le pseudo-foncteur au dessus de la catégorie des espaces $k$-affinoïdes qui à un espace $k$-affinoïde $S=\mathcal{M(A)}$ associe la catégorie des modules cohérents au dessus de $\mathcal{M(A)}$. Alors tout morphisme fini, plat et surjectif entre espace affinoïde $S'\to S$ est un morphisme de descente effective pour ce pseudo-foncteur.
\end{prop}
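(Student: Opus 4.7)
Le plan consiste à reprendre essentiellement mot pour mot la démonstration de la proposition \ref{hyp2} en remplaçant \og algèbre $k$-affinoïde \fg{} par \og module cohérent \fg{}, les deux difficultés techniques propres à \ref{hyp2} (contrôle de la norme spectrale via les polynômes caractéristiques, recours à 6.3.3.2 de \autocite{bosch1984non}) devenant inutiles ici.

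Plus précisément, on part d'un $\mathcal{B}$-module cohérent $\mathcal{M}$ muni d'un isomorphisme $\varphi:\mathcal{M}\hat{\otimes}_\mathcal{A}\mathcal{B}\to\mathcal{B}\hat{\otimes}_\mathcal{A}\mathcal{M}$ satisfaisant la condition de cocycle. Première étape : on applique la proposition \ref{equiv} pour se ramener au cas où $\mathcal{A}$, $\mathcal{B}$ sont strictement $k$-affinoïdes et $k$ est non trivialement valué ; c'est immédiat puisque \ref{equiv} est précisément énoncée pour les modules cohérents. Deuxième étape : $\mathcal{A}$ et $\mathcal{B}$ sont noethériennes, $\mathcal{B}$ est finie sur $\mathcal{A}$, et $\mathcal{M}$ est un $\mathcal{B}$-module fini ; on peut donc invoquer la proposition 3.7.2.6 de \autocite{bosch1984non} pour identifier systématiquement les produits tensoriels complétés $\hat{\otimes}_\mathcal{A}$ aux produits tensoriels algébriques $\otimes_\mathcal{A}$ dans tous les diagrammes en jeu. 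La donnée $(\mathcal{M},\varphi)$ est alors exactement une donnée de descente au sens schématique, relativement au morphisme fidèlement plat $\varepsilon:\mathcal{A}\to\mathcal{B}$ (la fidèle platitude schématique étant rappelée dans le paragraphe \og Platitude analytique et schématique \fg).

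Troisième étape : on applique la descente fidèlement plate schématique des modules, qui fournit un $\mathcal{A}$-module $M_0=\{m\in\mathcal{M},~\varphi(m\otimes 1)=1\otimes m\}$ et un isomorphisme $M_0\otimes_\mathcal{A}\mathcal{B}\simeq \mathcal{M}$ compatible aux données de descente. Le module $M_0$ est un $\mathcal{A}$-module fini (par descente de la finitude via la fidèle platitude et finitude de $\varepsilon$), donc correspond à un unique $\mathcal{A}$-module de Banach cohérent par l'équivalence entre $\mathcal{A}$-modules finis et $\mathcal{A}$-modules de Banach finis sur une algèbre $k$-affinoïde. En ré-identifiant $\otimes_\mathcal{A}$ à $\hat{\otimes}_\mathcal{A}$ via 3.7.2.6, l'isomorphisme obtenu est un isomorphisme de modules de Banach, ce qui prouve l'effectivité. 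Pour la pleine fidélité (partie \og morphisme de descente \fg), on procède de façon analogue : la descente d'un morphisme se ramène à la descente schématique de morphismes de modules finis via 3.7.2.6, et la continuité (donc la bornitude) de l'application $\mathcal{A}$-linéaire obtenue est automatique pour des modules cohérents sur une algèbre $k$-affinoïde par 2.1.9 de \autocite{berkovich2012spectral}.

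Le seul point qui demanderait un minimum de soin est la vérification que la structure de Banach obtenue sur $M_0$ (a priori comme fermé d'un module de Banach) coïncide bien avec celle issue de l'équivalence \og $\mathcal{A}$-modules finis $\leftrightarrow$ $\mathcal{A}$-modules de Banach finis \fg ; mais par unicité de la topologie canonique sur un module cohérent au dessus d'une algèbre $k$-affinoïde, ces deux structures s'identifient canoniquement. Contrairement à la situation de la proposition \ref{hyp2}, il n'y a ici aucune subtilité de type \og strictement affinoïde \fg{} à régler après descente, puisque la finitude d'un module sur une algèbre $k$-affinoïde suffit à lui conférer automatiquement une structure de module de Banach cohérent.
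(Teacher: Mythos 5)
Votre démonstration est correcte et suit exactement le chemin que le texte indique pour cette proposition : réduction au cas strict non trivialement valué via la proposition \ref{equiv}, identification des produits tensoriels complétés aux produits tensoriels algébriques par 3.7.2.6 de \autocite{bosch1984non} pour se ramener à la descente fidèlement plate schématique, puis utilisation de l'équivalence entre $\mathcal{A}$-modules finis et $\mathcal{A}$-modules de Banach finis. Vous avez en outre raison d'observer que les points délicats de la proposition \ref{hyp2} (polynômes caractéristiques, norme spectrale) disparaissent ici.
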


Maintenant, puisqu'on peut toujours recoller des modules cohérents selon la $G$-topologie par $\ref{base}$, par la proposition précédente et la proposition $\ref{oridu}$ on en déduit que l'on est en situation d'appliquer le théorème $\ref{fibfi}$ pour obtenir : 

\begin{prop} Considérons le pseudo-foncteur au dessus de la catégorie des espaces $k$-affinoïdes qui à un espace $k$-affinoïde $S=\mathcal{M(A)}$ associe la catégorie des modules cohérents au dessus de $\mathcal{M(A)}$. Alors tout morphisme fidèlement plat entre espaces affinoïdes est un morphisme de descente effective pour ce pseudo-foncteur.
\end{prop}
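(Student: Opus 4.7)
Le plan est d'appliquer directement le théorème \ref{fibfi} au pseudo-foncteur $\Psi$ qui, à un espace $k$-affinoïde $S=\mathcal{M(A)}$, associe la catégorie des modules cohérents au dessus de $\mathcal{M(A)}$. Il suffit donc de vérifier séparément les trois hypothèses apparaissant dans l'énoncé de ce théorème, ce qui a déjà été fait au cours de la section.

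Premièrement, l'hypothèse $(1)$ sur les extensions de polyrayon $\mathcal{M}(\mathcal{A}_r)\to\mathcal{M(A)}$ est exactement le contenu de la proposition \ref{oridu}. Deuxièmement, l'hypothèse $(2)$ sur les morphismes finis, plats et surjectifs $\mathcal{M(B)}\to\mathcal{M(A)}$ est la proposition démontrée juste avant l'énoncé, obtenue en ramenant la descente analytique à la descente schématique via les isomorphismes $\mathcal{M}\otimes_\mathcal{A}\mathcal{B}\simeq \mathcal{M}\hat{\otimes}_\mathcal{A}\mathcal{B}$ (corollaire 3.7.3.6 de \autocite{bosch1984non}) et en utilisant l'équivalence entre $\mathcal{A}$-modules finis et $\mathcal{A}$-modules de Banach finis. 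Troisièmement, l'hypothèse $(3)$ sur les $G$-recouvrements finis par des domaines affinoïdes est la proposition \ref{hyp3}, due à Berkovich.

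Les trois hypothèses étant vérifiées, l'invocation du théorème \ref{fibfi} fournit immédiatement la conclusion recherchée.

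Il n'y a, à ce stade, aucune réelle difficulté à surmonter, puisque tout le contenu conceptuel et technique a été concentré en amont : d'une part dans la preuve du théorème \ref{fibfi} lui-même, où les réductions successives (au cas strict non trivialement valué via une extension de polyrayon, au cas quasi-fini via l'existence de multisections plates, au cas quasi-étale via le théorème de factorisation \textit{à la} Zariski de Ducros, puis à une situation locale pour la $G$-topologie via un revêtement galoisien dominant) reposent crucialement sur le lemme \ref{my} ; et d'autre part dans la vérification de l'hypothèse $(2)$, où la proposition \ref{reparation} exige de contrôler les coefficients du polynôme caractéristique d'un élément d'algèbre $k$-affinoïde pour pouvoir invoquer la proposition 6.3.3.2 de \autocite{bosch1984non}. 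La présente proposition n'est donc que la synthèse de ces différents ingrédients déjà établis.
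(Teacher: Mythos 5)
Votre démonstration est correcte et suit exactement la même voie que celle du texte : vérifier les trois hypothèses du théorème \ref{fibfi} au moyen de la proposition \ref{oridu} pour les extensions de polyrayon, de la proposition sur le cas fini, plat et surjectif (obtenue par réduction à la descente schématique via l'identification du produit tensoriel et du produit tensoriel complété), et de la proposition \ref{hyp3} pour les $G$-recouvrements finis. Rien à redire sur le fond.
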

\begin{rema} En globalisant, on obtient l'énoncé suivant, qui concerne les morphismes plats proprement surjectifs. Cette hypothèse couvre en particulier : 
\begin{enumerate}
    \item Les morphismes plats, surjectifs et topologiquement propres, et en particulier les morphismes plats et surjectifs entre espaces compacts.
    \item Les morphismes plats, surjectifs et sans bord. En effet, si $f:Y\to X$ est plat, surjectif et sans bord, alors par \autocite{ducros2017families}, 9.2.3, le morphisme $f$ est ouvert, et si l'on se donne $Y=\bigcup_{i\in I} Y_i$ un $G$-recouvrement de $Y$ par des domaines affinoïdes alors $X=\bigcup_{i\in I} f(Y_i)$ est un $G$-recouvrement de $X$ par des domaines analytiques quasi-compacts.
\end{enumerate}
Précisons que le théorème $\ref{pf}$ sera aussi obtenu sous ces mêmes hypothèses.
\end{rema}

\begin{theo} \label{Coh} Considérons le pseudo-foncteur au dessus de la catégorie des espaces $k$-analytiques qui à un espace $k$-analytique $S$ associe la catégorie des modules cohérents au dessus de $S$. Alors tout morphisme plat et proprement surjectif $p:S'\to S$ est un morphisme de descente effective . 
\end{theo}
\begin{proof}[Démonstration]
On se place d'abord dans le cas particulier où le morphisme $p$ est topologiquement propre. L'assertion étant $G$-locale sur $S$, on peut donc supposer $S$ affinoïde et $S'$ quasi-compact, $G$-recouvert par un nombre fini de domaines affinoïde $S'=\bigcup S'_i$. On a un diagramme :

$$\xymatrix{
    S'  \ar[r]  & S  \\
    \amalg_{i\in I} S'_{i} \ar[ru] \ar[u] & 
  }$$ 

Maintenant, la flèche $\amalg_{i\in I} S'_i\to S$ est plate surjective entre espaces $k$-affinoïdes donc de descente effective par la proposition précédente. Maintenant, puisque $\amalg_{i\in I} S'_i\to S'$ est un $G$-recouvrement fini, il est de descente effectif, donc par le lemme $\ref{my}$, le morphisme $p:S'\to S$ est de descente effectif.

Maintenant, si le morphisme $p$ est juste supposé plat et proprement surjectif, alors par hypothèse il existe un $G$-recouvrement $S=\bigcup_{i\in I} S_i$ de $S$ par des domaines analytiques quasi-compacts et des domaines analytiques $S'_i\subset S'$ quasi-compacts tels que $p(S'_i)=S_i$. On se donne un $G$-recouvrement de $p^{-1}(S_i)$ par des domaines affinoïdes $R'_{ij}\subset S'$. Alors on dispose du diagramme commutatif suivant : 

$$\xymatrix{
    \coprod_{ij} (S'_i\amalg R'_{ij}) \ar[d]  \ar[r]  & \coprod_{ij} S_i \ar[d]  \\
    S' \ar[r]  & S
  }$$ 
  
Maintenant, par le lemme $\ref{my}$, puisque les deux flèches verticales sont de descente effective universellement en tant que $G$-recouvrement, pour montrer que $p$ est de descente effective, il suffit de monter que la composée $\coprod_{ij} (S'_i\amalg R'_{ij})\to S$ est de descente effective, puis de monter que la flèche $\coprod_{ij} (S'_i\amalg R'_{ij})\to \coprod S_i$ est de descente effective universellement par une nouvelle application de ce lemme. La flèche $S'_i\amalg R'_{ij}\to S_i$ étant plate, surjective et topologiquement propre, et ces faits étant invariant par changement de base, par le paragraphe précédent, elle est de descente effective universellement, et donc $p$ est aussi de descente effective, ce qui permet de conclure.
\end{proof}

On peut aussi appliquer le théorème $\ref{fibfibis}$ grâce aux proposition $\ref{hyp1}$, $\ref{hyp2}$ et $\ref{hyp3}$ pour obtenir l'énoncé suivant :  
\begin{prop}  \label{gen}
Considérons le pseudo-foncteur $\Phi$ au dessus de la catégorie des espaces $k$-affinoïdes qui à un espace affinoïde $S=\mathcal{M(A)}$ associe la catégorie des espaces $k$-affinoïdes au dessus de $S$. Alors tout morphisme fidèlement plat entre espaces $k$-affinoïdes est un morphisme de descente pour $\Phi$, c'est à dire que tout morphisme fidèlement plat entre espaces $k$-affinoïde $p:S'\to S$ induit un foncteur tiré en arrière qui est pleinement fidèle de la catégorie des espaces $k$-affinoïdes au dessus de $S$ vers la catégorie des espaces $k$-affinoïdes au dessus de $S'$ munis de données de descente relativement au morphisme $p$.
\end{prop}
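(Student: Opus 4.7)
The plan is to apply Théorème \ref{fibfibis} directly to the pseudo-foncteur $\Phi$, since the three hypotheses required by that theorem have already been established (in the strictly stronger form of effective descent) earlier in this section.

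More precisely, I would argue as follows. The assertion of Théorème \ref{fibfibis} is that any faithfully flat morphism between $k$-affinoid spaces is a descent morphism for $\Phi$ provided that: (i) every polyradius extension $\mathcal{M}(\mathcal{A}_r) \to \mathcal{M}(\mathcal{A})$ with $r$ a $k$-free polyradius is a descent morphism for $\Phi$; (ii) every finite, flat, surjective morphism between $k$-affinoid spaces is a descent morphism for $\Phi$; (iii) every finite $G$-covering $\amalg_{i=1}^n S_i \to S$ by affinoid domains is a descent morphism for $\Phi$. Condition (i) is given by Proposition \ref{hyp1}, which asserts the stronger property of effective descent. Condition (ii) is given by Proposition \ref{hyp2}, again in the form of effective descent. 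Condition (iii) is given by Proposition \ref{hyp3}. Since effective descent implies descent (a fully faithful equivalence is in particular fully faithful), the three hypotheses of Théorème \ref{fibfibis} are satisfied, and the conclusion follows immediately.

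There is no real obstacle here: all the substantive work has been done in the three preceding propositions (in particular the delicate computation of Proposition \ref{hyp1} using the Shilov section, the application of Proposition \ref{reparation} inside Proposition \ref{hyp2}, and the $G$-local gluing of Proposition \ref{hyp3}). The proof of Proposition \ref{gen} itself is therefore little more than an invocation of Théorème \ref{fibfibis}. If one wanted a slightly more self-contained argument, one could unpack what the descent property means here: given two $\mathcal{A}$-affinoid algebras $\mathcal{C}, \mathcal{D}$ and a morphism of descent data $\mathcal{D} \hat{\otimes}_\mathcal{A} \mathcal{B} \to \mathcal{C} \hat{\otimes}_\mathcal{A} \mathcal{B}$, one must produce a unique $\mathcal{A}$-morphism $\mathcal{D} \to \mathcal{C}$ inducing it; but this is exactly what is shown in the last paragraph of the proof of Proposition \ref{hyp2} (reduction to the strict non-trivially valued case via polyradius extension, then reduction to schematic descent via Proposition 3.7.2.6 of \autocite{bosch1984non}, then automatic continuity via 6.1.3.1 of \autocite{bosch1984non}), globalized by the finite $G$-covering case of Proposition \ref{hyp3} through the lemma \ref{mybis}. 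This is precisely the combinatorial content packaged into Théorème \ref{fibfibis}, so invoking that theorem is the cleanest presentation.
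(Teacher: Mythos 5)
Votre preuve est correcte et suit exactement la démarche du papier, qui déduit la proposition de l'application directe du théorème \ref{fibfibis} aux propositions \ref{hyp1}, \ref{hyp2} et \ref{hyp3}. Notez seulement que pour la condition (iii), la proposition \ref{hyp3} ne fournit pour $\Phi$ que la propriété de descente (et non la descente effective), mais c'est précisément ce qu'exige le théorème \ref{fibfibis}, donc l'argument tient.
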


On rappelle qu'en gardant les notations de la proposition précédente, il est équivalent d'être un morphisme de descente pour le pseudo-foncteur $\Phi$ et d'être un épimorphisme effectif universel dans la catégorie des espaces affinoïdes.
On va maintenant globaliser l'énoncé précédent grâce aux deux lemmes suivants :

\begin{lemm} \label{autore} Considérons un morphisme $S'\to S$ entre deux espaces affinoïdes qui est un épimorphisme effectif universel dans la catégorie des espaces $k$-affinoïdes. Alors c'est un épimorphisme effectif dans la catégorie des espaces $k$-analytiques.
\end{lemm}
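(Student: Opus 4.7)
Plan de preuve. Il s'agit de montrer que pour tout espace $k$-analytique $X$, le diagramme $\Hom(S, X) \to \Hom(S', X) \rightrightarrows \Hom(S'', X)$ est exact, autrement dit que tout morphisme $f: S' \to X$ vérifiant $f \circ p_1 = f \circ p_2$ se factorise de manière unique à travers $p$. On commence par définir l'application ensembliste sous-jacente : pour tout $s \in S$ et tous $s'_1, s'_2 \in p^{-1}(s)$, le couple $(s'_1, s'_2)$ appartient à $S''$, et la condition $f \circ p_1 = f \circ p_2$ entraîne $f(s'_1) = f(s'_2)$. On peut donc poser $|g|(s) := f(s')$ pour un $s' \in p^{-1}(s)$ quelconque. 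Puisque $|p|$ est une surjection continue entre espaces topologiques compacts et séparés, c'est une application quotient, et $|g|$ est alors continue.

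Pour promouvoir $|g|$ en morphisme d'espaces analytiques, l'observation clé est que $p$ étant un morphisme d'espaces affinoïdes, l'image réciproque $p^{-1}(T)$ de tout domaine affinoïde $T \subset S$ est un domaine affinoïde de $S'$, et le morphisme induit $p^{-1}(T) \to T$, étant un changement de base de $p$, reste un épimorphisme effectif universel dans la catégorie affinoïde. Pour tout $s \in S$, on choisit un domaine affinoïde $V \subset X$ qui est un voisinage topologique de $|g|(s)$ (quitte à raffiner par un $G$-recouvrement affinoïde, on se ramène au cas d'un seul domaine affinoïde), puis par continuité de $|g|$ un voisinage affinoïde $T \subset S$ de $s$ tel que $|g|(T) \subset V$. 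La restriction $f|_{p^{-1}(T)}: p^{-1}(T) \to V$ vérifie encore la condition de compatibilité héritée de $f$ (les produits fibrés se restreignent bien, puisque $p^{-1}(T) \times_T p^{-1}(T) = S'' \times_S T$), et la propriété d'épi effectif universel appliquée à la cible affinoïde $V$ fournit un unique morphisme $g_T: T \to V \hookrightarrow X$ tel que $g_T \circ p|_{p^{-1}(T)} = f|_{p^{-1}(T)}$.

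Il reste à recoller ces morphismes locaux en un morphisme global $g: S \to X$ satisfaisant $g \circ p = f$. Sur l'intersection de deux voisinages affinoïdes $T, T'$ ainsi construits, tout point admet un voisinage affinoïde $T''$ dans $T \cap T'$ tel que $g_T(T'')$ et $g_{T'}(T'')$ soient contenus dans un même domaine affinoïde $V''$ de $X$ (ce qui est possible car $g_T$ et $g_{T'}$ coïncident ensemblistement avec $|g|$ sur leurs domaines respectifs) ; l'unicité dans la propriété d'épi effectif universel appliquée à $V''$ et au changement de base $p^{-1}(T'') \to T''$ donne alors $g_T|_{T''} = g_{T'}|_{T''}$, et l'on obtient par recollement le morphisme $g$ cherché. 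L'unicité globale de $g$ se démontre par le même argument local. La difficulté principale est d'orchestrer correctement les recouvrements affinoïdes de $S$ et $X$ pour appliquer la propriété d'unicité à chaque étape, mais le fait que $|g|$ soit déjà bien définie et continue avant toute construction analytique rend ce choix possible sans circularité.
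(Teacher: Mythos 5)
Votre stratégie générale --- définir d'abord l'application ensembliste $|g|$ grâce à la surjectivité du produit fibré analytique sur le produit fibré topologique, vérifier sa continuité par submersivité de $|p|$, puis factoriser localement à travers des cibles affinoïdes et recoller --- est la bonne et rejoint celle du texte. Mais il y a un vrai trou à l'étape clé : vous choisissez, pour chaque $s\in S$, \emph{un} domaine affinoïde $V\subset X$ qui soit un voisinage topologique de $|g|(s)$. Un point d'un espace $k$-analytique général n'admet pas de voisinage affinoïde (seuls les \emph{bons} espaces ont cette propriété) ; en général $|g|(s)$ n'a qu'un voisinage de la forme $V_1\cup\dots\cup V_n$ avec les $V_i$ affinoïdes. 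Votre remarque parenthétique suggérant de se ramener à un seul domaine affinoïde en raffinant par un $G$-recouvrement ne règle pas le problème : si l'on remplace la cible par un tel recouvrement fini, les morceaux pertinents de la source sont les domaines affinoïdes $f^{-1}(V_i)$ de $S'$ (qui ne sont pas des ouverts), et pour pouvoir appliquer la propriété d'épimorphisme effectif morceau par morceau il faut d'abord les faire descendre en des domaines analytiques de $S$, c'est-à-dire montrer que $p(f^{-1}(V_i))$ est un domaine analytique compact de $S$. C'est précisément le point technique central de la preuve du texte, qui invoque pour cela le théorème 9.2.1 de \autocite{ducros2017families} (l'image d'un domaine analytique compact par un morphisme plat est un domaine analytique compact), et qui utilise donc la platitude de $p$, absente de votre argument. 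Le même problème réapparaît dans votre étape de recollement, où vous supposez que $g_T(T'')$ et $g_{T'}(T'')$ sont contenus dans un même domaine affinoïde de $X$.

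En dehors de ce point, vos réductions sont correctes : $p^{-1}(T)$ est bien un domaine affinoïde de $S'$, le morphisme $p^{-1}(T)\to T$ reste un épimorphisme effectif universel par changement de base, et le recollement final selon un $G$-recouvrement fini de $S$ par des domaines affinoïdes est licite puisque $\Hom(-,X)$ est un faisceau pour la $G$-topologie. Si $X$ est bon, votre démonstration fonctionne telle quelle ; pour le cas général, il faut incorporer la descente du recouvrement affinoïde de la cible le long de $p$, comme le fait la preuve du texte.
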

\begin{proof}[Démonstration]
Considérons un épimorphisme effectif universel $p:S'=\mathcal{M(B)\to S=M(A)}$ dans la catégorie des espace affinoïdes. On se donne un morphisme $g:\mathcal{M(B)}\to X$ vers un espace analytique $X$ vérifiant $g\circ p_1=g\circ p_2$, avec $p_i$ les projections canoniques $p_i:\mathcal{M(B\hat{\otimes}_A B})\to \mathcal{M(B)} $.  Alors on veut montrer qu'il existe un unique morphisme $f:\mathcal{M(A)}\to X$ vérifiant $f\circ p=g$.

Pour cela, on peut supposer que l'espace $X$ est quasi-compact. En effet, si ce n'est pas le cas, pour tout pour $s'\in S'$, on se donne $(X_{ij})_{j\in J_i}$ des domaines affinoïdes en nombre fini dans $X$ tel que $g(s')$ soit dans chacun des $X_{ij}$ et que l'union  $\bigcup_{j\in J}X_{ij}$ contienne un voisinage $O_i$ de $g(s')$. Alors l'union des $O_i$ est un recouvrement ouvert du compact $g(S')$. On en extrait un sous-recouvrement fini. Alors il existe un nombre fini de $X_{ij}$ qui recouvre l'image $g(S')$. Notons $X'$ cette union finie. C'est une partie quasi-compacte de $X$ qui est un domaine analytique de $X$, $G$-recouvert par les $X_{ij}$ en nombre fini, et le morphisme $g:S'\to X$ se factorise par $g':S'\to X'$, et le morphisme $g'$ commute encore aux projections. On s'est donc ramené au cas où l'espace $X$ est quasi-compact car si l'on a l'existence et l'unicité du morphisme de $S$ vers $X'$ pour tout $X'$ quasi-compact et commutant aux projections, on a l'existence et l'unicité du morphisme de $S$ vers $X$ car celui-ci se factorise par un domaine analytique quasi-compact.

Maintenant, on se donne un $G$-recouvrement $X_i$ de $X$ par un nombre fini de domaines affinoïdes, et l'on note $j:\amalg_i X_i\to X$ le morphisme induit.
Alors $p(g^{-1}(X_i))$ est l'image par un morphisme plat surjectif d'un domaine analytique compact, c'est donc un domaine analytique compact de $S$ par la proposition 9.2.1 de \autocite{ducros2017families}. On se donne donc un $G$-recouvrement fini de ce domaine analytique $p(g^{-1}(X_i))=\cup_w S_{iw}$. Notons $S'_{iw}=p^{-1}(S_{iw})$. Alors on dispose d'un épimorphisme effectif universel dans la catégorie des espaces $k$-affinoïdes $p':\coprod_{iw}S'_{iw}\to \coprod_{iw} S_{iw}$, ainsi que d'un morphisme $g':\coprod_{iw} S'_{iw}\to\coprod_i X_i$ induit par $g$ (si on se donne $x\in S'_{iw}$, il existe $s'\in g^{-1}(X_i)$ tel que $p(x)=p(s')$, mais alors le couple $(x,s')$ nous fournit un point de $S'\times_S S'$, et puisque $g$ fait commuter le diagramme idoine, on obtient $g(s')=g(x)\in X_i$), à destination d'un espace affinoïde.  On résume la situation par le diagramme qui suit.

$$\xymatrix{
     & & \amalg_{iwj\tau} S'_{iw}\cap S'_{j\tau} \ar[rr]^{p^{\prime \prime}}\ar@<-.5ex>[d] \ar@<.5ex>[d] & & \amalg_{iwj\tau} S_{iw}\cap S_{j\tau}\ar@<-.5ex>[d]|-{r_1} \ar@<.5ex>[d]|-{r_2} \\
    \amalg_{iw}{S'}_{iw}\times_{S_{iw}} {S'}_{iw}\ar[dd] \ar@<-.5ex>[rr] \ar@<.5ex>[rr]  &  & \amalg_{iw}{S'}_{iw} \ar[dd]^{r'}\ar[rr]^{p'}\ar[rd]^{g'} & &  \amalg_{iw}{S}_{iw}\ar[dd]^r  \ar@{.>}[dl]|-{f'}\\
     & & &  \amalg_i X_i\ar[ldd]_j & \\
     S''  \ar@<-.5ex>[rr] \ar@<.5ex>[rr] & & S' \ar[d] \ar[rr]^p& & S  \ar@{.>}[dll]|-{f} \\
      & &  X & & 
  }$$

On vérifie que le morphisme $g'$ commute aux deux projections $p'_i:\coprod_{iw} S'_{iw}\times_{S_{iw}} S'_{iw}\to \coprod_{iw} S'_{iw}$ (en vérifiant que ça commute sur chacun des ouverts fermés $S'_{iw}\times_{S_{iw}} S'_{iw}$ de $\coprod S'_{iw}\times_{S_{iw}} S'_{iw}=\coprod S'_{iw}\times_{\coprod S_{iw}} \coprod S'_{iw}$) et puisque $p'$ est un épimorphisme effectif universel dans la catégorie des espaces $k$-affinoïdes, on obtient l'existence d'un unique morphisme $f':\coprod_{iw} S_{iw}\to \coprod_i X_i$ qui vérifie la relation $f'\circ p'=g'$. Maintenant, on vérifie que le morphisme obtenu par composition $j\circ f':\coprod S_{iw}\to X$ provient d'un morphisme de $S\to X$. Il faut pour cela remarquer que le morphisme $p^{\prime \prime}:\coprod_{iw j\tau} S'_{iw}\cap S'_{j\tau}\to \coprod_{iw j\tau} S_{iw}\cap S_{j\tau}$ est encore un épimorphisme effectif universel dans la catégorie des espaces $k$-affinoïdes comme changement de base d'un tel morphisme donc c'est un épimorphisme, et vérifier que si $r:\amalg_{iw} S_{iw}\to S$ désigne le $G$-recouvrement, et $r_i:\coprod_{iw j\tau} S_{iw}\cap S_{j\tau} \to \amalg_{iw}S_{iw}$ chacune des deux projections canoniques, alors on a ${j\circ f' \circ r_1 \circ p^{\prime \prime }=j\circ f' \circ r_2 \circ p^{\prime \prime}}$, donc $j\circ f' \circ r_1=j\circ f' \circ r_2$, ce qui fournit alors l'existence d'un unique morphisme $f:S\to X$ vérifiant $f\circ r=j\circ f'$. Maintenant, la flèche $r':\amalg_{iw} S'_{iw}\to S'$ est un épimorphisme d'espaces $k$-analytiques, et l'on a l'égalité $f\circ p \circ r'=f\circ r \circ p'=j\circ f'\circ p'=j\circ g'=g\circ r'$, ce qui montre que $f$ vérifie bien la relation $f\circ p=g$, et $p$ est bien un épimorphisme effectif dans la catégorie des espaces $k$-analytiques, ce qui conclut la démonstration. \qedhere

\end{proof}

\begin{lemm} \label{protop} Soit $P$ une propriété de morphismes d'espaces analytiques (eg. plat) stable par changement de base et stable par restriction à un domaine affinoïde à droite et à gauche (si on se donne $f:Y\to X$ vérifiant $P$, et un domaine affinoïde $V\subset X$ resp. $U\subset Y$ vérifiant $f(U)\subset V$, alors le morphisme induit $f:U\to V$ possède $P$). On suppose que les morphismes entre espaces affinoïdes vérifiant P et surjectifs sont des épimorphismes effectifs dans la catégorie des espaces analytiques. Alors tout morphisme entre espaces analytiques vérifiant P, surjectif et propre topologiquement est un épimorphisme effectif dans la catégorie des espaces $k$-analytiques.
\end{lemm}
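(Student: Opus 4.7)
Mon plan est de procéder en deux étapes : une réduction au cas où la base $X$ est affinoïde, puis le traitement du cas affinoïde en couvrant la source $Y$ par un nombre fini de domaines affinoïdes dont la réunion disjointe reste un espace affinoïde, afin de pouvoir appliquer directement l'hypothèse sur les épimorphismes effectifs entre espaces $k$-affinoïdes.

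Soit $f : Y \to X$ vérifiant $P$, surjective et topologiquement propre, et soit $g : Y \to Z$ un morphisme vers un espace $k$-analytique $Z$ tel que $g \circ p_1 = g \circ p_2$, où $p_i : Y \times_X Y \to Y$ désignent les projections. Je fixe un $G$-recouvrement $(X_\alpha)$ de $X$ par des domaines affinoïdes. Par stabilité de $P$ au changement de base, chaque morphisme $f_\alpha : f^{-1}(X_\alpha) \to X_\alpha$ vérifie $P$, reste surjective et topologiquement propre. Une fois le cas affinoïde traité, l'on obtient pour chaque $\alpha$ un unique $h_\alpha : X_\alpha \to Z$ tel que $h_\alpha \circ f_\alpha = g|_{f^{-1}(X_\alpha)}$. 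L'unicité, appliquée à un $G$-raffinement par des domaines affinoïdes des intersections $X_\alpha \cap X_\beta$, entraîne la compatibilité de la famille $(h_\alpha)$, ce qui permet le recollement en un unique morphisme $h : X \to Z$ avec $h \circ f = g$.

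On suppose donc $X$ affinoïde. Comme $f$ est topologiquement propre, $Y = f^{-1}(X)$ est quasi-compact, et je choisis un $G$-recouvrement fini $(Y_i)_{i=1}^n$ de $Y$ par des domaines affinoïdes. Le point clé est que $Y' := \coprod_{i=1}^n Y_i$ est encore un espace $k$-affinoïde (son algèbre de fonctions étant le produit fini $\prod_i \mathcal{O}(Y_i)$), et que le morphisme composé $\pi : Y' \to Y \xrightarrow{f} X$ vérifie $P$ : chaque restriction $Y_i \to X$ vérifie $P$ par stabilité de $P$ à la restriction à un domaine affinoïde à la source, et cette propriété se transmet composante par composante au coproduit. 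De plus $\pi$ est surjective, donc par hypothèse, c'est un épimorphisme effectif dans la catégorie des espaces $k$-analytiques.

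Il reste à vérifier que $g$ se factorise à travers $\pi$. Le morphisme composé $g' := g \circ (Y' \to Y) : Y' \to Z$ vérifie $g' \circ q_1 = g' \circ q_2$ pour les projections $q_i : Y' \times_X Y' \to Y'$, grâce au morphisme canonique $Y' \times_X Y' \to Y \times_X Y$ et à la condition sur $g$. La propriété d'épimorphisme effectif de $\pi$ fournit alors un unique $h : X \to Z$ tel que $h \circ \pi = g'$. Pour conclure $h \circ f = g$, on restreint à chaque $Y_i$ : par construction, $(h \circ f)|_{Y_i} = h \circ \pi|_{Y_i} = g'|_{Y_i} = g|_{Y_i}$, et comme $(Y_i)$ est un $G$-recouvrement de $Y$, on obtient $h \circ f = g$ globalement, l'unicité globale de $h$ découlant de l'unicité dans la propriété d'épimorphisme effectif pour $\pi$. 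L'obstacle principal sera la gestion soignée du recollement à l'étape de réduction, où il faudra s'assurer que l'unicité fournie par le cas affinoïde permet bien la compatibilité des $h_\alpha$ sur un $G$-raffinement affinoïde des intersections, ainsi que la vérification que la propriété $P$ passe convenablement au coproduit fini d'espaces affinoïdes, ce qui est automatique pour la propriété d'être plat qui constituera l'application principale.
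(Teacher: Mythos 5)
Votre preuve est correcte et suit essentiellement la même stratégie que celle de l'article : réduction au cas d'une base affinoïde via un $G$-recouvrement et le fait que $\Hom(-,Z)$ est un faisceau pour la $G$-topologie, puis application de l'hypothèse au morphisme $\coprod_i Y_i\to X$ entre espaces affinoïdes obtenu à partir d'un recouvrement affinoïde fini de la source (quasi-compacte par propreté topologique), la compatibilité sur les intersections s'obtenant par l'unicité. La seule différence est de présentation (l'article formule l'argument comme une chasse au diagramme sur les suites exactes d'ensembles $F(S)\to F(S')\rightrightarrows F(S'\times_S S')$), et le point que $P$ passe au coproduit fini, que vous signalez vous-même, est implicite de la même façon dans la preuve originale.
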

\begin{proof}[Démonstration]
Soit donc $p:S'\to S$ surjectif, propre topologiquement et vérifiant la propriété $P$. On va montrer que $p$ est un épimorphisme effectif. Soit donc $X$ un espace $k$-analytique général. Notons $F$ le foncteur représenté par $X$. On veut montrer que l'on a une suite exacte $0\to F(S)\to F(S')\rightrightarrows F(S'\times_S S')$, où dans la suite du texte la notation $0\to F(S)\to F(S')$ signifie que la flèche $F(S)\to F(S')$ est injective. 

On se donne d'abord $S=\cup_{i\in I} S_i$ un recouvrement de $S$ par des affinoïdes. On se donne un recouvrement fini $p^{-1}(S_i)=\cup_{l\in L_i} S'_{ij}$ de $p^{-1}(S_i)$ par un nombre fini d'affinoïdes. On en déduit un diagramme :

$$\xymatrix{
    S'  \ar[r]  & S  \\
    \amalg_{i\in I, l\in L_i} S'_{il} \ar[u] \ar[r] & \amalg_{i\in I} S_i \ar[u]
  }$$
  
Comme $U\mapsto F(U)$ est un faisceau pour la $G$-topologie par la proposition 1.3.2 de \autocite{PMIHES_1993__78__5_0}, on a des suites exactes : $$0\to F(S)\to \prod_{i\in I} F(S_i)\to \prod_{i,j\in I^2} F(S_i\cap S_j)$$ 
 $$0\to F(S')\to \prod_{i\in I,l\in L_i} F(S'_{il})\to \prod_{i,j\in I^2 l,k\in L_i\times L_j} F(S'_{il}\cap S'_{jk})$$

On montre d'abord l'unicité du recollement, c'est à dire l'exactitude de la partie gauche. On a un diagramme :
$$\xymatrix{& 0\ar[d] & \\
     0\ar[r]& F(S) \ar[r] \ar[d]  & F(S') \ar[d] \\
   0 \ar[r]& \prod_{i\in I} F(S_i)  \ar[r] & \prod_{i\in I,l\in L_i} F(S'_{il} )
  }$$
Par le cas affinoïde, la flèche horizontale en bas est injective. Par le paragraphe précédent, la flèche verticale gauche est injective. Par chasse au diagramme, on en déduit que la première ligne est injective.

Pour l'exactitude au milieu, on va utiliser le cas affinoïde, l'unicité démontrée pour un morphisme surjectif vérifiant $P$, et le recollement selon des domaines analytiques. 

On le diagramme suivant : 
$$\xymatrix{
     &0\ar[d] & 0\ar[d]&  \\
     0\ar[r]&F(S)\ar[r]\ar[d] & F(S')\ar@<-.5ex>[r] \ar@<.5ex>[r] \ar[d] & F(S'\times_S S')\ar[d] \\
    0 \ar[r]&  \prod_{i\in I} F(S_i) \ar[r]\ar@<-.5ex>[d] \ar@<.5ex>[d]&\prod_{i\in I}( F(\amalg_{l\in L_i}S'_{il}))\ar@<-.5ex>[r] \ar@<.5ex>[r] \ar@<-.5ex>[d] \ar@<.5ex>[d]& \prod_{i\in I}(F(\amalg_{l\in L_i}S'_{il}\times_{S_i}\amalg_{k\in L_i}S'_{ik}))) \\
    0\ar[r]&\prod_{i,j\in I^2} F(S_i\cap S_j)\ar[r]  & \prod_{i,j\in I^2}\prod_{l,k\in L_j\times L_k}F(S'_{il}\cap S'_{jk})
  }$$
  
On veut montrer que la première ligne est exacte. Les deux colonnes du milieu sont exactes parce que $U\mapsto F(U)$ est un faisceau pour la $G$-topologie. La deuxième ligne est exacte par le cas affinoïde, puisque les $L_i$ sont en nombre finis. La dernière ligne est exacte par ce qui précède puisque le morphisme $h:\amalg_{i,j\in I^2}\amalg_{k,l\in L_i\times L_j}S'_{il}\cap S'_{jk}\to \amalg_{i,j\in I^2}S_i\cap S_j$ est topologiquement propre, vérifie $P$ et est surjectif.

Par chasse au diagramme, la première ligne est exacte, ce qui achève la preuve.\end{proof}

On peut maintenant énoncer et  démontrer la globalisation de $\ref{gen}$. Remarquons que ce théorème couvre le cas des morphismes plats, surjectifs et topologiquement propres, ainsi que le cas des morphismes plats, surjectifs et sans bord.

\begin{theo} \label{pf} \label{final} Considérons le pseudo-foncteur $\Psi$ qui à un espace analytique $S$ associe la catégorie des espaces $k$-analytiques au dessus de $S$. Alors les morphismes plats et proprement surjectifs sont des morphismes de descente.

Autrement dit, les morphismes plats et proprement surjectifs sont des épimorphismes effectifs universels de la catégorie des espaces analytiques ; autrement dit, si l'on se donne un morphisme plat et proprement surjectif $p:S'\to S$ entre espaces analytiques, alors le foncteur de la catégorie des $S$-espaces analytiques vers la catégorie des $S'$-espaces analytiques munis de données de descente relativement au morphisme $p$ est pleinement fidèle.
\end{theo}

\begin{proof}[Preuve] Par $\ref{gen}$, les morphismes plats surjectifs entre espaces affinoïdes sont des épimorphismes effectifs universels dans la catégorie des espaces affinoïdes. Par $\ref{autore}$, ce sont des épimorphismes effectifs dans la catégorie des espaces analytiques, et on peut appliquer $\ref{protop}$ pour obtenir que les morphismes plats, propres topologiquement et surjectifs sont des épimorphismes effectifs, et même effectifs universels car les hypothèses sont stables par changement de base. De plus, les épimorphismes effectifs universels de la catégorie des espaces $k$-analytiques sont exactements les morphismes de descente pour le pseudo-foncteur $\Psi$ par les résultats de \autocite{grothendieck1963schemas}, donc les morphismes plats, surjectifs et topologiquement propres sont des morphismes de descente universellement pour $\Psi$ puisque ces hypothèses sont stables par changement de base.

Maintenant, si l'on se donne $p:S'\to S$ un morphisme plat et proprement surjectif, alors on procède comme en $\ref{Coh}$ : il existe un $G$-recouvrement $S=\bigcup_{i\in I} S_i$ de $S$ par des domaines analytiques quasi-compacts $S_i$ qui soient chacun l'image par $p$ de domaines analytiques quasi-compacts $S'_i\subset S'$. On se donne maintenant un $G$-recouvrement de $p^{-1}(S_i)$ par des domaines affinoïdes $R'_{ij}\subset S'$. Alors on dispose du diagramme commutatif suivant : 

$$\xymatrix{
    \coprod_{ij} (S'_i\amalg R'_{ij}) \ar[d]  \ar[r]  & \coprod_{ij} S_i \ar[d]  \\
    S' \ar[r]  & S
  }$$ 

Les deux flèches verticales sont de descente universellement parce que ce sont des $G$-recouvrement surjectifs, et puisque chacune des flèche $(S'_i\amalg R'_{ij})\to S_i$ est plate, surjective et topologiquement propre, la flèche $\coprod_{ij} (S'_i\amalg R'_{ij})\to \coprod_{ij}S_i$ est de descente universellement par le paragraphe précedent, donc la composée $\coprod_{ij} (S'_i\amalg R'_{ij})\to S$ est un morphisme de descente par le lemme $\ref{mybis}$, et par une seconde application du lemme $\ref{mybis}$, on voit que le morphisme $p:S'\to S$ est de descente pour $\Psi$, ce qui permet de conclure.
\end{proof}

\subsection{La question de l'effectivité } 
On va maintenant s'intéresser à la question de l'effectivité d'une donnée de descente fixée, pour le pseudo-foncteur au dessus de la catégorie des espaces $k$-affinoïdes qui à un espace affinoïde $S$ associe la catégorie des espaces analytiques au dessus de $S$. On verra qu'on peut toujours rendre effectif une donnée de descente affinoïde, mais que l'espace obtenu n'est pas forcément affinoïde.

La définition qui suit est motivée par le fait qu'il n'existe pas en géométrie rigide (et donc en géométrie de Berkovich) de notion satisfaisante de morphismes affinoïdes (par analogie avec les morphismes affines entre deux schémas). En effet, Liu montre dans \autocite{liu1990}, proposition 3.3 que pour tout espace de Stein rigide $X$ quasi-compact, il existe un espace affinoïde $Y$, un morphisme d'espaces rigides $f:X\to Y$ et un recouvrement affinoïde admissible $(Y_i)$ de $Y$ tel que $f^{-1}(Y_i)$ est un recouvrement affinoïde admissible de $X$, et il exhibe (théorème 4) de tels espaces de Stein quasi-compacts mais non-affinoïdes. La notion qui suit ne se teste donc pas sur un recouvrement affinoïde quelconque de la base.
\begin{defi} On dit qu'un morphisme d'espaces $k$-analytiques $p:Y\to X$ est presque affinoïde s'il existe un $G$-recouvrement $X=\cup_{i\in I} X_i$ de $X$ par des domaines affinoïdes tel que pour tout $i\in I$, l'image inverse par $p$ de $X_i$ soit un domaine affinoïde de $Y$.
\end{defi}
\begin{prop}La propriété d'être presque-affinoïde est stable par changement de base.
\end{prop}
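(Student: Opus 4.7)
Le plan est de tirer en arrière le $G$-recouvrement qui témoigne du caractère presque-affinoïde. On se donne donc $p:Y\to X$ presque-affinoïde avec un $G$-recouvrement $X=\bigcup_{i\in I} X_i$ par des domaines affinoïdes tel que $Y_i:=p^{-1}(X_i)$ soit un domaine affinoïde de $Y$ pour tout $i\in I$. On considère un morphisme quelconque $f:X'\to X$ et l'on pose $Y':=Y\times_X X'$, $p':Y'\to X'$. Il s'agit de construire un $G$-recouvrement affinoïde de $X'$ dont chaque image inverse par $p'$ est affinoïde.

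Puisque les $G$-recouvrements sont stables par tiré-en-arrière, la famille $(f^{-1}(X_i))_{i\in I}$ est un $G$-recouvrement de $X'$ par des domaines analytiques. On fixe alors un $G$-recouvrement $X'=\bigcup_{j\in J} X'_j$ par des domaines affinoïdes, et pour tout couple $(i,j)\in I\times J$, on $G$-recouvre l'intersection $X'_j\cap f^{-1}(X_i)$, qui est un domaine analytique de l'espace affinoïde $X'_j$, par une famille $(X'_{j,i,k})_{k\in K_{i,j}}$ de domaines affinoïdes de $X'_j$ (et donc de $X'$). La famille obtenue en faisant varier $(i,j,k)$ fournit un $G$-recouvrement de $X'$ par des domaines affinoïdes, chacun inclus dans un $f^{-1}(X_i)$.

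Il reste à identifier ${p'}^{-1}(X'_{j,i,k})$. Comme $X'_{j,i,k}\subset f^{-1}(X_i)$, le morphisme structural $X'_{j,i,k}\to X$ se factorise par $X_i$, d'où la suite d'isomorphismes canoniques
\[
{p'}^{-1}(X'_{j,i,k})=Y\times_X X'_{j,i,k}=(Y\times_X X_i)\times_{X_i}X'_{j,i,k}=Y_i\times_{X_i}X'_{j,i,k}.
\]
Ce dernier espace est le produit fibré de trois espaces $k$-affinoïdes (à savoir $Y_i$, $X_i$ et $X'_{j,i,k}$), donc il est $k$-affinoïde. Par ailleurs, comme $X'_{j,i,k}$ est un domaine affinoïde de $X'$, l'espace ${p'}^{-1}(X'_{j,i,k})=Y'\times_{X'}X'_{j,i,k}$ est un domaine analytique de $Y'$ ; c'est donc bien un domaine affinoïde de $Y'$, ce qui conclut.

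La principale difficulté est purement formelle : il s'agit de s'assurer qu'on a le droit d'invoquer la stabilité des $G$-recouvrements par tiré-en-arrière et la possibilité de raffiner l'intersection de deux domaines analytiques dans un espace affinoïde par des domaines affinoïdes, propriétés standard de la $G$-topologie des espaces de Berkovich (cf.\ \autocite{berkovich1993etale}). Aucune subtilité liée à la platitude ou à la surjectivité n'intervient ici, la propriété ne portant que sur l'existence d'un $G$-recouvrement adapté.
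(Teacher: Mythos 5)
Ta preuve est correcte et suit essentiellement la même démarche que celle du texte : on raffine le tiré en arrière du $G$-recouvrement témoin par des domaines affinoïdes, puis on identifie l'image inverse de chacun d'eux au produit fibré $Y_i\times_{X_i}X'_{j,i,k}$ de trois espaces affinoïdes. La seule différence est de présentation (raffinement global de $G$-recouvrements chez toi, argument ponctuel par voisinages dans le texte), sans incidence sur le fond.
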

\begin{proof}[Preuve]
Si l'on se donne $f:X\to S$ et $p:S'\to S$ deux morphismes d'espaces analytiques, avec $f:X\to S$ presque affinoïde, alors $f':X'\to S'$ le changement de base de $f$ le long de $p$ est presque affinoïde. En effet, si l'on se donne $s'\in S'$, $U$ un domaine affinoïde de $S$ contenant $s=p(s')$ tel que $f^{-1}(U)$ est encore affinoïde et $V$ un domaine affinoïde de $S'$ contenant $s'$ et vérifiant $p(V)\subset U$, alors $f'^{-1}(V)=f^{-1}(U)\times_U V$ est bien affinoïde.

Maintenant, il suffit de se donner un nombre fini  d'affinoïdes $U_i$ contenant $s$ et dont l'union contient un voisinage de $s$ dans $S$. Alors on peut trouver des affinoïdes $V_i$ qui contiennent $s'$ et dont l'union contient un voisinage de $s'$ dans $S'$. Alors l'image inverse par $f'$ des $V_i$ est affinoïde. Comme on peut faire ce raisonnement pour chaque $s'\in S'$, le morphisme $f'$ est bien presque-affinoïde.\end{proof}

La proposition suivante est une reformulation de la proposition 1.3.3 de \autocite{berkovich1993etale}.

\begin{prop} Considérons un $G$-recouvrement $p:S':=\coprod_{i\in I} S_i \to S$ d'un espace affinoïde $S$ par un nombre fini de domaines affinoïdes $S_i$, et soit $X'$ un espace affinoïde au dessus de $S'$ muni de données de descente relativement à $p$. Alors il existe un espace analytique $X$ au dessus de $S$, dont le morphisme structural est presque affinoïde, et qui induit la donnée de descente de $X'$.

\end{prop}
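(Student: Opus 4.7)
Le plan est de décomposer les données le long du $G$-recouvrement fini, puis d'appliquer le théorème de recollement des espaces $k$-analytiques de Berkovich (proposition 1.3.3 de \autocite{berkovich1993etale}).

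D'abord, puisque $S' = \coprod_{i\in I} S_i$ comme espace $k$-affinoïde, l'espace $X'$ se décompose naturellement en $X' = \coprod_{i\in I} X'_i$, où $X'_i$ désigne l'image inverse de $S_i$ dans $X'$ : c'est à la fois un ouvert-fermé de $X'$ et un espace $k$-affinoïde au dessus de $S_i$. De même, $S'' = S' \times_S S' = \coprod_{(i,j)\in I^2} S_{ij}$ avec $S_{ij} := S_i \times_S S_j = S_i \cap S_j$ qui est un domaine affinoïde de $S$ (intersection de deux domaines affinoïdes). La donnée de descente $\varphi : p_1^* X' \to p_2^* X'$ se décompose alors en une famille d'isomorphismes $\varphi_{ij} : X'_i|_{S_{ij}} \to X'_j|_{S_{ij}}$, où $X'_i|_{S_{ij}} := X'_i \times_{S_i} S_{ij}$ est un domaine affinoïde de $X'_i$. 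La condition de cocycle sur $\varphi$ se reformule en la condition usuelle $\varphi_{ik} = \varphi_{jk} \circ \varphi_{ij}$ sur les triples intersections $X'_i|_{S_i \cap S_j \cap S_k}$.

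Ensuite, j'applique la proposition 1.3.3 de \autocite{berkovich1993etale} à ces données, en prenant comme cartes d'indice $i\in I$ les espaces $X'_i$, avec les domaines d'intersection distingués $X'_i|_{S_{ij}} \subset X'_i$ et les isomorphismes de recollement $\varphi_{ij}$. On obtient ainsi un espace $k$-analytique $X$ muni d'un $G$-recouvrement par des domaines affinoïdes $X_i \simeq X'_i$ tel que $X_i \cap X_j$ s'identifie à $X'_i|_{S_{ij}}$ via $\varphi_{ij}$. Les morphismes structuraux $X'_i \to S_i \hookrightarrow S$ coïncident sur les intersections par compatibilité avec $\varphi_{ij}$, donc se recollent par une application à des morphismes de la même proposition en un morphisme structural $X \to S$. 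Par construction, le produit fibré $X \times_S S' = \coprod_i X \times_S S_i \simeq \coprod_i X_i \simeq X'$, et la donnée de descente canonique induite sur ce produit fibré restitue exactement les isomorphismes $\varphi_{ij}$, c'est à dire $\varphi$.

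Enfin, il reste à observer que le morphisme structural $X \to S$ est presque affinoïde : le $G$-recouvrement $S = \bigcup_i S_i$ de la base par des domaines affinoïdes vérifie que la préimage de chaque $S_i$ dans $X$ est précisément $X_i \simeq X'_i$, qui est affinoïde. La seule difficulté, à mon sens mineure, consistera à vérifier soigneusement que les hypothèses techniques de la proposition 1.3.3 de \autocite{berkovich1993etale} sont bien satisfaites — en particulier la compatibilité des $\varphi_{ij}$ avec les structures de domaine analytique sur les recouvrements doubles, ainsi que la cohérence des morphismes structuraux à recoller — ce qui découle du fait qu'un isomorphisme d'espaces $k$-analytiques préserve les domaines affinoïdes et de ce que les $\varphi_{ij}$ sont par hypothèse des $S_{ij}$-isomorphismes.
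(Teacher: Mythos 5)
Votre preuve est correcte et suit essentiellement la même démarche que celle de l'article : décomposer la donnée de descente le long du $G$-recouvrement fini en données de recollement $(X'_i, X'_i|_{S_{ij}}, \varphi_{ij})$, invoquer la proposition de recollement de Berkovich (le texte cite 1.1.3 là où vous citez 1.3.3, mais il s'agit du même résultat de recollement le long d'un nombre fini de domaines affinoïdes fermés), puis constater que $f^{-1}(S_i)=X'_i$ est affinoïde, d'où le caractère presque affinoïde. Votre rédaction est simplement plus détaillée que celle, très succincte, de l'article.
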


\begin{proof}
Notons $f':X'\to S'$ le morphisme structural. Par la proposition 1.1.3 de \autocite{berkovich1993etale}, on peut toujours recoller les espaces selon un nombre finis de domaines affinoïdes, et il existe un espace $k$-analytique $X$ qui rend effectif la donnée de descente. Par définition, cet espace est muni d'un morphisme $f:X\to S$ tel que $f^{-1}(S_i)=f'^{-1}(S_i)$, et chaque $f'^{-1}(S_i)$ étant un espace $k$-affinoïde, on en déduit que $f$ est un morphisme presque affinoïde. 
\end{proof}

Le résultat phare de cette section est que la catégorie fibrée des espaces presque affinoïdes au dessus de la catégorie des espaces $k$-affinoïdes est un champ pour la topologie dont les flèches couvrantes sont les morphismes plats et surjectifs.

\begin{defi} On dit qu'un morphisme entre espaces $k$-affinoïdes $p:S'\to S$ vérifie la propriété $\mathrm{PA}$ si pour toute donnée de descente $X'\to S'$ relativement à $p$ d'un espace affinoïde $X'$, il existe un espace analytique $X$ au dessus de $S$ dont le morphisme structural est presque-affinoïde, et qui rend effectif la donnée de descente $X'$ dans la catégorie des espaces analytiques.
\end{defi}

On énonce deux lemmes qui sont les analogues de $\ref{mybis}$ et $\ref{my}$ mais pour ce qui concerne l'effectivité de données de descente affinoïdes.

\begin{lemm}\label{redx}Considérons $R\stackrel{v}{\to} S\stackrel{u}{\to} T$ des morphismes entre espaces $k$-affinoïdes, et $\Psi$ le pseudo-foncteur qui à un espace $k$-affinoïde $S$ associe la catégorie des espaces $k$-affinoïdes au dessus de $S$. Alors : 
\begin{enumerate}
\item Supposons que le morphisme $S\to T$ vérifie la propriété $\mathrm{PA}$ et que $R\to S$ est un $G$-recouvrement par un nombre fini de domaines affinoïdes. Alors $R\to T$ vérifie la propriété $\mathrm{PA}$. 
\item Supposons que le morphisme $S\to T$ vérifie la propriété $\mathrm{PA}$, et que $R\to S$ est un morphisme de descente effective pour $\Psi$ dans la catégorie des espaces affinoïdes.  Alors $R\to T$ possède la propriété $\mathrm{PA}$.
\item Supposons que $R\to T$ possède la propriété $\mathrm{PA}$ et que $R\to S$ est un épimorphisme effectif universel dans la catégorie des espaces $k$-analytiques, et possède la propriété $\mathrm{PA}$. Alors $S\to T$ possède la propriété $\mathrm{PA}$.
\end{enumerate}
\end{lemm}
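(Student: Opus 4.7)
The plan is to mimic, for the property $\mathrm{PA}$, the Giraud-type reductions used in the proof of Lemma \ref{my}: given an affinoid descent datum over the source, transport it up along $v$, descend it to an intermediate affinoid object over $S$, equip that object with compatible descent data relative to $u$, and then invoke the hypothesis $\mathrm{PA}$ for $u$ in order to produce the desired almost-affinoid effectivisation over $T$.

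For (1), let $X'$ be affinoid over $R$ with descent datum $\varphi$ relative to $u\circ v$, and write $v: R = \coprod_i R_i \to S$ as a finite $G$-covering by affinoid domains. I would first pull $\varphi$ back along the canonical map $R\times_S R \to R\times_T R$ to obtain descent data relative to $v$; by Proposition \ref{hyp3}(2) this effectivises to an affinoid $Y'\to S$ with $v^* Y' \simeq X'$. The key observation is that $R\times_T R = \coprod_{i,j} R_i\times_T R_j \to S\times_T S$, and its triple analogue $R\times_T R\times_T R \to S\times_T S\times_T S$, are again finite $G$-coverings by affinoid domains, hence morphisms of effective descent for the affinoid pseudo-functor by Proposition \ref{hyp3}(2). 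A diagram chase analogous to the one in Lemma \ref{my}(1) then equips $Y'$ with descent data relative to $u$ that pulls back to $\varphi$; applying $\mathrm{PA}$ for $u$ produces the required almost-affinoid $X\to T$.

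For (2) the argument is structurally identical, but the role of Proposition \ref{hyp3}(2) is replaced by the hypothesis that $v$ itself is of effective descent for $\Psi$, supplemented by the verification that $R\times_T R \to S\times_T S$ and its triple analogue remain morphisms of descent for $\Psi$. This verification is the delicate point: one exhibits these maps as compositions of base changes of $v$ and appeals to the stability under base change of the effective descent property in the sense of Theorem \ref{fibfi}.

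For (3), given $Y'$ affinoid over $S$ with descent datum $\psi$ relative to $u$, one pulls back along $v$ to obtain $X' := v^* Y'$ affinoid over $R$ with descent datum relative to $u\circ v$. Applying $\mathrm{PA}$ for $u\circ v$ produces an almost-affinoid $X\to T$ effectivising $X'$. Since $v$ is a universal effective epimorphism in the category of $k$-analytic spaces, it is of descent for the pseudo-functor of $k$-analytic $S$-spaces (cf.\ \autocite{grothendieck1963schemas}); so the canonical isomorphism $X\times_T R \simeq X' \simeq v^* Y'$, which is compatible with the descent data relative to $v$, descends uniquely to an isomorphism $X\times_T S \simeq Y'$ over $S$ that is compatible with $\psi$. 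The almost-affinoid character of $X\to T$ is preserved, and the additional assumption that $v$ has $\mathrm{PA}$ serves to rule out any pathology when one compares the $G$-coverings witnessing almost-affinoidness on the two sides. The main technical obstacle, common to all three parts, is the combinatorial bookkeeping of descent data along double and triple fibre products; in each case the Giraud-style template underlying Lemma \ref{my} and Lemma \ref{mybis} supplies the needed diagram chases, the only new input being the control of the $G$-covering that makes the effectivising object almost-affinoid rather than affinoid.
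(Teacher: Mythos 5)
Votre stratégie générale — transporter la donnée de descente le long de $v$, la rendre effective au-dessus de $S$, munir l'objet obtenu d'une donnée de descente relative à $u$, puis invoquer $\mathrm{PA}$ pour $u$ — est bien le canevas à la Giraud que suit l'article, lequel ne rédige d'ailleurs en détail que le point (2) ; sur ce point votre argument coïncide pour l'essentiel avec celui du texte. Une imprécision tout de même : pour voir que $m:R\times_T R\to S\times_T S$ reste un morphisme de descente, il ne faut pas invoquer une \enquote{stabilité par changement de base de la descente effective} (une telle stabilité n'est pas établie), mais le fait qu'être de descente pour $\Psi$ équivaut à être un épimorphisme effectif universel dans la catégorie des espaces affinoïdes, propriété qui, elle, est stable par changement de base et composition, et dont $m$ hérite puisqu'il s'obtient à partir de $v$ par ces opérations. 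Le point (3) est traité de façon raisonnable et conforme à l'esprit de Giraud 10.11 (on peut noter que votre justification du rôle de l'hypothèse $\mathrm{PA}$ sur $v$ y reste vague).

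En revanche, votre argument pour le point (1) comporte une lacune réelle. Vous affirmez que la donnée de recollement obtenue relativement au $G$-recouvrement $v$ \enquote{s'effectivise, par la proposition \ref{hyp3}, en un espace affinoïde $Y'\to S$}. Or le point (2) de la proposition \ref{hyp3} n'affirme que la pleine fidélité (morphisme de descente), pas l'effectivité ; et surtout, l'objet recollé $Y'$ n'est en général \emph{pas} affinoïde au-dessus de $S$ : il est seulement presque affinoïde (c'est la reformulation de la proposition 1.3.3 de Berkovich donnée juste avant le lemme, et l'exemple de Liu cité dans le texte montre que le recollé peut effectivement ne pas être affinoïde — c'est précisément la raison d'être de la notion \enquote{presque affinoïde}). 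Par conséquent, on ne peut pas appliquer directement la propriété $\mathrm{PA}$ de $u$ à $Y'$, puisque $\mathrm{PA}$ ne porte que sur les données de descente dont l'objet est affinoïde. Il manque donc un argument pour effectiviser, relativement à $u$, une donnée de descente dont l'objet est seulement presque affinoïde — par exemple en se ramenant aux morceaux affinoïdes de $Y'$ au-dessus d'un $G$-recouvrement convenable, comme dans la preuve du théorème \ref{cool} ; en l'état, votre démonstration du point (1) ne conclut pas.
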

\begin{proof}[Preuve]
Les démonstrations sont semblables à \autocite{girauddescente1964} 10.10 et 10.11. On démontre par exemple la deuxième propriété. On suppose donc que $S\to T$ vérifie la propriété $\mathrm{PA}$ et que $R\to S$ est de descente effective dans la catégorie des espaces $k$-affinoïdes. Alors on se donne une donnée de descente $X'\to R$ relativement au morphisme $R\to T$, avec $X'$ un espace $k$-affinoïde. Notons $p_i:S\times _T S \to S$, $q_i:R\times_T R\to R$ et $r_i:R\times_S R\to R$ les projections canoniques. On dispose aussi de morphismes canoniques $m:R\times_T R\to S\times_T S$ et $l:R\times_S R\to R\times_T R$. Alors $X'$ vient avec un isomorphisme $\varphi:q_1^* X'\to q_2 ^*X'$ vérifiant la condition de cocycle usuelle. En tirant la donnée de descente $\varphi$ par le morphisme $l$, on en déduit une donnée de descente sur $X'$ relativement à $v:R\to S$. Puisque ce morphisme est de descente effective dans la catégorie des espaces affinoïdes, on en déduit qu'il existe un espace affinoïde $X_1$ au dessus de $S$ muni d'un isomorphisme de données de descente $\lambda':v^*X_1\to X'$ relativement à $v$.

Maintenant, comme $v$ est de descente si et seulement si c'est un épimorphisme effectif universel dans la catégorie des espaces $k$-affinoïdes, que cette hypothèse est stable par changement de base (affinoïdes) et que le morphisme $m$ est obtenu à partir de $v$ par changement de base (affinoïdes) et composition, le morphisme $m$ est de descente, et on vérifie qu'on peut utiliser cette propriété pour trouver une donnée de descente canonique $\varphi_1$ au dessus de $X_1$ relativement au morphisme $u$. Maintenant, puisque $u$ vérifie la propriété $\textrm{PA}$, on en déduit qu'il existe un espace presque affinoïde $X_0$ au dessus de $T$ muni d'un isomorphisme de données de descente $u^*X_0\to X_1$, et on vérifie enfin que $X_0$ fournit un espace presque affinoïde au dessus de $T$ qui rend effectif la donnée de descente $X'$ ce  qui montre que $R\to T$ vérifie $\textrm{PA}$.
\end{proof}
\begin{rema} En particulier, par la proposition $\ref{hyp2}$ et la deuxième partie du lemme ci-dessus, si l'on précompose un morphisme vérifiant la propriété $\mathrm{PA}$ par un morphisme fini, plat et surjectif de source affinoïde, la composée vérifie encore la propriété $\mathrm{PA}$.
\end{rema}
 
\begin{prop} Considérons un morphisme fidèlement plat entre espaces $k$-affinoïdes $p:S'\to S$, et un $S'$-espace $k$-affinoïde $X'$ qui est muni de données de descente relativement au morphisme $p$. Alors il existe un espace analytique $X$ presque affinoïde au dessus de $S$ qui rend effectif la donnée descente, c'est à dire que l'on a un isomorphisme de données de descente entre $X'$ muni de ses données de descente vers $X\times_S S'$ muni des données de descente canoniques ; autrement dit, le morphisme $p$ vérifie la propriété $\mathrm{PA}$.
\end{prop}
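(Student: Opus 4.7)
Le plan est de reproduire étape par étape la stratégie de preuve du théorème $\ref{fibfi}$, en remplaçant les applications du lemme $\ref{my}$ par des applications du lemme $\ref{redx}$. Le point-clé est que l'on dispose déjà, via la proposition qui précède immédiatement cet énoncé, d'un outil de recollement pour la $G$-topologie qui produit des espaces presque-affinoïdes et non affinoïdes en général, ce qui nous permettra de conclure à la toute dernière étape.

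Commençons par la réduction au cas strict non-trivialement valué. On choisit un polyrayon $k$-libre $r$ tel que $S_r$ et $S'_r$ soient strictement $k_r$-affinoïdes, et que $k_r$ soit non trivialement valué. Par la proposition $\ref{hyp1}$, l'extension $S_r \to S$ est de descente effective pour le pseudo-foncteur des espaces $k$-affinoïdes, donc vérifie $\mathrm{PA}$ (via la partie (2) du lemme $\ref{redx}$ appliquée à l'identité), et il en est de même de $S'_r \to S'$. De plus, $\mathcal{M}(k_r)$ étant un point, $S_r \to S$ est un homéomorphisme plat et proprement surjectif, donc un épimorphisme effectif universel d'espaces $k$-analytiques par le théorème $\ref{pf}$. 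On peut donc appliquer (2) puis (3) du lemme $\ref{redx}$ aux compositions $S'_r \to S_r \to S$ et $S'_r \to S' \to S$ : pour démontrer que $p$ vérifie $\mathrm{PA}$, il suffit de le faire pour $p_r : S'_r \to S_r$. On est donc ramené au cas strict non trivialement valué.

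Viennent ensuite les réductions successives via le théorème de multisection et la factorisation de Ducros, exactement comme dans les paragraphes de réduction au cas quasi-fini, quasi-étale, puis aux flèches plus simples du théorème $\ref{fibfi}$. À chacune de ces étapes, on utilise le lemme $\ref{redx}$ : la multisection produit un morphisme admettant une section (donc vérifiant $\mathrm{PA}$ avec réponse affinoïde) ; la factorisation locale de Ducros fournit des morphismes finis plats surjectifs (qui vérifient $\mathrm{PA}$ par la proposition $\ref{hyp2}$) et des $G$-recouvrements finis par des domaines affinoïdes (qui vérifient $\mathrm{PA}$ par la proposition de recollement rappelée ci-dessus). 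Les revêtements finis étales galoisiens du paragraphe $\ref{redgtop}$ se gèrent de la même manière puisqu'ils sont finis plats surjectifs. On est finalement ramené au cas d'un $G$-recouvrement fini par des domaines affinoïdes, traité directement par la proposition énoncée juste avant le théorème.

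Le principal point à surveiller est que la notion de ``vérifier $\mathrm{PA}$'' n'est pas symétrique comme l'est celle d'être de descente effective : la proposition de recollement $G$-local donne a priori un espace qui n'est plus affinoïde mais seulement presque-affinoïde, et il faut donc s'assurer que les parties (1), (2) et (3) du lemme $\ref{redx}$ sont correctement appliquées pour conserver cette notion au fil des compositions. L'exemple de Liu cité par l'auteur montre d'ailleurs qu'il n'y a aucun espoir de remplacer ``presque-affinoïde'' par ``affinoïde'' dans l'énoncé final, ce qui justifie \emph{a posteriori} le choix de cette notion intermédiaire.
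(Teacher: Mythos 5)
Votre stratégie générale est bien celle du texte : reprendre la chaîne de réductions de la preuve du théorème \ref{fibfi} (polyrayon, multisection, factorisation de Ducros, recouvrements galoisiens, $G$-recouvrements) en remplaçant le lemme \ref{my} par le lemme \ref{redx}. Mais l'exécution de la toute première réduction comporte une lacune réelle. Vous affirmez qu'en appliquant la partie (2) du lemme \ref{redx} à la composition $S'_r\to S_r\to S$, on se ramène à démontrer la propriété $\mathrm{PA}$ pour $p_r:S'_r\to S_r$. Or la partie (2) exige que la flèche précomposée $S'_r\to S_r$ soit de descente \emph{effective} pour le pseudo-foncteur des espaces affinoïdes, ce qui est précisément l'énoncé, faux en général d'après l'exemple de Liu, que la notion de presque-affinoïde sert à contourner ; et le lemme \ref{redx} ne contient aucune clause permettant de composer deux morphismes vérifiant seulement $\mathrm{PA}$. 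Le point délicat que vous signalez vous-même (la non-symétrie de $\mathrm{PA}$) est exactement celui dans lequel votre réduction tombe : si $S'_r\to S_r$ vérifie $\mathrm{PA}$, la solution obtenue au-dessus de $S_r$ n'est que presque-affinoïde, et on ne dispose à ce stade d'aucun résultat permettant de la redescendre le long de $S_r\to S$.

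La preuve du texte évite cet écueil en ne changeant jamais de but : toutes les assertions intermédiaires sont de la forme \enquote{$R\to S$ vérifie $\mathrm{PA}$} avec $S$ la base initiale. On construit d'abord la factorisation complète au-dessus de $S_r$, puis on remonte la chaîne en appliquant la partie (3) du lemme (qui permet de passer de \enquote{$R\to S$ vérifie $\mathrm{PA}$} à \enquote{$S_{\mathrm{int}}\to S$ vérifie $\mathrm{PA}$} lorsque $R\to S_{\mathrm{int}}$ est un épimorphisme effectif universel vérifiant $\mathrm{PA}$), et on redescend par les parties (1) et (2) uniquement lorsqu'on précompose par un $G$-recouvrement fini ou par un morphisme de descente effective affinoïde (section, morphisme fini plat surjectif, extension de polyrayon). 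C'est cette comptabilité précise, et non la simple substitution de \ref{my} par \ref{redx}, qui fait fonctionner l'argument ; votre esquisse devrait être réécrite dans ce sens. Signalons enfin que $S_r\to S$ n'est pas un homéomorphisme : les fibres $\mathcal{M}(\mathcal{H}(x)\hat{\otimes}_k k_r)$ ne sont pas des points en général, et seule la section de Shilov, continue mais non analytique, existe.
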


\begin{proof}[Démonstration]
La démonstration suit le même chemin et les mêmes réductions que la preuve du théorème $\ref{fibfi}$, mais en utilisant à la fois  le lemme $\ref{redx}$ ainsi que la remarque qui suit. Toutes les réductions utilisent des factorisations par des morphismes plats surjectifs, donc par le théorème $\ref{pf}$, ce sont des morphismes de descente, et l'application du lemme est licite. Nous utiliserons donc les notations de la preuve du théorème $\ref{fibfi}$.

On se donne donc un morphisme fidèlement plat $S'\to S$ entre espaces $k$-affinoïdes. Si $S$ et $S'$ sont strictement affinoïdes et que le corps $k$ est non trivialement valué, on pose $S_r=S$ et $S'_r=S'$. Sinon, soit $r$ un polyrayon $k$-libre non trivial tel que $S'_r\to S_r$ est un morphisme entre espaces strictement $k_r$-affinoïdes. Par le théorème de multisection de Ducros, il existe un espace strictement $k_r$-affinoïde $H$, et un morphisme quasi-fini, plat et surjectif de $H\to S_r$ tel que le changement de base $H'=H\times_{S_r}S'_r\to H$ possède une section. Maintenant, puisque $H\to S_r$ est un morphisme quasi-fini, plat et surjectif, par le théorème 8.4.6 de \autocite{ducros2017families} (que l'on est en mesure d'appliquer grâce à la remarque 8.4.3 de \autocite{ducros2017families}), il existe un $G$-recouvrement fini de $H$ par des affinoïdes $V'_i$, des espaces affinoïdes $W'_i$, et des morphismes $\amalg_{i\in I} V'_i \to \amalg_{i\in I}W'_i$  et $R:=\amalg_{i\in I}W'_i\to S_r$ tels que le morphisme $\amalg_{i\in I} V'_i\to R$ soit fini, plat et surjectif, tel que le morphisme $R=\amalg_{i\in I} W'_i\to S_r$ soit quasi-étale et surjectif et tel que la composée $V'_i\to W'_i\to S'_r$ soit simplement la restriction de la flèche initiale $H\to S_r$. Maintenant, en utilisant les mêmes factorisation et notations que dans la preuve $\ref{fibfi}$ (sauf pour le $Y\to X$ du cas quasi-étale, qui ici est remplacé par le morphisme $R=\amalg_{i\in I} W'_i\to S_r$, et les $V$, resp $T$, resp $Y_i$ du cas quasi-étale, qui sont remplacés par $V''_x$, resp $T_x$, resp $Y_{jw}$), il existe un $G$-recouvrement fini de $S_r$ par des domaines affinoïdes $S_r=\cup_{x\in E} V''_x$, pour chaque $x\in E$, il existe un $V''_x$-espace $T_x$ dont le morphisme structural est fini, plat et surjectif, et il existe des $R$-espaces affinoïdes $\amalg_{x\in E, j\in J_x} Y_{jx}$ tel que la flèche $\amalg_{x\in E} \amalg_{j\in J_x} Y_{jx}\times_{S_r} V''_x\times_{V''_x} T_x\to \amalg_{x\in E} T_x$ ainsi que le morphisme $\amalg_{x\in E} \amalg_{j\in J_x} Y_{jx}\times_{S_r} V''_x\times_{V''_x} T_x\to \amalg_{x\in E}V''_x\times_{S_r}R\times_{V''_x}T_x$ soient des $G$-recouvrement finis et surjectifs.

On résume la factorisation introduite ici, et détaillée dans la démonstration de $\ref{fibfi}$ par le diagramme commutatif suivant :

$$\xymatrix{
    H' \ar[d] \ar[r]  & H \ar[d]  & \amalg_{i\in I} V'_i \ar[d] \ar[l] & & & \\
   S'_r  \ar[r] \ar[d] & S_r \ar[d] & R\ar[l] & \amalg_{x\in E}V''_x\times_{S_r} R \ar[ld]\ar[l]& \amalg_{x\in E}R\times_{S_r} T_x \ar[l] \ar[dl] & \amalg_{x\in E,j\in J_x}  Y_{jx}\times_{S_r} V''_x\times_{V''_x} T_x \ar[l]\ar[lld]\\
   S' \ar[r] & S &  \amalg_{x\in E} V''_x \ar[lu] & \amalg_{x\in E} T_x\ar[l] &  &
  }$$

Et les flèches du diagramme sont toutes de sources et but affinoïdes comme suit :  
\begin{enumerate}
\item $S'\to S$ est plate surjective entre espaces affinoïdes.
\item $S'_r\to S_r$ est plate surjective entre espaces strictement affinoïdes sur $k_r$.
\item $H'\to H$ est fidèlement plat entre espaces strictement affinoïdes et possède une section, et $H'\to S_r$ est quasi-fini, plat et surjectif.
\item $\amalg_{i\in I} V'_i\to H$ est un $G$-recouvrement fini, $\amalg_{i\in I} V'_i\to R$ est fini, plat et surjectif, et $R\to S_r$ est quasi-étale surjectif.
\item Les morphismes $\amalg_{x\in E,j\in J_x}  Y_{jx}\times_{S_r} V''_x\times_{V''_x} T_x \to \amalg_{x\in E}R\times_{S_r} T_x$ ainsi que $\amalg_{x\in E,j\in J_x}  Y_{jx}\times_{S_r} V''_x\times_{V''_x} T_x\to \amalg_{x\in E}T_x$, $\amalg_{x\in E} V''_x\times_{S_r}R\to R$ et $\amalg_{x\in E} V''_x\to S_r$ sont des $G$-recouvrements surjectifs par un nombre fini de domaines affinoïdes.
\item Les morphismes $\amalg_{x\in E} T_x\to \amalg_{x\in E}V''_x$ ainsi que $\amalg_{x\in E}R\times_{S_r} T_x\to \amalg_{x\in E}V''_x\times_{S_r} R$ sont finis, plat et surjectifs.
\end{enumerate}

A partir de là, on va appliquer plusieurs fois le lemme $\ref{redx}$ et la remarque qui suit immédiatement après : on applique d'abord les deux première parties du lemme précédent pour avoir que $\amalg_{x\in E,j\in J_x}  Y_{jx}\times_{S_r} V''_x\times_{V''_x} T_x \to S$ vérifie $\mathrm{PA}$. On applique ensuite la troisième partie du lemme plusieurs fois pour avoir que $R\to S$ vérifie $\mathrm{PA}$. On applique la deuxième partie du lemme pour avoir que $\amalg_{i\in I}V'_i\to S$ vérifie la propriété PA. On applique la troisième partie pour avoir que $H\to S$ vérifie la propriété PA. La deuxième partie pour avoir que $H'\to S$ vérifie la propriété PA et enfin deux fois la troisième partie du lemme pour avoir que $S'_r\to S$ puis $S'\to S$ possède la propriété PA, ce qui achève la preuve de la proposition. \qedhere

\end{proof}

Grâce à la proposition d'effectivité précédente, on en déduit le théorème suivant : 

\begin{theo} \label{cool} Considérons le pseudo-foncteur $\Psi$ qui à un espace $k$-affinoïde $S$ associe la catégorie des espaces $k$-analytiques au dessus de $S$ dont le morphisme structural est presque affinoïde. Alors les morphismes fidèlement plats sont des morphismes de descente effectif pour ce pseudo-foncteur. Autrement dit, le pseudo-foncteur qui à un espace $k$-affinoïde $S$ associe $\Psi S$ est un champ pour la topologie dont les flèches couvrantes sont plates et surjectives.
\end{theo}

\begin{proof}[Démonstration]
Par le théorème $\ref{final}$, c'est un morphisme de descente. Reste à montrer l'effectivité d'une donnée de descente. 

Considérons donc un morphisme $p:S'\to S$ fidèlement plat entre espaces $k$-affinoïdes et un $S'$-espace analytique $X'$ muni de données de descente $\varphi:p_1^*X'\to p_2 ^* X'$ avec $p_i:S'':=S'\times_S S'\to S'$ les deux projections canoniques et dont le morphisme structural $f:X'\to S'$ est presque affinoïde. On souhaite montrer que la donnée de descente est effective.
Soient donc $S'_i$ des domaines affinoïdes en nombre fini qui recouvrent $S'$ et tels que $f^{-1}(S'_i)$ est affinoïde. Soit $X''=X'\times_{S'}\amalg_{i\in I}S'_i$. Par définition c'est un espace affinoïde, et en tirant en arrière la donnée de descente $\varphi$ par le morphisme $k:\amalg_{i\in I} S'_i\times_S' \amalg_{i\in I} S'_i$, on voit que $X''$ est naturellement fourni avec ${k^*\varphi:q_1^* X'\to q_2 ^* X'}$, donnée de recollement relativement au morphisme composé $\amalg_{i\in I} S'_i\to S$, et où l'on a noté $q_i:{\amalg_{i\in I} S'_i}\times_S \amalg_{i\in I} S'_i\to \amalg_{i\in I} S'_i$ les deux projections canoniques. Maintenant, avec des notations évidentes, si l'on tire la relation de cocycle $p_{13}^*\varphi=p_{23}^* \varphi\circ p_{12}^* \varphi$ par le morphisme canonique $\amalg_{i\in I} S'_i\times_S \amalg_{i\in I} S'_i\times_S \amalg_{i\in I} S'_i\to S'\times_S S'\times_S S'$, grâce au théorème $\ref{final}$, on voit que $k^*\varphi$ vérifie bien la condition de cocycle, et donc que c'est bien une donnée de descente pour $X''$ relativement à $\amalg_{i\in I} S'_i\to S$.

Puisque $h:\amalg_{i\in I} S'_i\to S$ est un morphisme fidèlement plat, par la proposition précédente sur l'effectivité, cette donnée de descente est effective, et l'on obtient un $S$-espace analytique $X$ dont le morphisme structural est presque affinoïde, et tel que l'on ait un isomorphisme de données de descente $h^*X\to X''$. Maintenant, en utilisant que $\amalg_{i\in I} S'_i\to S$ ainsi que ${\amalg_{i\in I} S'_i\times_S \amalg_{i\in I} S'_i\to S'\times_S S'}$ sont des morphismes de descente, on voit que l'on peut descendre l'isomorphisme de données	 de descente $h^*X\to X''$ en un isomorphisme de données de descente de $p^*X$ vers $X'$, et cela achève la preuve de l'effectivité.
\end{proof}

On donne un dernier critère d'effectivité analogue au cas des schémas pour une donnée de descente générale pour les $S$-objets.
\begin{defi} On se donne un morphisme entre espaces $k$-affinoïdes $p:S'\to S$, et un $S'$-espace analytique $X'$ muni de données de descente, c'est à dire d'un isomorphisme de $S''$-objets $\varphi:X'\times_S S'\to S'\times_S X'$. Un domaine affinoïde $X'_i$ de $X'$ est $\varphi$-stable si $\varphi$ se restreint en un isomorphisme $X_i '\times_S S'\to S'\times_S X_i '$.
\end{defi}
\begin{rema} \label{equiv2} On peut reformuler la définition précédente en introduisant $q_1:X'\times_S S'\to X'$ la première projection et $q_2:X'\times_S S'\to X'$ la composée de $\varphi$ avec la seconde projection. Alors un domaine affinoïde $X'_i$ de $X'$ est $\varphi$-stable si l'on a l'égalité $q_2^{-1}(X'_i)=q_1^{-1}(X'_i)$ ou bien $q_2(q_1^{-1}(X_i'))= X'_i$.
\end{rema}

\begin{lemm} \label{submersive}Considérons un morphisme $f:Y\to X$ surjectif et topologiquement propre entre espaces analytiques. Alors $f$ est submersive, c'est à dire que la topologie sur $X$ est la topologie quotient relativement à $f$ : une partie $E\subset X$ est ouverte (resp. fermée) si et seulement si $f^{-1}(E)$ l'est.

\end{lemm}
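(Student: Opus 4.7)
Le plan est simple puisqu'il s'agit d'un énoncé purement topologique, conséquence du fait général qu'une application continue, surjective et fermée est submersive. Je n'utiliserai donc que deux ingrédients : la continuité de $f$ (qui entraîne que l'image inverse d'une partie ouverte ou fermée reste ouverte ou fermée) et le fait qu'un morphisme topologiquement propre est en particulier fermé comme application continue, ce qui résulte directement de la définition (universellement fermé implique fermé, en prenant le changement de base trivial).

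Je commencerais par le sens direct : si $E\subset X$ est ouverte (resp. fermée), alors $f^{-1}(E)$ est ouverte (resp. fermée) par continuité de $f:Y\to X$. Ce sens ne requiert aucune hypothèse autre que la continuité.

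Pour la réciproque, il me suffit de traiter le cas fermé, le cas ouvert s'en déduisant par passage au complémentaire (puisque $f^{-1}(X\setminus E)=Y\setminus f^{-1}(E)$). Supposons donc que $f^{-1}(E)$ est fermée dans $Y$. Puisque $f$ est topologiquement propre, il est en particulier fermé, donc $f(f^{-1}(E))$ est fermée dans $X$. Comme $f$ est surjective, on a l'égalité ensembliste $f(f^{-1}(E))=E$, et on conclut que $E$ est fermée.

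Il n'y a pas de véritable obstacle ici, l'énoncé n'étant qu'une reformulation de la propriété topologique standard \emph{toute surjection continue et fermée est submersive}. La seule subtilité éventuelle est de rappeler que \enquote{topologiquement propre} au sens de l'introduction entraîne bien \enquote{fermé comme application continue} : cela résulte immédiatement de la définition par universellement fermé, appliquée au changement de base par l'identité.
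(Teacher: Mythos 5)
Votre démonstration est correcte, mais elle suit un chemin différent de celui du texte. Vous utilisez directement la partie \og universellement fermé \fg{} de la définition d'un morphisme topologiquement propre : le changement de base trivial montre que $f$ est fermée, et une surjection continue fermée est submersive par l'argument standard $f(f^{-1}(E))=E$. Le texte procède autrement : il invoque la proposition 1.1.1 de Berkovich pour ramener l'assertion à une question $G$-locale sur la base, se restreint au cas où $X$ est affinoïde, puis utilise l'autre caractérisation de la propreté topologique (l'image inverse de tout domaine affinoïde est quasi-compacte) : $Y$ est alors quasi-compact, $f^{-1}(E)$ fermé y est quasi-compact, son image $E$ est quasi-compacte, donc fermée puisque $X$ est séparé. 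Votre approche est plus courte et purement topologique, et évite la réduction $G$-locale ; celle du texte part de la caractérisation par quasi-compacité, qui est souvent la forme sous laquelle l'hypothèse est vérifiée en pratique, et rend la fermeture de $f$ visible via la compacité plutôt que de l'extraire de la définition. Les deux arguments sont valides et le vôtre ne présente aucune lacune ; veillez simplement à signaler, comme vous le faites, que le sens direct ne demande que la continuité et que le cas ouvert se déduit du cas fermé par passage au complémentaire.
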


\begin{proof}[Preuve]
 Par la proposition 1.1.1 de \autocite{berkovich1993etale}, l'assertion est $G$-locale sur la base, on peut donc supposer $X$ affinoïde, et $Y$ est alors quasi-compact, donc si l'on se donne $E$ dont l'image inverse est fermée, alors $f^{-1}(E)$ est quasi-compact donc par surjectivité $E$ est quasi-compact donc fermé puisque $X$ est séparé.\end{proof}
\begin{lemm} \label{partie} Considérons un morphisme fidèlement plat (resp. fidèlement plat et topologiquement propre) entre espaces analytiques  $p:S'\to S$. Notons $p_i:S'\times_S S'\to S'$ les deux projections canoniques. On se donne un domaine analytique compact $W'\subset S'$ (resp. un ouvert) vérifiant la relation ${p_1^{-1}(W')=p_2^{-1}(W')}$. 

Alors, il existe un domaine analytique compact (resp. un ouvert) $W\subset S$ tel que l'on ait $W'=p^{-1}(W)$.
\end{lemm}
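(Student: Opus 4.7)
Dans les deux cas on posera $W:=p(W')$ en tant que sous-ensemble de $S$, et la clef sera d'exploiter la saturation de $W'$ induite par la condition $p_1^{-1}(W')=p_2^{-1}(W')$. Cette condition exprime en effet ensemblistement que pour tout couple $(x,y)\in S'\times_S S'$, on a $x\in W'$ si et seulement si $y\in W'$, c'est à dire que $W'$ est une réunion de fibres de $p$ ; on en déduira immédiatement l'égalité $p^{-1}(W)=W'$ comme sous-ensembles de $S'$. Puisqu'un domaine analytique (resp. un ouvert) est entièrement déterminé par son ensemble sous-jacent, il suffira de vérifier que $W$ a la structure voulue pour conclure.

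Dans le cas d'un ouvert, où $p$ est de plus topologiquement propre et surjectif, j'appliquerai directement le lemme $\ref{submersive}$ : le morphisme $p$ est submersif, donc la partie $W$ est ouverte dans $S$ si et seulement si $p^{-1}(W)=W'$ l'est, ce qui est le cas par hypothèse.

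Dans le cas d'un domaine analytique compact $W'$, il s'agira de montrer que $W=p(W')$ est un domaine analytique compact de $S$. La compacité topologique sera immédiate puisque $W'$ est compact et $p$ continu. Pour obtenir la structure analytique, je procéderai $G$-localement sur $S$. On commencera par recouvrir l'image compacte $p(W')\subset S$ par un nombre fini de domaines affinoïdes $S_1,\ldots,S_n$ de $S$ (ce qui est licite puisque tout point d'un espace $k$-analytique admet un voisinage qui est une union finie d'affinoïdes). Les intersections $W'\cap p^{-1}(S_i)$ forment alors un recouvrement de $W'$ par des domaines analytiques, que l'on $G$-raffinera en un recouvrement par des domaines affinoïdes de $W'$ ; par quasi-compacité de $W'$ pour la $G$-topologie, on pourra en extraire un sous-recouvrement fini $W'=V_1\cup\ldots\cup V_N$ tel que $p(V_s)\subset S_{i(s)}$ pour chaque $s$. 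Chaque morphisme $V_s\to S_{i(s)}$ étant plat entre espaces $k$-analytiques compacts, l'application du théorème d'image 9.2.1 de \autocite{ducros2017families} rappelé en introduction fournira que $p(V_s)$ est un domaine analytique compact de $S_{i(s)}$, donc de $S$. On conclura que $W=\bigcup_{s=1}^{N}p(V_s)$ est un domaine analytique compact de $S$ comme union finie de tels.

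Le point le plus délicat me semble être l'extraction du raffinement affinoïde fini $(V_s)$ ci-dessus, qui repose à la fois sur la quasi-compacité $G$-topologique de $W'$ et sur la compatibilité des $G$-recouvrements par affinoïdes ; ce n'est cependant pas un obstacle conceptuel, mais uniquement un point de manipulation standard de la $G$-topologie à traiter avec soin.
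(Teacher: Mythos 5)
Your proof is correct and follows essentially the same route as the paper's: the cocycle/saturation condition gives $p^{-1}(p(W'))=W'$ (via the surjectivity of the analytic fibre product onto the topological one, which you use implicitly), then the image theorem 9.2.1 de Ducros settles the compact analytic domain case and submersivity (lemme \ref{submersive}) the open case. The only difference is that you spell out the finite affinoid reduction needed to apply 9.2.1 in its form for compact spaces, a step the paper leaves implicit.
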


\begin{proof}[Preuve]

On a l'égalité $p^{-1}(p(W'))=W'$ par surjectivité du produit fibré analytique dans le produit fibré topologique et l'hypothèse vérifiée par $W'$.
On pose donc $W=p(W')$ qui est un domaine analytique compact de $S$ par le theorème 9.2.1 de \autocite{ducros2017families} dans le cas où $W'$ est un domaine analytique compact.

Dans le cas où $W'$ est un ouvert et $p$ est de plus supposé topologiquement propre, $p$ est une application submersive par le lemme précédent, et l'égalité précédente montre que $W$ est ouvert.
\end{proof}

\begin{prop} Considérons un morphisme fidèlement plat $p:S'\to S$ entre espaces $k$-analytiques.
On suppose que $(X',\varphi)$ est un espace analytique Hausdorff muni de données de descente et d'un $G$-recouvrement fini par des domaines affinoïdes $X'=\cup X_i'$ tel que chaque $X_i'$ est stable pour $\varphi$. 

Alors la donnée de descente $(X',\varphi)$ est effective dans la catégorie des espaces analytiques si et seulement si chaque donnée de descente induite $X_i '$ est effective.
\end{prop}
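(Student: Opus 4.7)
Le plan est de procéder par double implication, le sens direct étant essentiellement immédiat et le sens réciproque reposant sur une construction de recollement.

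Pour le sens direct, supposons $(X', \varphi)$ effective via un $S$-espace $X$ avec isomorphisme de données de descente $\alpha : X \times_S S' \cong X'$. Sous cette identification, la $\varphi$-stabilité d'un $X_i'$ se traduit, au vu de la remarque $\ref{equiv2}$, par la relation $p_1^{-1}(X_i') = p_2^{-1}(X_i')$ dans $X \times_S S''$, où $p_1, p_2 : S'' \to S'$ désignent les deux projections canoniques. Le lemme $\ref{partie}$ appliqué au morphisme fidèlement plat $X \times_S S' \to X$ et au domaine analytique compact $X_i'$ fournit alors un domaine analytique compact $X_i \subset X$ tel que $X_i' = X_i \times_S S'$ comme sous-espaces analytiques, ce qui rend effective la donnée de descente induite sur $X_i'$.

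Pour le sens réciproque, partons de descentes effectives $X_i$ pour chaque $X_i'$, avec isomorphismes $\alpha_i : X_i \times_S S' \cong X_i'$ de données de descente. Puisque $X'$ est Hausdorff, les intersections $X_{ij}' := X_i' \cap X_j'$ sont des domaines analytiques de $X_i'$ et de $X_j'$ qui héritent de la $\varphi$-stabilité~; le sens direct appliqué à $(X_i', \alpha_i)$ leur fournit des descentes $X_{ij}^{(i)} \subset X_i$, et symétriquement $X_{ij}^{(j)} \subset X_j$. Les deux espaces $X_{ij}^{(i)}$ et $X_{ij}^{(j)}$ résolvent alors le même problème de descente pour $X_{ij}'$, et je vais en déduire un unique isomorphisme canonique $\iota_{ij} : X_{ij}^{(i)} \to X_{ij}^{(j)}$ en recouvrant $S$ par des domaines affinoïdes $V_\beta$ suffisamment fins pour que les restrictions de $X_{ij}^{(i)}$ et $X_{ij}^{(j)}$ soient affinoïdes au dessus de chaque $V_\beta$, puis en invoquant la pleine fidélité de la proposition $\ref{gen}$ localement. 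Les relations de cocycle $\iota_{ik} = \iota_{jk} \circ \iota_{ij}$ se vérifient de la même façon en passant aux intersections triples $X_{ijk}' := X_i' \cap X_j' \cap X_k'$, qui restent $\varphi$-stables. Un recollement standard des espaces $k$-analytiques (analogue à la proposition 1.3.3 de \autocite{berkovich1993etale}) fournit alors un $S$-espace analytique $X$, muni d'un $G$-recouvrement par les $X_i$ dont les tirés en arrière par $p$ reconstituent le $G$-recouvrement de $X'$ par les $X_i'$, et les $\alpha_i$ se recollent en un isomorphisme $X \times_S S' \cong X'$ de données de descente.

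L'obstacle principal sera la construction des isomorphismes $\iota_{ij}$, qui demande d'exhiber localement sur $S$ suffisamment de structure affinoïde compatible entre les différents $X_{ij}^{(i)}$ et $X_{ij}^{(j)}$ pour pouvoir invoquer les résultats de pleine fidélité de la section précédente. Une fois ces isomorphismes obtenus avec leurs relations de cocycle, les vérifications restantes (compatibilités avec les données de descente globales, effectivité du recollement) sont routinières mais techniques.
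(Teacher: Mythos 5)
L'architecture générale de votre preuve coïncide avec celle du texte : le sens direct se traite en descendant chaque $X_i'$ en un domaine analytique compact de $X$ via le lemme \ref{partie} appliqué à $X\times_S S'\to X$ (le texte le déclare simplement évident ; votre rédaction est correcte et plus explicite), et le sens réciproque consiste à descendre les intersections $X'_{ij}$ en des domaines analytiques des $X_i$ par ce même lemme, puis à recoller.

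En revanche, votre construction des isomorphismes de transition $\iota_{ij}$ comporte une lacune réelle. Vous proposez de recouvrir $S$ par des domaines affinoïdes $V_\beta$ assez fins pour que les restrictions de $X_{ij}^{(i)}$ et $X_{ij}^{(j)}$ soient affinoïdes, puis d'invoquer la proposition \ref{gen}. Deux obstacles : d'une part, rien ne garantit l'existence d'un tel recouvrement --- les $X_i$ ne sont que des espaces analytiques au dessus de $S$, et même s'ils étaient presque affinoïdes, un domaine analytique compact d'un espace presque affinoïde n'a aucune raison d'être affinoïde au dessus de petits affinoïdes de la base ; c'est précisément le phénomène (exemple de Liu) qui motive l'introduction de la notion de morphisme presque affinoïde dans l'article. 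D'autre part, la proposition \ref{gen} exige que la flèche couvrante soit un morphisme entre espaces \emph{affinoïdes}, or $p^{-1}(V_\beta)$ n'est en général pas affinoïde, de sorte qu'elle ne s'applique pas même après restriction. La réparation est immédiate et c'est exactement ce que fait le texte : l'isomorphisme $\alpha_j^{-1}\circ\alpha_i$ restreint au dessus de $X'_{ij}$ est un morphisme de données de descente entre les tirés en arrière de $X_{ij}^{(i)}$ et de $X_{ij}^{(j)}$, et la pleine fidélité du foncteur tiré en arrière pour les $S$-espaces analytiques quelconques (théorème \ref{pf}) le descend directement en $\iota_{ij}$, sans aucune réduction au cas affinoïde ; les relations de cocycle sur les intersections triples se vérifient de même par fidélité. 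Une fois cette substitution faite, le reste de votre recollement est conforme à la preuve du texte.
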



\begin{proof}[Preuve]

Le sens direct est évident. On montre le sens réciproque.
Par hypothèse, il existe des $S$-espaces $X_i$ et des isomorphismes de données de descente $\lambda_i:X_i\times_S S'\to X_i'$.

On pose maintenant $W'_{ij}=\lambda_i^{-1}(X'_{ij})$ avec $X'_{ij}=X'_i\cap X'_j$. Alors puisque $\lambda_i$ est un morphisme de données de descente et fait commuter le diagramme idoine, on en déduit que $W'_{ij}$ vérifie les hypothèse du lemme précédent, donc il existe un domaine analytique $X_{ij}$ de $X_i$ dont le changement de base par $p$ est égal à $W'_{ij}$. 

Maintenant, la collection $(X_i',X'_{ij})$ se recolle en un espace $X'$, on en déduit par pleine fidélité du foncteur tiré en arrière que la collection $(X_i,X_{ij})$ se recolle en un espace $X\to S$, et que les isomorphismes $\lambda_i$ se recollent en un isomorphisme de données de descente $\lambda:X\times_S S'\to X'$. \qedhere 

\end{proof}

On donne une dernière proposition d'effectivité, qui possède son analogue en corollaire 7.3 de l'exposé sur la descente  \autocite{grothendieck2002rev} et se démontre exactement de la même manière.
\begin{prop} \label{recouv} Considérons un morphisme $p:S'\to S$ entre espaces analytiques, et $(S_i)$ un $G$-recouvrement de $S$ par un nombre fini de domaines affinoïdes. Considérons aussi $X'$ un $S'$-espace analytique compact munis de données de descente relativement à $p$.
Notons $S_i'$ et $X_i'$ les espaces déduits de $S'$ et $X'$ par le changement de base $S_i\to S$. 

Alors la donnée de descente sur $X'$ est effective si et seulement si pour tout $i$, la donnée de descente sur $X_i '$ relativement à $S_i '\to S_i$ est effective.
\end{prop}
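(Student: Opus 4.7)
Le sens direct est tautologique~: si $X\to S$ est un $S$-espace analytique muni d'un isomorphisme de données de descente $\lambda:X\times_S S'\to X'$, alors pour chaque $i$, le changement de base $X\times_S S_i \to S_i$ munit d'un isomorphisme de données de descente $\lambda_i$ résout le problème de descente pour $X'_i$ relativement à $S'_i\to S_i$.

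Pour le sens réciproque, je me donne, pour chaque $i$, un $S_i$-espace analytique compact $X_i$ et un isomorphisme de données de descente $\lambda_i:X_i\times_{S_i}S'_i\to X'_i$ relativement au morphisme $S'_i\to S_i$. Le plan consiste d'abord à construire un recollement de la famille $(X_i)_{i\in I}$ au dessus du $G$-recouvrement $(S_i)$ de $S$, puis à recoller en un $S$-espace $X$ via la proposition 1.1.3 de \autocite{berkovich1993etale} (ce qui est licite car les $X_i$ sont compacts et $(S_i)$ est un recouvrement fini par des affinoïdes), et enfin à vérifier que cet espace $X$ résout bien le problème de descente initial relativement à $p$.

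La construction des données de recollement repose sur la pleine fidélité. Notons $S_{ij}=S_i\cap S_j$ et $S'_{ij}=p^{-1}(S_{ij})$, et considérons les deux $S_{ij}$-espaces compacts $X_i\times_{S_i} S_{ij}$ et $X_j\times_{S_j} S_{ij}$. Après changement de base à $S'_{ij}$, les isomorphismes $\lambda_i$ et $\lambda_j$ induisent tous deux un isomorphisme de données de descente avec $X'\times_{S'} S'_{ij}$, et leur composée fournit un isomorphisme de $S'_{ij}$-espaces analytiques $(X_i\times_{S_i} S_{ij})\times_{S_{ij}} S'_{ij}\to (X_j\times_{S_j} S_{ij})\times_{S_{ij}} S'_{ij}$ qui respecte les données de descente naturelles héritées du morphisme $S'_{ij}\to S_{ij}$. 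Par le théorème $\ref{pf}$ appliqué au morphisme plat et proprement surjectif $S'_{ij}\to S_{ij}$ (obtenu par changement de base de $p$, qu'on suppose plat et proprement surjectif dans notre cadre), le foncteur tiré en arrière est pleinement fidèle, ce qui fournit un unique isomorphisme $\mu_{ij}:X_i\times_{S_i} S_{ij}\to X_j\times_{S_j} S_{ij}$ dont le tiré en arrière est l'isomorphisme précédent.

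Il reste à vérifier la condition de cocycle $\mu_{jk}\circ \mu_{ij}=\mu_{ik}$ sur $S_{ijk}=S_i\cap S_j\cap S_k$. Par construction, après changement de base à $S'_{ijk}=p^{-1}(S_{ijk})$, les deux membres coïncident avec l'automorphisme identité de $X'\times_{S'}S'_{ijk}$ (compte tenu des identifications fournies par les $\lambda_l$), donc sont égaux, et une nouvelle application de la pleine fidélité via $\ref{pf}$ au morphisme $S'_{ijk}\to S_{ijk}$ force l'égalité $\mu_{jk}\circ \mu_{ij}=\mu_{ik}$ au niveau de $S_{ijk}$. On obtient ainsi une donnée de recollement cohérente, et la proposition 1.1.3 de \autocite{berkovich1993etale} fournit un $S$-espace analytique $X$. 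Pour conclure, on construit l'isomorphisme de données de descente $\lambda:X\times_S S'\to X'$ en recollant les $\lambda_i$~: cela est licite précisément parce que les $\mu_{ij}$ sont compatibles aux $\lambda_i$ après changement de base à $S'$ (c'est ce qui a servi à les définir), et la compatibilité avec les données de descente s'obtient une dernière fois en invoquant la pleine fidélité de $\ref{pf}$. Le point délicat dans toute cette démarche est la bonne articulation des applications de la pleine fidélité aux multiples niveaux~: intersections doubles (pour définir $\mu_{ij}$), triples (pour le cocycle), et globalement (pour la compatibilité finale avec les données de descente).
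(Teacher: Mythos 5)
Your proof is correct and is precisely the standard argument (gluing the local solutions via full faithfulness on double and triple intersections, then Berkovich's 1.1.3) that the paper itself does not write out but delegates to the schematic analogue in the descent expos\'e of SGA. The only caveat is that your appeals to Theorem \ref{pf} require $p$ to be flat and properly surjective --- a hypothesis absent from the statement as written but implicit in context, and which you rightly make explicit.
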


Comme dans l'exposé sur la descente fidèlemente plate de \autocite{grothendieck2002rev}, on peut donner un théorème d'effectivité pour une donnée de descente dont le morphisme sous-jacent est universellement injectif, et la démonstration est exactement la même que dans le cas des schémas : on vérifie que tout domaine affinoïde est stable pour la donnée de descente. Avant, on donne quelques propriétés des morphismes universellement injectifs : 

\begin{prop}
Considérons un morphisme d'espaces $k$-analytiques $f:Y\to X$. Alors les propriétés suivantes sont équivalentes : 
\begin{enumerate}
\item le morphisme $f$ est universellement injectif, c'est à dire que le morphisme obtenu à partir de $f$ par extension du corps de base, ou bien par changement de base par un espace $k$-analytique est injectif.
\item pour toute extension de corps valué complet $k\hookrightarrow L$, l'application induite $\Hom(\mathcal{M}(L),Y)\to \Hom(\mathcal{M}(L),X)$ sur les morphismes de $k$-espaces analytiques est injective,
\end{enumerate}
\end{prop}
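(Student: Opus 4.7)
Le plan est d'exploiter la correspondance, en géométrie de Berkovich, entre les morphismes $\mathcal{M}(L) \to Z$ et les couples $(z, \mathcal{H}(z) \hookrightarrow L)$ formés d'un point $z \in Z$ et d'un plongement isométrique de son corps résiduel complété au-dessus de $k$.

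Pour l'implication $(1) \Rightarrow (2)$, je fixerais un morphisme $h : \mathcal{M}(L) \to X$ et remarquerais que les relèvements $g : \mathcal{M}(L) \to Y$ de $h$ correspondent bijectivement aux sections du changement de base $\pi : Y_L := Y \times_X \mathcal{M}(L) \to \mathcal{M}(L)$. Sous $(1)$, l'espace $Y_L$ possède au plus un point $y$. Une telle section $s$ induit sur les corps résiduels complétés une rétraction $s^{\sharp} : \mathcal{H}(y) \to L$ de l'inclusion structurale $\pi^{\sharp} : L \hookrightarrow \mathcal{H}(y)$~; comme $s^{\sharp}$ est un morphisme de corps, l'identité $s^{\sharp} \circ \pi^{\sharp} = \mathrm{id}_L$ force $s^{\sharp}$ à être un isomorphisme, donc à être uniquement déterminé par $y$ comme inverse de $\pi^{\sharp}$. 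Il y a au plus un tel relèvement, ce qui établit $(2)$.

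Pour $(2) \Rightarrow (1)$, j'observerais d'abord qu'une extension de corps de base $\mathcal{M}(K) \to \mathcal{M}(k)$ n'est qu'un cas particulier de changement de base dans la catégorie des espaces $k$-analytiques~; il suffit donc de démontrer, pour tout morphisme $X' \to X$ d'espaces $k$-analytiques, que la projection $Y' := Y \times_X X' \to X'$ est injective ensemblistement. Étant donnés $y'_1, y'_2 \in Y'$ d'image commune $x' \in X'$, notons $x \in X$ et $y_1, y_2 \in Y$ leurs images respectives. Je construirais alors une extension de corps valué complet $L$ de $\mathcal{H}(x')$ munie de deux plongements isométriques $\mathcal{H}(y'_i) \hookrightarrow L$ prolongeant tous deux $\mathcal{H}(x') \hookrightarrow L$, en choisissant un point quelconque du spectre de Berkovich de l'algèbre de Banach non nulle $\mathcal{H}(y'_1) \hat{\otimes}_{\mathcal{H}(x')} \mathcal{H}(y'_2)$ et en prenant pour $L$ le corps résiduel complété correspondant. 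En composant avec la projection $Y' \to Y$, on obtient deux morphismes $g_i : \mathcal{M}(L) \to Y$ ayant même composée avec $f$, à savoir le morphisme déterminé par le point $x$ et le plongement $\mathcal{H}(x) \hookrightarrow \mathcal{H}(x') \hookrightarrow L$. L'hypothèse $(2)$ donne alors $g_1 = g_2$, d'où $y_1 = y_2 =: y$ et l'égalité des deux plongements induits $\mathcal{H}(y) \hookrightarrow L$. Jointe à la compatibilité déjà imposée sur $\mathcal{H}(x')$, cette donnée détermine un unique point dans la fibre $\mathcal{M}(\mathcal{H}(y) \hat{\otimes}_{\mathcal{H}(x)} \mathcal{H}(x'))$ de $Y'$ au-dessus de $(y, x')$, d'où $y'_1 = y'_2$.

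L'étape la plus délicate me semble être cette dernière identification~: il faut justifier précisément que la donnée de deux plongements compatibles dans un même $L$ détermine effectivement un unique point du spectre de l'algèbre de Banach tensorielle complétée, ce qui repose sur la description concrète des points d'un produit fibré d'espaces $k$-analytiques en termes de plongements compatibles de corps résiduels complétés.
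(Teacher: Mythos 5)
Votre démonstration est correcte et suit pour l'essentiel la même stratégie que celle du papier : le sens $(1)\Rightarrow(2)$ repose dans les deux cas sur le dictionnaire entre morphismes $\mathcal{M}(L)\to Y$ au-dessus de $h$ et sections du changement de base sur $\mathcal{M}(L)$ (la rétraction entre corps étant nécessairement l'inverse de $\pi^\sharp$), et le sens $(2)\Rightarrow(1)$ sur le choix d'un point d'un produit fibré analytique dominant le couple $(y'_1,y'_2)$ pour produire le corps $L$ commun. Votre variante qui prend un point de $\mathcal{M}(\mathcal{H}(y'_1)\hat{\otimes}_{\mathcal{H}(x')}\mathcal{H}(y'_2))$ est exactement le même argument que la surjectivité du produit fibré analytique sur le produit fibré ensembliste invoquée dans le papier, et l'identification finale que vous signalez comme délicate est bien justifiée par la description standard des points d'un produit fibré via les semi-normes induites par des plongements compatibles des corps résiduels complétés.
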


\begin{proof} Supposons que le morphisme $f$ est universellement injectif. Soit $L$ une extension complète de $k$. La donnée d'un élément de $\Hom(\mathcal{M}(L),Y)$ est équivalente à la donnée d'un point $y\in Y$ ainsi qu'une extension $\mathcal{H}(y)\to L$, c'est aussi équivalent à la donnée d'une section $s:\mathcal{M}(L)\to Y_L$ au morphisme structural $Y_L\to \mathcal{M}(L)$, et c'est aussi équivalent à la donnée d'un point de $Y_L$ dont le corps résiduel complété est $L$. Puisque le morphisme $Y_L\to X_L$ est par hypothèse supposé injectif, on en déduit que $\Hom(\mathcal{M}(L),Y)\to \Hom(\mathcal{M}(L),X)$ est injectif.

Maintenant, supposons que $\Hom(\mathcal{M}(L),Y)\to \Hom(\mathcal{M}(L),X)$ est injective. Soit $S$ un espace $k$-analytique ou alors le spectre d'une extension complète de $k$. On veut montrer que $f_S:Y_S\to X_S$ est universellement injectif. Pour cela, soient $y,y'\in Y_S$ qui sont envoyés sur le même point $f_S(y)=f_S(y')=s\in S$. Alors par surjectivité du produit fibré analytique dans le produit fibré topologique, on peut choisir un point $z\in Y_S\times_{S} Y_S$ dont l'image par chacune des deux projections canoniques est $y$ et $y'$. Maintenant, notons $L$ le corps résiduel complété de $z$. Alors, on dispose d'un carré commutatif suivant : 

$$\xymatrix{L&\mathcal{H}(y)\ar[l] \\
\mathcal{H}(y')\ar[u]&\mathcal{H}(s)\ar[u]\ar[l]}
$$

Notons $h_1:\mathcal{M}(L)\to \mathcal{M}(\mathcal{H}(y))\to Y$ et $h_2:\mathcal{M}(L)\to \mathcal{M}(\mathcal{H}(y'))\to Y$ les deux morphismes obtenus par composition. Si l'on note $r:Y_S\to Y$, et $t:Y_S\to S$, alors $r\circ h_1=r\circ h_2$ par hypothèse, car $f\circ r \circ h_1=f\circ r \circ h_2$, et $t\circ h_1=t\circ h_2$ par définition de $L$. Cela montre que les deux morphismes $h_1$ et $h_2$ sont égaux, et donc que $y=y'$.
\end{proof}

En particulier, les inclusions de domaines analytiques et les immersions fermées sont universellement injectives.

\begin{prop} Considérons un morphisme injectif $f:Y\to X$ tel que pour tout $y\in Y$, et $x=f(y)$, l'extension purement inséparable maximale de $\mathcal{H}(x)$ dans $\mathcal{H}(y)$ est dense dans celui-ci. Alors c'est un morphisme universellement injectif, qui vérifie donc les propriétés équivalentes de la proposition précédente.
\end{prop}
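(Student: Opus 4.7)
Le plan est d'utiliser la caractérisation équivalente du caractère universellement injectif donnée par la proposition précédente : il suffit de montrer que pour toute extension valuée complète $k\hookrightarrow L$, l'application $\Hom(\mathcal{M}(L),Y)\to \Hom(\mathcal{M}(L),X)$ est injective. Fixons donc $L$ et deux morphismes $h_1,h_2:\mathcal{M}(L)\to Y$ tels que $f\circ h_1=f\circ h_2$. Notons $y_i$ l'image du point fermé par $h_i$, et $x=f(y_1)=f(y_2)$. Comme $f$ est injectif par hypothèse, on a $y_1=y_2=:y$, et les morphismes $h_1$ et $h_2$ correspondent à deux $k$-plongements isométriques continus $\iota_1,\iota_2:\mathcal{H}(y)\hookrightarrow L$ qui coïncident sur l'image de $\mathcal{H}(x)$.

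Notons $K$ l'extension purement inséparable maximale de $\mathcal{H}(x)$ dans $\mathcal{H}(y)$. L'étape clé est de vérifier que $\iota_1$ et $\iota_2$ coïncident sur $K$. Si la caractéristique résiduelle est nulle alors $K=\mathcal{H}(x)$ et il n'y a rien à faire ; sinon, soit $p>0$ la caractéristique résiduelle. Pour tout $\alpha\in K$, il existe $n\geqslant 0$ tel que $\alpha^{p^n}\in \mathcal{H}(x)$, d'où $\iota_1(\alpha)^{p^n}=\iota_1(\alpha^{p^n})=\iota_2(\alpha^{p^n})=\iota_2(\alpha)^{p^n}$ dans $L$. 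Puisque dans un corps un élément admet au plus une racine $p^n$-ième, on obtient $\iota_1(\alpha)=\iota_2(\alpha)$.

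Par hypothèse, $K$ est dense dans $\mathcal{H}(y)$, et les plongements $\iota_1,\iota_2$ sont continus (isométriques), donc coïncident sur $\mathcal{H}(y)$ tout entier. Cela montre que $h_1=h_2$, et donc que $f$ est universellement injectif. Le principal écueil, essentiellement de nature algébrique, est précisément l'argument sur les racines $p^n$-ièmes : il est crucial que l'on soit en caractéristique $p$ sur $K$ pour déduire l'égalité des deux plongements à partir de l'égalité de leurs puissances $p^n$-ièmes ; le reste de la preuve se déroule mécaniquement en combinant cette rigidité avec la densité et la continuité.
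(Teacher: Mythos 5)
Votre preuve est correcte et suit essentiellement le même chemin que celle de l'article : injectivité ponctuelle par hypothèse, réduction à l'égalité de deux plongements isométriques $\mathcal{H}(y)\rightrightarrows L$ coïncidant sur $\mathcal{H}(x)$, égalité sur l'extension purement inséparable maximale (l'article invoque directement l'unicité du plongement d'une extension purement inséparable, vous l'explicitez via l'injectivité du Frobenius), puis conclusion par densité et continuité. Seule petite imprécision sans conséquence : la dichotomie doit porter sur la caractéristique du corps $\mathcal{H}(x)$ et non sur sa caractéristique résiduelle (en caractéristique nulle avec caractéristique résiduelle $p>0$, l'unicité des racines $p^n$-ièmes est fausse, mais $K=\mathcal{H}(x)$ et il n'y a de toute façon rien à démontrer).
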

\begin{proof}[Preuve]
L'injectivité est claire. On se donne $L$ une extension valuée complète de $k$, et deux morphismes $g,g':\mathcal{M}(L)\to Y$ qui sont égaux après composition par $f$. L'espace $\mathcal{M}(L)$ est un point, et puisque $f$ est injective, l'image de $g$ est aussi l'image de $g'$, c'est un point dont on note $\mathcal{H}(y)$ le corps résiduel complété.

Maintenant, on a un diagramme commutatif $\mathcal{H}(x)\to \mathcal{H}(y)\rightrightarrows L$. Notons $D$ une extension purement inséparable de $\mathcal{H} (x)$ dense dans $\mathcal{H}(y)$. Il suffit de montrer que les deux flèches sont égales en restriction à $D$ par densité, mais par définition d'une extension inséparable, ces deux flèches sont égales, ce qui montre que $g=g'$.
\end{proof}

On peut relier l'injectivité universelle à la notion de quasi-immersion introduite par Berkovich. On rappelle qu'un morphisme $Y\to X$ est une quasi-immersion s'il induit un homéomorphisme sur son image et si pour tout $y\in Y$, l'extension purement inséparable maximale de $\mathcal{H}(x)$ dans $\mathcal{H}(y)$ est dense dans celui-ci. Ce qui précède montre qu'une quasi-immersion est bien universellement injective.

\begin{prop}
Considérons un morphisme plat, surjectif, universellement injectif et presque-affinoïde entre espace analytiques $p:S'\to S$ avec $S$ et $S'$ compacts. Alors toute donnée de descente $X'\to S'$ avec $X'$ compact est effective.
\end{prop}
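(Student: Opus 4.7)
Le plan est de se ramener au cas affinoïde grâce à l'hypothèse de presque-affinoïdité, puis d'utiliser l'injectivité universelle pour montrer que chaque domaine affinoïde de $X'$ est $\varphi$-stable, afin d'appliquer les propositions d'effectivité précédentes. Puisque $p$ est presque affinoïde et $S$ compact, on peut fixer un $G$-recouvrement fini $S=\bigcup_{i\in I} S_i$ par des domaines affinoïdes tels que chaque $p^{-1}(S_i)$ soit affinoïde. Par la proposition $\ref{recouv}$, il suffit de démontrer l'effectivité de la donnée de descente induite après changement de base par chaque $S_i\to S$. Les hypothèses (presque-affinoïdité, platitude, surjectivité, injectivité universelle) étant préservées par ce changement de base, on se ramène ainsi au cas où $S$ et $S'$ sont tous deux affinoïdes. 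Par compacité de $X'$, on peut alors fixer un $G$-recouvrement fini $X'=\bigcup_j X'_j$ par des domaines affinoïdes.

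Le c\oe ur de l'argument consistera à établir que chacun des $X'_j$ est $\varphi$-stable. L'injectivité universelle de $p$ implique en particulier que $p$ est injectif, ce qui combiné à la surjectivité entraine que la diagonale $\Delta:S'\to S''$ est une bijection ensembliste. En tirant la condition de cocycle $p_{13}^*\varphi=p_{12}^*\varphi\circ p_{23}^*\varphi$ en arrière par la diagonale $S'\to S'''$, on voit que $\Delta^*\varphi$ est un endomorphisme idempotent, et étant un isomorphisme, il est égal à l'identité de $X'$. Comme les points de $S''$ se réduisent via $\Delta$ aux points de $S'$, cela force $\varphi$ à agir par l'identité sur les points de $p_1^*X'=p_2^*X'$, et donc à vérifier $q_1=q_2$ ensemblistement sur $X'\times_S S'$. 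On en déduit l'égalité ensembliste $q_1^{-1}(X'_j)=q_2^{-1}(X'_j)$, puis l'égalité en tant que domaines analytiques, un domaine analytique étant déterminé par son ensemble sous-jacent ; par la remarque $\ref{equiv2}$, ceci est exactement la $\varphi$-stabilité de $X'_j$.

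Il reste à conclure. Par le théorème $\ref{cool}$, chaque donnée de descente affinoïde $(X'_j,\varphi|_{X'_j})$ est effective (elle se descend en un $S$-espace analytique presque affinoïde, $X'_j$ étant a fortiori presque affinoïde). On applique alors la proposition précédente de recollement des données de descente $\varphi$-stables sur un espace Hausdorff pour conclure que la donnée de descente globale $(X',\varphi)$ est effective dans la catégorie des espaces analytiques.

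Le point principalement délicat sera l'étape où l'on déduit de l'injectivité universelle que $\varphi$ agit comme l'identité sur les points, et le passage rigoureux de cette égalité ensembliste $q_1=q_2$ à l'égalité des sous-domaines analytiques $q_1^{-1}(X'_j)$ et $q_2^{-1}(X'_j)$ dans $X'\times_S S'$.
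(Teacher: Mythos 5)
Votre démonstration est correcte et suit pour l'essentiel la même stratégie que celle du texte : réduction au cas où $S$ et $S'$ sont affinoïdes via $\ref{recouv}$, preuve que tout domaine affinoïde de $X'$ est $\varphi$-stable, puis conclusion par $\ref{cool}$ et la proposition de recollement des données de descente $\varphi$-stables sur un espace Hausdorff. La seule différence réside dans le mécanisme de l'étape de stabilité : le texte tire de la condition de cocycle la relation fonctionnelle $q_2(y,v)=q_2(q_2(y,v),v)$ sur les $T$-points et applique directement l'injectivité universelle de $q_2$ (qui est, à isomorphisme près, un changement de base de $p$) pour obtenir $q_1=q_2$, tandis que vous passez par la diagonale et l'idempotence de $\Delta^*\varphi$. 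Votre variante est viable mais demande deux précisions. D'une part, la bijectivité ensembliste de $\Delta:S'\to S''$ ne découle pas de la seule injectivité de $p$ jointe à sa surjectivité (le produit fibré analytique peut avoir strictement plus de points que le produit fibré topologique) : il faut invoquer l'injectivité de $p_1=p\times_S S'$, c'est-à-dire précisément l'injectivité \emph{universelle}, dont $\Delta$ est une section. D'autre part, le passage de $\Delta^*\varphi=\mathrm{id}$ à l'égalité ensembliste $q_1=q_2$ sur $X'\times_S S'=p_1^*X'$ exige que le morphisme canonique $\Delta^*(p_1^*X')\to p_1^*X'$ soit surjectif sur les points ; la bijectivité ensembliste de $\Delta$ ne suffit pas à elle seule (une bijection ensembliste ne le reste pas après produit fibré analytique en général), mais ici $\Delta$ est une immersion fermée surjective, donc universellement surjective, ce qui comble la lacune. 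L'argument du texte, en travaillant directement sur le foncteur des points, contourne ces deux subtilités ; le vôtre, une fois ces deux points justifiés, a l'avantage de rendre explicite le rôle de la diagonale et le fait que $q_1=q_2$ pour \emph{tout} sous-objet, pas seulement les domaines affinoïdes.
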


\begin{proof}[Preuve]
On peut supposer que $S$ et $S'$ sont affinoïdes par le lemme $\ref{recouv}$.
On montre maintenant que tout domaine affinoïde de $X'$ est stable par la donnée de descente en exploitant la condition de cocycle. En effet, si l'on note $q_i:X'\times_S S'\to X'$ comme en $\ref{equiv2}$, alors la relation de cocycle fournit pour tout espace analytique $T$ et tout $T$-point $(y,v)\in (X'\times_S S')(T)$ la relation $q_2(y,v)=q_2(q_2(y,v),v))$, et puisque $q_2$ est obtenu par changement de base à partir de $p$, il est universellement injectif, et cela montre que tout domaine affinoide est stable pour la donnée de descente. Maintenant, on se ramène au cas où $X'$ est affinoide, et par $\ref{cool}$, la donnée de descente est effective. \qedhere

\end{proof}

\section{Application}

On utilise le théorème $\ref{pf}$ pour montrer un petit résultat qui généralise la proposition A-1 de \autocite{rémy2009bruhattits}.

\begin{prop} \label{genera} Considérons un morphisme fidèlement plat $p:\mathcal{M}(\mathcal{B})\to\mathcal{M(A)}$ entre deux espaces $k$-affinoïdes. Considérons aussi un morphisme $j:\mathcal{M(C)}\to\mathcal{M}(\mathcal{A})$ avec $\mathcal{C}$ une algèbre $k$-affinoïde. Alors $\mathcal{M(C)}\to\mathcal{M(A)}$ est une inclusion de domaine affinoïde si et seulement si son changement de base $\mathcal{M(\mathcal{C}\hat{\otimes}_\mathcal{A} \mathcal{B})}\to\mathcal{M}(\mathcal{B})$ l'est.
\end{prop}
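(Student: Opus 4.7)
La stratégie consiste à caractériser $j$ comme inclusion de domaine affinoïde via sa propriété universelle à la Berkovich, les autres conditions (injectivité, homéomorphisme sur l'image) étant automatiques par compacité. Le sens direct de l'équivalence est immédiat puisque l'inclusion de domaine affinoïde est stable par changement de base. Pour la réciproque, on suppose donc que $j_B : \mathcal{M}(\mathcal{C} \hat{\otimes}_\mathcal{A} \mathcal{B}) \to \mathcal{M}(\mathcal{B})$ est une inclusion de domaine affinoïde, et il s'agit de vérifier que $j$ est injective, qu'elle induit un homéomorphisme sur son image, et qu'elle satisfait la propriété universelle : pour toute $k$-algèbre affinoïde $\mathcal{D}$ munie d'un morphisme borné $\mathcal{A} \to \mathcal{D}$ tel que $\mathcal{M}(\mathcal{D}) \to \mathcal{M}(\mathcal{A})$ se factorise ensemblistement par $j(\mathcal{M}(\mathcal{C}))$, il existe un unique $\mathcal{A}$-morphisme $\mathcal{C} \to \mathcal{D}$.

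L'injectivité de $j$ se déduit directement de celle de $j_B$ : si $j(c_1) = j(c_2) = a$, on choisit $b \in \mathcal{M}(\mathcal{B})$ relevant $a$ par surjectivité de $p$, puis les couples $(c_i, b)$ se relèvent en points de $\mathcal{M}(\mathcal{C} \hat{\otimes}_\mathcal{A} \mathcal{B})$ par surjectivité du produit fibré analytique dans le produit fibré topologique ; leur égalité sous $j_B$ entraîne alors $c_1 = c_2$. Puisque $\mathcal{M}(\mathcal{A})$ et $\mathcal{M}(\mathcal{C})$ sont compacts séparés, il s'ensuit aussitôt que $j$ est fermée et induit un homéomorphisme de $\mathcal{M}(\mathcal{C})$ sur une partie fermée de $\mathcal{M}(\mathcal{A})$.

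Pour la propriété universelle, on se donne $f : \mathcal{M}(\mathcal{D}) \to \mathcal{M}(\mathcal{A})$ comme dans l'énoncé ; son changement de base $f_B$ a son image contenue dans $p^{-1}(j(\mathcal{M}(\mathcal{C}))) = j_B(\mathcal{M}(\mathcal{C} \hat{\otimes}_\mathcal{A} \mathcal{B}))$, et la propriété universelle de $j_B$ fournit une unique factorisation $g_B : \mathcal{M}(\mathcal{D} \hat{\otimes}_\mathcal{A} \mathcal{B}) \to \mathcal{M}(\mathcal{C} \hat{\otimes}_\mathcal{A} \mathcal{B})$ de $f_B$. L'étape cruciale est alors de descendre $g_B$ en un morphisme $g : \mathcal{M}(\mathcal{D}) \to \mathcal{M}(\mathcal{C})$ au moyen de la descente fidèlement plate (Proposition \ref{gen}) : la compatibilité de $g_B$ aux données de descente canoniques s'obtient en observant que ses deux tirés en arrière par les projections $\mathcal{M}(\mathcal{B} \hat{\otimes}_\mathcal{A} \mathcal{B}) \rightrightarrows \mathcal{M}(\mathcal{B})$ sont deux factorisations d'un même morphisme par l'inclusion de domaine affinoïde $j_{B \hat{\otimes}_\mathcal{A} \mathcal{B}}$, changement de base de $j_B$, et coïncident donc par unicité de la factorisation. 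Par pleine fidélité du foncteur tiré en arrière, $g_B$ descend en un unique $\mathcal{A}$-morphisme $g : \mathcal{M}(\mathcal{D}) \to \mathcal{M}(\mathcal{C})$, dont la composée avec $j$ coïncide avec $f$ (encore par pleine fidélité), ce qui conclut. Le principal obstacle dans cet argument est précisément la vérification de la compatibilité de $g_B$ aux données de descente ; tout le reste est formel une fois mise en œuvre la pleine fidélité du tiré en arrière établie au Théorème \ref{pf}.
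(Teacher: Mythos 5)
Votre preuve est correcte et suit essentiellement le même chemin que celle de l'article : identification ensembliste $p^{-1}(j(\mathcal{M(C)}))=j_\mathcal{B}(\mathcal{M}(\mathcal{C}\hat{\otimes}_\mathcal{A}\mathcal{B}))$ via la surjectivité du produit fibré analytique sur le produit fibré topologique, factorisation $g_\mathcal{B}$ par la propriété universelle, compatibilité aux données de descente obtenue par unicité de la factorisation à travers l'inclusion de domaine affinoïde $j_{\mathcal{B}\hat{\otimes}_\mathcal{A}\mathcal{B}}$ changement de base de $j_\mathcal{B}$, puis descente par pleine fidélité (théorème \ref{pf}). Les seules différences sont cosmétiques (vous vérifiez en plus l'injectivité et l'homéomorphisme sur l'image, qui découlent de toute façon de la propriété universelle et de la compacité).
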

\begin{proof}[Démonstration]
L'image réciproque d'un domaine affinoïde est fermée et vérifie clairement la propriété universelle d'un domaine affinoïde. On démontre l'autre sens.

Notons $F$ l'image de $j$ dans $\mathcal{M(A)}$.
Supposons donc que la flèche $\mathcal{M(\mathcal{C}\hat{\otimes}_\mathcal{A} \mathcal{B})}\to\mathcal{M(B)}$ est une inclusion de domaine affinoïde. Notons $D'$ le fermé de $\mathcal{M(B)}$ sous-jacent à $\mathcal{M(\mathcal{C}\hat{\otimes}_\mathcal{A} \mathcal{B})}$. Alors on a l'égalité $p^{-1}(F)=D'$ puisqu'on dispose d'une surjection du produit fibré $\mathcal{M(\mathcal{C}\hat{\otimes}_\mathcal{A} \mathcal{B}})$ vers le produit fibré topologique sous-jacent.
Alors par surjectivité, $F=p(p^{-1}(F))=p(D')$ est l'image d'un compact par une application continue à valeur dans un espace séparé donc compact donc fermé.

On vérifie maintenant que $\mathcal{M(C)}$ vérifie la propriété universelle des domaines affinoïdes. On se donne donc une algèbre $k$-affinoïde $\mathcal{D}$ munie d'un morphisme d'espace affinoïde $p:\mathcal{M(D)}\to\mathcal{M(A)}$ d'image incluse dans $F$. On veut montrer que $f$ se factorise par $j$.

Considérons la flèche naturelle $k:\mathcal{M(D\hat{\otimes}_A B)}\to \mathcal{M(B)}$. Alors la composée $p\circ k$ se factorise par $\mathcal{D}$ et est donc d'image inclue $F$, donc l'image de $k$ est incluse dans $D'$, donc on dispose par propriété universelle d'un morphisme d'espaces affinoïde $g:\mathcal{M(D\hat{\otimes}_A B)}\to \mathcal{M(\mathcal{C}\hat{\otimes}_\mathcal{A} \mathcal{B})}$. 

La situation est résumée par le diagramme commutatif suivant : 

$$\xymatrix{
       & \mathcal{M(B)} \ar[r] & \mathcal{M(A)} & \\
     & \mathcal{M(C\hat{\otimes}_A B)} \ar[r] \ar[u] & \mathcal{M(C)} \ar[u]  & \\
     \mathcal{M(D\hat{\otimes}_A B)} \ar[ur]\ar[rrr] \ar[ruu]& & & \mathcal{M(D)}\ar[uul]
  }$$
Maintenant, si l'on est en mesure d'appliquer le théorème $\ref{final}$, on en déduit que la flèche ainsi définie $g:\mathcal{M(D\hat{\otimes}_A B)}\to \mathcal{M(\mathcal{C}\hat{\otimes}_\mathcal{A} \mathcal{B})}$ est obtenue à partir d'un unique morphisme d'espaces affinoïdes $\mathcal{M(D)}\to\mathcal{M(C)}$ par changement de base, et on a terminé, car cette flèche factorise $\mathcal{M(D)}\to\mathcal{M(A)}$.

Il reste donc à vérifier que si l'on note $p_i:\mathcal{M(B\hat{\otimes}_A B)}\to\mathcal{M(B)}$ les projections canoniques, on a $p_1 ^* g=p_2 ^* g$. Mais par définition les deux flèches $p_i ^* g:\mathcal{M(D\hat{\otimes}_A B\hat{\otimes}_A B)}\to \mathcal{M(C\hat{\otimes}_A B\hat{\otimes}_A B)}$ font commuter le diagramme suivant : 

$$\xymatrix{
     & \mathcal{M(C\hat{\otimes}_A B\hat{\otimes}_A B)}\ar[dr] &  \\
     \mathcal{M(D\hat{\otimes}_A B\hat{\otimes}_A B)} \ar[rr] \ar[ru]& & \mathcal{M(B\hat{\otimes}_A B)}
  }$$
où les autres flèches sont les flèches structurales et la flèche $\mathcal{M(C\hat{\otimes}_A B\hat{\otimes}_A B)}\to \mathcal{M(B\hat{\otimes}_A B)}$ est une inclusion de domaine affinoïde comme changement de base d'une telle inclusion, ce qui garanti l'égalité $p_1 ^* g=p_2 ^* g$ et prouve la proposition. \qedhere 

\end{proof}

On dispose aussi comme corollaire de $\ref{pf}$ du théorème suivant, obtenu par Conrad et Temkin dans \autocite{conrad2019descent} en 3.4 et 4.7 par des méthodes différentes, qui utilisent de manière intensive la réduction à la Temkin. Ici, une immersion compacte $f:Y\to X$ sera un morphisme d'espaces analytiques tel que pour tout domaine analytique compact $U\subset X$, il existe un domaine analytique compact $V$ de $U$ tel que $f^{-1}(U)\to U$ se factorise par une immersion fermée $f^{-1}(U)\to V$. 
\begin{theo} \label{retrouv} Considérons $S$ un espace $k$-analytique et un morphisme de $S$-espaces $k$-analytique $f:Y\to X$. Considérons un morphisme plat et proprement surjectif $p:S'\to S$. Notons $f':Y'\to X'$ le morphisme obtenu à partir de $f$ par changement de base. Alors $f$ vérifie la propriété  suivante si et seulement si $f'$ vérifie la propriété suivante : 
\begin{enumerate}
\item être un isomorphisme
\item être un monomorphisme
\end{enumerate}
De plus, si le morphisme $p$ est topologiquement compact, alors $f$ est une immersion compacte (resp. une immersion ouverte) si et seulement si $f'$ l'est.
\end{theo}
\begin{proof}[Preuve]
Ces propriétés sont stables par changement de base, seule la descente est à montrer. La situation est résumée par le double diagramme cartésien suivant : 

$$\xymatrix{
    Y' \ar[d]^{g'}  \ar[r]^{f'}  & X' \ar[d]^{g}  \ar[r] &S'\ar[d]^p \\
    Y \ar[r]^f  & X \ar[r] & S
  }$$

Si $f'$ est un isomorphisme, $f'$ induit alors un isomorphisme de $S'$-espaces analytiques munis de données de descente relativement à $p$, donc par le théorème $\ref{pf}$, $f$ est un isomorphisme de $S$-espaces analytiques.

Maintenant, $f$ est un monomorphisme si et seulement le morphisme diagonal $\Delta_f:Y\to Y\times_X Y$ est un isomorphisme, et comme $\Delta_{f'}=\Delta_f\times_S S'$, par descente des isomorphismes, $f$ est un monomorphisme si et seulement si $f'$ l'est.

Supposons que le morphisme $p$ est topologiquement compact. Par la proposition 3.2.17 de \autocite{ducros2014structure}, une immersion compacte est simplement un monomorphisme compact. Il suffit donc de montrer que la propriété d'être un morphisme compact se descend, ce qui vient du fait que les morphisme $g:X'\to X$ (resp. $g':Y'\to Y$) obtenus à partir de $p$ en effectuant le changement de base par $X\to S$ (resp. $Y\to S$) restent surjectifs et topologiquement compacts. Ainsi, si l'on suppose $f'$ compact, et si l'on choisi un compact $K\subset X$ alors son image inverse par $f$ sur $Y$ est aussi $g'(g'^{-1}(f^{-1}(K)))=g'(f'^{-1}(g^{-1}(K)))$, qui est donc compact.

Maintenant, si $f'$ est une immersion ouverte et que $p$ est toujours supposé topologiquement compact, alors on a la relation $g^{-1}(f(Y))=f'(Y')$ qui est ouverte, donc puisque les surjections topologiquement propre entre espaces analytiques sont submersives par $\ref{submersive}$, on en déduit que l'image $f(Y)$ est ouverte, et par définition, $f$ induit par changement de base un isomorphisme $f'$ entre $Y'$ et $f'(Y')$ donc $f$ induit aussi un isomorphisme entre $Y$ et $f(Y)$ par ce qui précède, et $f$ est bien une immersion ouverte.
\end{proof}

\begin{rema} Ce théorème est loin d'être optimal : dans \autocite{conrad2019descent}, les auteurs montrent que la propriété d'être un monomorphisme est locale pour la topologie dont les flèches couvrantes sont juste les flèches surjectives.
\end{rema}

\section {Problèmes d'algébrisation}

On dit qu'un $\mathcal{A}$-espace analytique est algébrisable s'il appartient à l'image essentielle du foncteur d'analytification, et si $\mathcal{X}$ et $\mathcal{Y}$ sont deux $\mathcal{A}$-schémas localement de type fini, un morphisme $f:X:=\mathcal{X}^\textrm{an}\to Y:=\mathcal{Y}^\textrm{an}$ est dit algébrique s'il existe un morphisme de $\mathcal{A}$-schémas de $\mathcal{X}$ vers $\mathcal{Y}$ dont l'analytification est $f$. Si $\mathcal{X}$ est un $\mathcal{A}$-schéma localement de type fini, on dit qu'une partie localement constructible $E\subset \mathcal{X}^{\textrm{an}}$ est algébrique si c'est l'image réciproque ensembliste par la flèche d'analytification d'une partie localement constructible de $\mathcal{X}$. La fin de cette sous-section sera occupée par la démonstration du résultat suivant, qui fait l'objet du théorème $\ref{algmorph}$ : la propriété pour un morphisme d'être algébrique est locale pour la topologie sur la catégorie des espaces $k$-affinoïdes dont les flèches couvrantes sont les flèches plates et surjectives. On démontre d'abord quelques lemmes qui serviront au cours de la preuve de ce résultat.

\begin{lemm} \label{Zloc}
Considérons une algèbre $k$-affinoïde $\mathcal{A}$, un $\mathcal{A}$-schéma localement de type fini $\mathcal{X}$ et un recouvrement de $\mathcal{X}$ par des ouverts affines $(\mathcal{X}_i)_{i\in I}$. Alors une partie $E\subset \mathcal{X}^\mathrm{an}$ est localement constructible et algébrique si et seulement si pour tout $i\in I$, la partie $E\cap \mathcal{X}_i^\mathrm{an}$ est une partie algébrique constructible de $\mathcal{X}^\textrm{an}_i$.
\end{lemm}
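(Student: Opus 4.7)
Le plan consiste à démontrer les deux implications séparément. Les deux ingrédients principaux seront la fonctorialité du foncteur image réciproque $\pi^{-1}$ par rapport aux ouverts $\mathcal{X}_i$, et la surjectivité ensembliste de la flèche canonique $\pi:\mathcal{X}^{\mathrm{an}}\to \mathcal{X}$, rappelée dans l'introduction.

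Pour l'implication directe, je partirai de $E=\pi^{-1}(F)$ avec $F$ localement constructible dans $\mathcal{X}$. La fonctorialité de la préimage donne immédiatement $E\cap \mathcal{X}_i^{\mathrm{an}}=\pi_i^{-1}(F\cap \mathcal{X}_i)$, et il ne restera qu'à observer que $F\cap \mathcal{X}_i$ est constructible dans $\mathcal{X}_i$. L'algèbre $\mathcal{A}$ étant noethérienne, $\mathcal{X}$ est un schéma localement noethérien, et par quasi-compacité chaque ouvert affine $\mathcal{X}_i$ est un schéma noethérien ; or sur un schéma noethérien, les notions de partie localement constructible et de partie constructible coïncident.

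Pour la réciproque, étant données pour chaque $i\in I$ une partie constructible $F_i\subset \mathcal{X}_i$ telle que $E\cap \mathcal{X}_i^{\mathrm{an}}=\pi_i^{-1}(F_i)$, le plan est de recoller les $F_i$ en une partie localement constructible $F\subset \mathcal{X}$ vérifiant $\pi^{-1}(F)=E$. L'ingrédient essentiel sera que, la flèche d'analytification $\pi$ étant surjective, l'application $G\mapsto \pi^{-1}(G)$ définie sur l'ensemble des parties de $\mathcal{X}$ est injective. Appliquée aux parties $F_i\cap \mathcal{X}_j$ et $F_j\cap \mathcal{X}_i$ de $\mathcal{X}_i\cap \mathcal{X}_j$, qui ont toutes deux pour préimage la partie $E\cap (\mathcal{X}_i\cap \mathcal{X}_j)^{\mathrm{an}}$, cette injectivité garantira leur coïncidence et permettra le recollement des $F_i$ en une partie unique $F\subset \mathcal{X}$ de trace $F_i$ sur chaque $\mathcal{X}_i$. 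Cette partie sera localement constructible par construction, et l'égalité $\pi^{-1}(F)=E$ se vérifiera sur le recouvrement $(\mathcal{X}_i^{\mathrm{an}})_{i\in I}$ de $\mathcal{X}^{\mathrm{an}}$.

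Le seul point non complètement routinier est l'étape de recollement dans la réciproque, mais elle se réduit entièrement à l'injectivité de l'application image réciproque, laquelle résulte directement de la surjectivité ensembliste de $\pi$ ; aucune hypothèse supplémentaire, comme la propreté, n'est nécessaire.
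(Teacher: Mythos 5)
Votre démonstration est correcte et suit essentiellement la même stratégie que celle du texte : fonctorialité de l'image réciproque pour le sens direct, et utilisation de la surjectivité ensembliste de la flèche d'analytification (sous la forme de l'injectivité de $G\mapsto\pi^{-1}(G)$) pour garantir la coïncidence des $F_i$ sur les intersections et permettre le recollement. La précision sur la noethérianité des $\mathcal{X}_i$ pour identifier constructible et localement constructible est un détail que le texte laisse implicite, mais l'argument est identique.
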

\begin{proof}[Preuve]

Si $E\subset \mathcal{X}^\textrm{an}$ est localement constructible et algébrique, alors il existe $E'\subset \mathcal{X}$ localement constructible dont l'image inverse par la flèche d'analytification est $E$. Maintenant, on a $E\cap \mathcal{X}_i^\textrm{an}=(E'\cap \mathcal{X}_i)^\textrm{an}$ qui est donc une partie constructible algébrique de $\mathcal{X}_i^\textrm{an}$.

Réciproquement, si, pour tout $i\in I$, on dispose d'une partie constructible $E_i\subset \mathcal{X}_i$ dont l'analytification est $E\cap \mathcal{X}_i^\textrm{an}$, alors par surjectivité du morphisme d'analytification $m:\mathcal{X}^\textrm{an}\to \mathcal{X}$, on a l'égalité $E_i\cap \mathcal{X}_i\cap \mathcal{X}_j=m(m^{-1}(E_i\cap \mathcal{X}_i\cap \mathcal{X}_j))=m(E_i^\textrm{an}\cap \mathcal{X}_i^\textrm{an}\cap \mathcal{X}_j^\textrm{an})=m(E\cap\mathcal{X}_i^\textrm{an} \cap \mathcal{X}_j^\textrm{an})=E_j\cap \mathcal{X}_i \cap \mathcal{X}_j$, donc la collection ensembliste de partie $E_i\subset\mathcal{X}_i$ se recolle en une partie $E'\subset \mathcal{X}$ qui est localement constructible. De plus, on a pour tout $i\in I$ l'égalité ${E'}^\textrm{an}\cap \mathcal{X}_i^\textrm{an}=(E'\cap \mathcal{X}_i)^\textrm{an}=E_i^\textrm{an}=E\cap \mathcal{X}_i^\textrm{an}$, donc ${E'}^\textrm{an}=E$ et la partie $E$ est bien algébrique et localement constructible.
\end{proof}

\begin{lemm} Le foncteur d'analytification de la catégorie des $\mathcal{A}$-schémas localement de type fini vers la catégorie des espaces $\mathcal{A}$-analytiques est fidèle. \label{analfid}
\end{lemm}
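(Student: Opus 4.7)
La stratégie est de me ramener au cas où les schémas $\mathcal{X}$ et $\mathcal{Y}$ sont affines, puis d'exploiter l'injectivité de la flèche sur les sections globales induite par l'analytification, qui découle du caractère fidèlement plat de la flèche canonique $\pi_{\mathcal{X}} : \mathcal{X}^{\mathrm{an}} \to \mathcal{X}$ rappelé dans le paragraphe GAGA.

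Étant donnés $f, g : \mathcal{X} \to \mathcal{Y}$ deux morphismes de $\mathcal{A}$-schémas vérifiant $f^{\mathrm{an}} = g^{\mathrm{an}}$, la fonctorialité de l'analytification fournit l'égalité $f \circ \pi_{\mathcal{X}} = \pi_{\mathcal{Y}} \circ f^{\mathrm{an}} = \pi_{\mathcal{Y}} \circ g^{\mathrm{an}} = g \circ \pi_{\mathcal{X}}$ comme morphismes d'espaces localement annelés. La surjectivité ensembliste de $\pi_{\mathcal{X}}$ entraîne déjà que $f$ et $g$ coïncident au niveau ensembliste. La question étant locale sur $\mathcal{X}$ et sur $\mathcal{Y}$, on couvrirait alors $\mathcal{Y}$ par des ouverts affines $\mathcal{Y}_j$ et $f^{-1}(\mathcal{Y}_j) = g^{-1}(\mathcal{Y}_j)$ par des ouverts affines de $\mathcal{X}$, pour se ramener à la situation où $\mathcal{X} = \Spec B$ et $\mathcal{Y} = \Spec C$ avec $B, C$ des $\mathcal{A}$-algèbres de type fini.

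Dans ce cadre affine, les morphismes $f$ et $g$ correspondent à des morphismes d'anneaux $f^{\sharp}, g^{\sharp} : C \to B$, et la condition $f^{\mathrm{an}} = g^{\mathrm{an}}$ entraîne l'égalité des compositions $C \to B \to \Gamma(\mathcal{X}^{\mathrm{an}}, \mathcal{O}_{\mathcal{X}^{\mathrm{an}}})$. Il suffirait donc de démontrer que la flèche naturelle $B \to \Gamma(\mathcal{X}^{\mathrm{an}}, \mathcal{O}_{\mathcal{X}^{\mathrm{an}}})$ est injective ; c'est là le point technique principal. La platitude de $\pi_{\mathcal{X}}$ garantit que pour tout $x \in \mathcal{X}$ et tout antécédent $y \in \mathcal{X}^{\mathrm{an}}$, la flèche locale $\mathcal{O}_{\mathcal{X}, x} \to \mathcal{O}_{\mathcal{X}^{\mathrm{an}}, y}$ est fidèlement plate (car plate, locale et entre anneaux locaux), donc injective. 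Jointe à la surjectivité de $\pi_{\mathcal{X}}$, cette propriété permet de déduire qu'un élément $b \in B$ d'image nulle dans les sections globales analytiques s'annule dans chaque localisé $\mathcal{O}_{\mathcal{X}, x} = B_{\mathfrak{p}_x}$, et comme la flèche classique $B \hookrightarrow \prod_{\mathfrak{p}} B_{\mathfrak{p}}$ est injective, cela donnerait $b = 0$ et conclurait la démonstration.
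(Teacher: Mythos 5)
Votre démonstration est correcte et suit pour l'essentiel la même démarche que celle de l'article : réduction au cas affine par surjectivité ensembliste et localité de l'assertion, puis injectivité de $B\to\Gamma(\mathcal{X}^{\mathrm{an}},\mathcal{O}_{\mathcal{X}^{\mathrm{an}}})$ déduite de la platitude et de la surjectivité de $\mathcal{X}^{\mathrm{an}}\to\mathcal{X}$. La seule variante est cosmétique : vous établissez cette injectivité fibre à fibre (un homomorphisme local plat d'anneaux locaux est fidèlement plat donc injectif, et $B\hookrightarrow\prod_{\mathfrak{p}}B_{\mathfrak{p}}$), là où l'article invoque la fidélité du tiré en arrière sur les endomorphismes du faisceau structural, les deux arguments reposant sur le même ingrédient.
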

\begin{proof}[Preuve] Par surjectivité ensembliste du morphisme d'espaces localement annelé $\mathcal{X}^{\textrm{an}}\to \mathcal{X}$, si deux morphismes de $\mathcal{A}$-schémas ont même analytification, alors ils coïncident topologiquement, et comme le foncteur d'analytification préserve les immersions ouvertes, montrer que ces deux morphismes coïncident est une assertion Zariski-locale sur la base et la source, et il suffit donc de montrer que l'on a une injection $\Hom_{\mathcal{A}-\textrm{Sch}}(\mathcal{X},\mathcal{Y})\to \Hom_\mathcal{A\textrm{-ann}}(X,Y)$, avec $\mathcal{X}$ et $\mathcal{Y}$ affines, et $X$, $Y$ les deux analytifiés respectifs.

Maintenant, quitte à plonger $\mathcal{Y}$ dans $\mathbb{A}^{n,alg}_\mathcal{A}$, on peut supposer que ces deux espaces sont égaux, et puisque se donner un morphisme dans la catégorie des espaces localement annelés de $\mathcal{X}$ vers $\mathbb{A}^{n,alg}_\mathcal{A}$ est équivalent à se donner $n$ fonctions dans $\mathcal O (X)$, on peut supposer que $\mathcal{Y}$ est la droite affine algébrique de dimension 1, et l'assertion à montrer est que la flèche $\Gamma (\mathcal{X},\mathcal{O}_\mathcal{X})\to \Gamma(X,\mathcal{O}_X)$ est une injection.

Cela découle maintenant uniquement de la platitude de la flèche $X\to \mathcal X$. En effet, $\Gamma (X,O_X)$ (resp. $\Gamma (\mathcal X,O_{\mathcal{X}})$) s'identifie canoniquement à $\Hom_{\mathcal O_X}(\mathcal O_X,\mathcal{O}_X )$ (resp. $\Hom_{\mathcal O_\mathcal X}(\mathcal O_\mathcal X,\mathcal{O}_\mathcal X )$), et par platitude, un homomorphisme $f:\mathcal O_\mathcal{X}\to \mathcal O_\mathcal{X}$ est nul si et seulement si son image $\mathcal O_X\to \mathcal{O}_X$ est nulle. \qedhere 

\end{proof}

\begin{rema} Dans la preuve du résultat précédent, on a montré au passage que la flèche canonique $\Gamma (\mathcal{X},\mathcal{O}_\mathcal{X})\to \Gamma(\mathcal{X}^{\textrm{an}},\mathcal{O}_{X^{\textrm{an}}})$ est injective pour tout $\mathcal{A}$-schémas affine, et cela implique l'énoncé pour tout $\mathcal{A}$-schéma localement de type fini $\mathcal{X}$. On dit qu'une fonction $f\in \Gamma(\mathcal{X}^{\textrm{an}},\mathcal{O}_\mathcal{X}^{\textrm{an}})$ est algébrique si elle est dans l'image de $\Gamma (\mathcal{X},\mathcal{O}_\mathcal{X})$.
\end{rema}

\begin{rema} Considérons $X$ un espace $k$-analytique, et $M$ un $\mathcal{O}(X)$-module de type fini. Alors on peut comme en définition 2.3 de \autocite{maculan2018notions} associer à $M$ un faisceau de modules cohérents $\tilde{M}$ au dessus de $X$ vérifiant $\tilde{M}(D)=M\otimes_{\mathcal{O}(X)} \mathcal{O}(D)$ pour tout domaine affinoïde $D\subset X$. De plus si $X$ est cohomologiquement de Stein (pour tout faisceau cohérent de $\mathcal{O}_X$-modules $F$ et tout $q\geqslant 1$, les groupes de cohomologie $\textrm{H}^q (X,F)$ sont nuls) la flèche canonique $M\to \tilde{M}(X)$ est surjective par la proposition 2.6 de \autocite{maculan2018notions}. Maintenant, si l'on se donne ${J}$ un idéal de type fini de $\mathcal{O}(X)$, comme la limite inverse est exacte à gauche, $\tilde{J}(X)\to \mathcal{O}(X)$ est injective, donc on dispose en fait d'un isomorphisme de $\mathcal{O}(X)$-modules $J\to \tilde{J}(X)$. En particulier, cela vaut pour $\mathbb{A}^{\textrm{an},n}_\mathcal{A}$ l'espace affine analytique de dimension $n\in \mathbb{N}$ au dessus d'un espace $k$-affinoïde $\mathcal{A}$ puisque celui-ci est bien cohomologiquement de Stein. On renvoie à \autocite{maculan2018notions} pour les résultats concernant les espaces de Stein, et les différentes définitions équivalentes d'espaces de Stein.

\end{rema}
La proposition suivante constitue le cœur de la preuve du théorème $\ref{algmorph}$, puisqu'elle permet de traiter le cas où l'espace $\mathcal{X}$ est affine, et $\mathcal{Y}$ est la droite algébrique affine.
\begin{prop} Considérons un morphisme fidèlement plat $p:\mathcal{M(B)}\to \mathcal{M(A)}$ entre espaces $k$-affinoïdes, $\mathcal{X}$ un $\mathcal{A}$-schéma affine de type fini et $f\in \Gamma(\mathcal{X}^{\mathrm{an}},\mathcal{O}_{\mathcal{X}^{\mathrm{an}}})$ une fonction sur l'analytifié de $\mathcal{X}$. Alors $f$ est algébrique sur $\mathcal{X}^{\mathrm{an}}$ si et seulement si son image dans $\Gamma(\mathcal{X}_{\mathcal{B}}^{\mathrm{an}},\mathcal{O}_{\mathcal{X}_{\mathcal{B}}^{\mathrm{an}}})$ est algébrique. \label{coeur}
\end{prop}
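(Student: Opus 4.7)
Le sens direct est immédiat : si $f=F^{\mathrm{an}}$ pour $F\in R:=\mathcal{O}(\mathcal{X})$, alors $F\otimes 1 \in R\otimes_\mathcal{A} \mathcal{B}=\mathcal{O}(\mathcal{X}_\mathcal{B})$ s'analytifie en $f_\mathcal{B}$, qui est donc bien algébrique.

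Pour la réciproque, je suppose $f_\mathcal{B}$ algébrique et je choisis $F\in R\otimes_\mathcal{A} \mathcal{B}=\mathcal{O}(\mathcal{X}_\mathcal{B})$ dont l'analytification coïncide avec $f_\mathcal{B}$, ce qui est possible d'après le lemme $\ref{analfid}$ appliqué à $\mathcal{X}_\mathcal{B}$. Mon plan est de descendre $F$ jusqu'à un élément de $R$ au moyen de la descente fidèlement plate schématique. Puisque $\mathcal{A}\to \mathcal{B}$ est fidèlement plat par le rappel \og Platitude analytique et schématique \fg, la suite $R \to R\otimes_\mathcal{A} \mathcal{B} \rightrightarrows R\otimes_\mathcal{A} \mathcal{B}\otimes_\mathcal{A} \mathcal{B}$ est exacte, et il suffit donc de vérifier la condition de cocycle $p_1^*F=p_2^*F$ dans $R\otimes_\mathcal{A} \mathcal{B}\otimes_\mathcal{A} \mathcal{B}=\mathcal{O}(\mathcal{X}\times_\mathcal{A}\mathrm{Spec}(\mathcal{B}\otimes_\mathcal{A}\mathcal{B}))$.

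Pour cela, je passe par l'analytifié. On dispose de la composée canonique
$$R\otimes_\mathcal{A} \mathcal{B}\otimes_\mathcal{A} \mathcal{B}\xrightarrow{\phi} R\otimes_\mathcal{A} (\mathcal{B}\hat{\otimes}_\mathcal{A} \mathcal{B})=\mathcal{O}(\mathcal{X}_{\mathcal{B}\hat{\otimes}_\mathcal{A} \mathcal{B}})\hookrightarrow \mathcal{O}(\mathcal{X}_{\mathcal{B}\hat{\otimes}_\mathcal{A} \mathcal{B}}^{\mathrm{an}}),$$
dont la dernière flèche est injective d'après le lemme $\ref{analfid}$ et la remarque qui le suit, appliqués à $\mathcal{X}_{\mathcal{B}\hat{\otimes}_\mathcal{A}\mathcal{B}}$. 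Les images respectives de $p_1^*F$ et $p_2^*F$ dans $\mathcal{O}(\mathcal{X}_{\mathcal{B}\hat{\otimes}_\mathcal{A} \mathcal{B}}^{\mathrm{an}})$ coïncident toutes deux avec le tiré en arrière de $f\in\mathcal{O}(\mathcal{X}^{\mathrm{an}})$ par le morphisme structural $\mathcal{X}_{\mathcal{B}\hat{\otimes}_\mathcal{A} \mathcal{B}}^{\mathrm{an}}\to \mathcal{X}^{\mathrm{an}}$ (qui factorise chaque projection) ; elles sont donc égales, puis par injection analytique elles sont égales dans $\mathcal{O}(\mathcal{X}_{\mathcal{B}\hat{\otimes}_\mathcal{A} \mathcal{B}})$.

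L'obstacle principal est de remonter cette égalité à travers $\phi$, c'est-à-dire d'établir que $\phi(p_1^*F-p_2^*F)=0$ entraîne $p_1^*F=p_2^*F$. Je prévois de procéder en deux temps : d'abord pour $\mathcal{X}=\mathbb{A}^n_\mathcal{A}$, où $\mathcal{O}(\mathbb{A}^{n,\mathrm{an}}_\mathcal{A})$ s'identifie aux séries entières à coefficients dans $\mathcal{A}$, et où l'unicité de la représentation en série combinée à l'injectivité de $\mathcal{A}\to \mathcal{B}$ issue de la fidèle platitude permet de conclure directement ; puis, pour un $\mathcal{X}$ affine général, je plonge $\mathcal{X}\hookrightarrow \mathbb{A}^n_\mathcal{A}$ comme sous-schéma fermé d'idéal $I$ et j'exploite la suite exacte stéinienne $0\to I\cdot \mathcal{O}(\mathbb{A}^{n,\mathrm{an}}_\mathcal{A})\to \mathcal{O}(\mathbb{A}^{n,\mathrm{an}}_\mathcal{A})\to \mathcal{O}(\mathcal{X}^{\mathrm{an}})\to 0$ fournie par la remarque qui précède la proposition pour ramener le problème au cas de l'espace affine. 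Une fois la condition de cocycle vérifiée, la descente fournit $F_0\in R$ avec $F_0\otimes 1=F$ ; enfin, la fidèle platitude du changement de base $\mathcal{X}_\mathcal{B}^{\mathrm{an}}\to \mathcal{X}^{\mathrm{an}}$ (comme changement de base du morphisme fidèlement plat $p$) entraîne l'injectivité de $\mathcal{O}(\mathcal{X}^{\mathrm{an}})\to \mathcal{O}(\mathcal{X}_\mathcal{B}^{\mathrm{an}})$, et puisque $F_0^{\mathrm{an}}$ et $f$ ont tous deux pour base-changée $f_\mathcal{B}$, on conclut que $f=F_0^{\mathrm{an}}$.
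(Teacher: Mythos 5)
Votre stratégie est réellement différente de celle de l'article : vous voulez descendre la fonction régulière $F\in\mathcal{O}(\mathcal{X}_\mathcal{B})$ par descente fidèlement plate \emph{schématique} le long de $\mathcal{A}\to\mathcal{B}$, en vérifiant la condition de cocycle $p_1^*F=p_2^*F$ dans $R\otimes_\mathcal{A}\mathcal{B}\otimes_\mathcal{A}\mathcal{B}$ après passage à l'analytifié. L'article n'utilise aucune descente et ne forme jamais le produit tensoriel double : il relève $f$ en une série convergente $f_1$ sur $\mathcal{A}$ et $f_\mathcal{B}$ en un polynôme $f_2$ sur $\mathcal{B}$, écrit $f_1-f_2\in\mathcal{J}_\mathcal{B}\mathcal{O}(\mathbb{A}^{n,\mathrm{an}}_\mathcal{B})$, scinde les coefficients, puis corrige $f_1$ en un polynôme de $\mathcal{A}[\underline{T}]$ grâce à l'égalité $\mathcal{J}\mathcal{O}(\mathbb{A}^{n,\mathrm{an}}_\mathcal{B})\cap\mathcal{O}(\mathbb{A}^{n,\mathrm{an}}_\mathcal{A})=\mathcal{J}\mathcal{O}(\mathbb{A}^{n,\mathrm{an}}_\mathcal{A})$ et à $\mathcal{O}(\mathbb{A}^{n,\mathrm{an}}_\mathcal{A})\cap\mathcal{B}[\underline{T}]=\mathcal{A}[\underline{T}]$.

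Le point bloquant de votre proposition est précisément celui que vous désignez comme l'obstacle principal, et les outils que vous invoquez ne le résolvent pas. Pour $\mathcal{X}=\mathbb{A}^n_\mathcal{A}$, l'injectivité de $\phi$ équivaut, en identifiant les coefficients, à l'injectivité de $\mathcal{B}\otimes_\mathcal{A}\mathcal{B}\to\mathcal{B}\hat{\otimes}_\mathcal{A}\mathcal{B}$ ; l'injectivité de $\mathcal{A}\to\mathcal{B}$ et l'unicité du développement en série n'y contribuent en rien, puisque les coefficients de $p_1^*F-p_2^*F$ vivent dans $\mathcal{B}\otimes_\mathcal{A}\mathcal{B}$ et non dans $\mathcal{A}$. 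Or savoir si le produit tensoriel non complété s'injecte dans le produit tensoriel complété au dessus d'une base affinoïde générale (de façon équivalente, si la semi-norme tensorielle sur $\mathcal{B}\otimes_\mathcal{A}\mathcal{B}$ est une norme) est exactement l'une des deux difficultés que l'introduction identifie comme empêchant de transposer \emph{verbatim} la descente schématique ; ce fait n'est établi nulle part dans le texte (le corollaire 3.7.3.6 de Bosch--Güntzer--Remmert ne s'applique que lorsque l'un des facteurs est fini sur la base), et la preuve de l'article est construite pour l'éviter. Pour $\mathcal{X}$ affine général la difficulté s'aggrave : $R$ n'étant pas plat sur $\mathcal{A}$, la réduction par la suite exacte steinienne exigerait en outre une égalité du type $\mathcal{J}\cdot(\mathcal{B}\hat{\otimes}_\mathcal{A}\mathcal{B})[\underline{T}]\cap(\mathcal{B}\otimes_\mathcal{A}\mathcal{B})[\underline{T}]=\mathcal{J}\cdot(\mathcal{B}\otimes_\mathcal{A}\mathcal{B})[\underline{T}]$, analogue de l'égalité clef de l'article mais relative à la flèche $\mathcal{B}\otimes_\mathcal{A}\mathcal{B}\to\mathcal{B}\hat{\otimes}_\mathcal{A}\mathcal{B}$, dont la platitude n'est pas acquise. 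Les autres étapes (sens direct, égalité des deux images dans $\mathcal{O}(\mathcal{X}^{\mathrm{an}}_{\mathcal{B}\hat{\otimes}_\mathcal{A}\mathcal{B}})$, injectivité de $\mathcal{O}(\mathcal{X}^{\mathrm{an}})\to\mathcal{O}(\mathcal{X}^{\mathrm{an}}_\mathcal{B})$ pour conclure) sont correctes, mais en l'état la condition de cocycle n'est pas vérifiée et la descente schématique ne peut pas être amorcée.
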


\begin{proof}[Démonstration]
On commence par traiter le cas où $\mathcal{X}=\mathbb{A}_\mathcal{A}^{n}$ est l'espace affine de dimension $n\in \mathbb{N}$. On dispose alors d'une description simple de $\Gamma(\mathcal{X}^{\textrm{an}},\mathcal{O}_\mathcal{X}^{\textrm{an}})$ en tant que sous ensemble de $\mathcal{A}[[T_1,..,T_n]]$ puisque l'on a $\Gamma(\mathcal{X}^{\textrm{an}},\mathcal{O}_\mathcal{X}^{\textrm{an}})=\{\sum a_i \underline{T}^i\mid a_i\in \mathcal{A},\textrm{max} \Vert a_i \Vert \underline{r}^i <+\infty, \forall \underline{r}\in (\mathbb{R}_{+}^*)\}$. Alors la flèche $\Gamma(\mathcal{X},\mathcal{O}_\mathcal{X})\to \Gamma(\mathcal{X}^{\textrm{an}},\mathcal{O}_\mathcal{X}^{\textrm{an}})$ est simplement l'inclusion de $\mathcal{A}[T_1,..T_n]$ dans cet espace. Maintenant, si l'on se donne $f\in \Gamma(\mathcal{X}^{\textrm{an}},\mathcal{O}(\mathcal{X}^{\textrm{an}}))$ dont l'image $f_\mathcal{B}$ dans $\Gamma(\mathcal{X}_{\mathcal{B}}^{\textrm{an}},\mathcal{O}_{\mathcal{X}_{\mathcal{B}}^{\textrm{an}}})$ est algébrique, alors l'image de $f_\mathcal{B}$ dans $\mathcal{B}[[T_1,..T_n]]$ est un polynôme, donc l'image de $f$ dans $\mathcal{A}[[T_1,..T_n]]$ est aussi un polynôme, ce qui montre que $f$ est algébrique car on a une inclusion $\mathcal{A}\hookrightarrow\mathcal{B}$ par fidèle platitude schématique de cette flèche.

Maintenant, on traite le cas général, $\mathcal{X}$ est juste supposé affine. Par définition, il existe une immersion fermée $\mathcal{X}\hookrightarrow \mathbb{A}_\mathcal{A}^{n}$. Notons $\mathcal{J}$ le module des sections globales du faisceau d'idéaux qui définit $\mathcal{X}$ en tant que sous-schéma fermé de $\mathbb{A}_\mathcal{A}^{n}$. Alors comme $\mathcal{A}[T_1,..,T_n]$ est noethérien, il existe un nombre fini de polynômes $P_1,..,P_m$ qui engendrent l'idéal $\mathcal{J}\subset \mathcal{A}[T_1,..,T_n]$. Montrons l'égalité préliminaire : 
\setcounter{subsection}{1}
\setcounter{subsubsection}{1}
\begin{equation}\mathcal{J} \mathcal{O}(\mathbb{A}_\mathcal{B}^{n,\textrm{an}})\cap \mathcal{O}(\mathbb{A}_\mathcal{A}^{n,\textrm{an}})=\mathcal{J} \mathcal{O}(\mathbb{A}_{\mathcal{A}}^{n,an}).\label{eg0}
\end{equation}

 Une inclusion est claire, soit donc $D\in \mathcal{J} \mathcal{O}(\mathbb{A}_\mathcal{B}^{n,\textrm{an}})\cap \mathcal{O}(\mathbb{A}_\mathcal{A}^{n,\textrm{an}}) $. Alors pour tout polyrayon $\underline{r}\in (\mathbb{R}_+^*)^n$, l'image de $D$ dans $\mathcal{B}\{\underline{r}^{-1}\underline{T}\}$ est dans $\mathcal{J}\mathcal{B}\{\underline{r}^{-1}\underline{T}\}\cap \mathcal{A}\{\underline{r}^{-1}\underline{T}\}$, et par fidèle platitude de la flèche $\mathcal{A}\{\underline{r}^{-1}\underline{T}\}\to \mathcal{B}\{\underline{r}^{-1}\underline{T}\}$, l'élément $D$ est en fait dans $\mathcal{J}\mathcal{A}\{\underline{r}^{-1}\underline{T}\}$. Comme cela vaut pour tout polyrayon positif, comme on a la platitude de la flèche $\Gamma (\mathbb{A}_{\mathcal{A}}^{n,an},\mathcal{O}_{\mathbb{A}_{\mathcal{A}}^{n,an}}) \to \mathcal{A}\{\underline{r}^{-1}\underline{T}\}$ par la proposition 2.6 de \autocite{maculan2018notions}, on en déduit que $D\in \widetilde{\mathcal{J}\mathcal{O}\mathbb{A}_\mathcal{A}^{n,\textrm{an}}}(\mathbb{A}_\mathcal{A}^{n,\textrm{an}})$, et par la remarque qui précède le lemme, on a en fait $D\in \mathcal{J}\mathcal{O}(\mathbb{A}_\mathcal{A}^{n,\textrm{an}})$ puisque $\mathcal{J}\mathcal{O}(\mathbb{A}_\mathcal{A}^{n,\textrm{an}})$ est bien un $\mathcal{O}(\mathbb{A}_{\mathcal{A}}^{n,an})$-module de type fini. 

Maintenant, on dispose du diagramme suivant : 

\begin{equation*}
    \begin{tikzpicture}
    
    \node (A0) at (-4,1) {$0$};
    \node (A1) at (-2,1) {$\mathcal{J}$};
    \node (A2) at (0,1) {$\mathcal{O}(\mathbb{A}_\mathcal{A}^n)$};
    \node (A3) at (2,1) {$\mathcal{O}(\mathcal{X})$};
    \node (A4) at (4,1) {$0$};
    
    \node (B0) at (-4,-1) {$0$};
    \node (B1) at (-2,-1) {$\mathcal{J} \mathcal{O}(\mathbb{A}_\mathcal{A}^{n,\textrm{an}})$};
    \node (B2) at (0,-1) {$\mathcal{O}(\mathbb{A}_\mathcal{A}^{n,\textrm{an}})$};
    \node (B3) at (2,-1) {$\mathcal{O}(\mathcal{X}^{\textrm{an}})$};
    \node (B4) at (4,-1) {$0$};
    
    \node (C0) at (-2.5,2) {$0$};
    \node (C1) at (-0.5,2) {$\mathcal{J}_\mathcal{B}$};
    \node (C2) at (1.5,2) {$\mathcal{O}(\mathbb{A}_\mathcal{B}^n)$};
    \node (C3) at (3.5,2) {$\mathcal{O}(\mathcal{X}_\mathcal{B})$};
    \node (C4) at (5.5,2) {$0$};
    
    \node[color=gray!95] (D0) at (-2.5,0) {$0$};
    \node[color=gray!95] (D1) at (-0.5,0) {$\mathcal{J}_\mathcal{B} \mathcal{O}(\mathbb{A}_\mathcal{B}^{n,\textrm{an}})$};
    \node[color=gray!95] (D2) at (1.5,0) {$\mathcal{O}(\mathbb{A}_\mathcal{B}^{n,\textrm{an}})$};
    \node[color=gray!95] (D3) at (3.5,0) {$\mathcal{O}(\mathcal{X}_\mathcal{B}^{\textrm{an}})$};
    \node[color=gray!95] (D4) at (5.5,0) {$0$};
    
    \draw[->] (A0) -- node[midway, above]{} (A1);
    \draw[->] (A1) -- node[midway, above]{$ {}$} (A2);
    \draw[->] (A2) -- node[midway, above]{$ $} (A3);
    \draw[->] (A3) -- node[midway, above]{} (A4);
    \draw[->] (B0) -- node[midway, above]{} (B1);
    \draw[->] (B1) -- node[midway, above]{$ $} (B2);
    \draw[->] (B2) -- node[midway, above]{$ $} (B3);
    \draw[->] (B3) -- node[midway, above]{} (B4);
    
    \draw[->] (C0) -- node[midway, above]{} (C1);
    \draw[->] (C1) -- node[midway, above]{$ $} (C2);
    \draw[->] (C2) -- node[midway, above]{$ $} (C3);
    \draw[->] (C3) -- node[midway, above]{} (C4);
    
    \draw[->,color=gray!95] (D0) -- node[midway, above]{} (D1);
    \draw[->,color=gray!95] (D1) -- node[midway, above]{$ $} (D2);
    \draw[->,color=gray!95] (D2) -- node[midway, above]{$ $} (D3);
    \draw[->,color=gray!95] (D3) -- node[midway, above]{} (D4);
    
    \draw[right hook->] (A1) -- node[midway, left]{} (B1);
        \draw[right hook->] (A2) -- node[midway, left]{} (B2);
            \draw[right hook->] (A3) -- node[midway, left]{} (B3);

    \draw[right hook->] (A1) -- node[midway, left]{} (C1);
        \draw[right hook->] (A2) -- node[midway, left]{} (C2);
            \draw[right hook->] (A3) -- node[midway, left]{} (C3);
            
    \draw[right hook->,color=gray!75] (C1) -- node[midway, left]{} (D1);
        \draw[right hook->,color=gray!75] (C2) -- node[midway, left]{} (D2);
            \draw[right hook->,color=gray!75] (C3) -- node[midway, left]{} (D3);
            
    \draw[right hook->] (B1) -- node[midway, left]{} (D1);
        \draw[right hook->] (B2) -- node[midway, left]{} (D2);
            \draw[right hook->] (B3) -- node[midway, left]{} (D3);
    
    \end{tikzpicture}
\end{equation*}
avec $\mathcal{J}_\mathcal{B}=\mathcal{J}\otimes_\mathcal{A}\mathcal{B}$. Chaque carré est commutatif, et les deux lignes du haut sont exactes par définition et par fidèle platitude schématique du morphisme $\Spec (\mathcal{B})\to \Spec\mathcal{(A)}$. Pour tout domaine affinoïde $D\subset \mathbb{A}_\mathcal{A}^{n,\textrm{an}}$, la flèche $\mathcal{O}({\mathbb{A}_\mathcal{A}^{n,\textrm{an}}})\to \mathcal{O}(D)$ est plate par la proposition 2.6 de \autocite{maculan2018notions} donc on a l'égalité $(\tilde{J})^\textrm{an}=\widetilde{J \mathcal{O}({\mathbb{A}_\mathcal{A}^{n,\textrm{an}}})}$ et on en déduit la suite exacte de $\mathcal{O}_{\mathbb{A}_\mathcal{A}^{n,\textrm{an}}}$-modules suivante : $0\to \widetilde{J \mathcal{O}({\mathbb{A}_\mathcal{A}^{n,\textrm{an}}})}\to \mathcal{O}_{\mathbb{A}_\mathcal{A}^{n,\textrm{an}}} \to \mathcal{O}_{\mathcal{X}^{\textrm{an}}}\to 0$. Comme $\mathcal{J} \mathcal{O}(\mathbb{A}_\mathcal{A}^{n,\textrm{an}})$ est un $\mathcal{O}( \mathbb{A}_\mathcal{A}^{n,\textrm{an}})$-module de type fini, par la remarque qui précède le lemme, on a l'égalité $\widetilde{\mathcal{J}\mathcal{O}( \mathbb{A}_\mathcal{A}^{n,\textrm{an}})}( \mathbb{A}_\mathcal{A}^{n,\textrm{an}})=\mathcal{J}\mathcal{O}( \mathbb{A}_\mathcal{A}^{n,\textrm{an}})$, ce qui fournit l'exactitude des deux lignes du bas du diagramme en utilisant la steinitude cohomologique de $\mathbb{A}_\mathcal{A}^{n,\textrm{an}}$.

On se donne donc un élément $f\in \mathcal{O} (\mathcal{X}^{\textrm{an}})$ dont l'image $f_\mathcal{B}$ dans $\mathcal{O}(\mathcal{X}^{\textrm{an}}_\mathcal{B})$ est dans l'image de $\mathcal{O}(\mathcal{X}_\mathcal{B})$. Montrons que $f$ est dans $\mathcal{O}(\mathcal{X})$. Par surjectivité des flèches idoines, il existe un élément $f_1\in \mathcal{O}(\mathbb{A}_{\mathcal{A}}^{n,an})$ dont l'image dans $\mathcal{O}(\mathcal{X}^{\textrm{an}})$ est $f$, et un élément algébrique $f_2\in \mathcal{O}(\mathbb{A}_{\mathcal{B}}^{n})$ dont l'image dans $\mathcal{O}(\mathcal{X}^{\textrm{an}}_\mathcal{B})$ est $f_\mathcal{B}$. Maintenant, puisque les flèches idoines sont des inclusions, $f_1$ et $f_2$ définissent deux éléments de $\mathcal{O}(\mathbb{A}_{\mathcal{B}}^{n,an})$ dont l'image dans $\mathcal{O}(\mathcal{X}^\textrm{an}_\mathcal{B})$ vaut $f_\mathcal{B}$, donc on peut écrire l'égalité suivante dans $\mathcal{O}(\mathbb{A}_{\mathcal{B}}^{n,an})$ :
\begin{equation}\label{eg}
f_1-f_2=h 
\end{equation} avec $h\in \mathcal{J}_{\mathcal{B}} \mathcal{O}(\mathbb{A}_{\mathcal{B}}^{n,an})$. L'élément $f_1$ est une série formelle à coefficients dans $\mathcal{A}$, et l'élément $f_2$ est un polynôme à coefficients dans $\mathcal{B}$, tandis que l'on peut écrire $h$ comme une somme finie $h=\sum_I P_I g_I$ avec $g_I\in \mathcal{O}(\mathbb{A}_{\mathcal{B}}^{n,an})$ et $P_I\in \mathcal{A}[T_1,..,T_n]$ les polynômes qui engendrent l'idéal $\mathcal{J}\subset \mathcal{A}[T_1,..,T_n]$.

Puisque tous les éléments de l'égalité $\ref{eg}$ sont des séries formelles, en identifiant les coefficients, on en déduit qu'il existe un indice à partir duquel tous les coefficients de $h$ sont dans $\mathcal{A}$, ce qui nous fournit l'existence d'une décomposition $g_I=g_{I_1}+g_{I_2}$ avec $g_{I_1}\in  \mathcal{O}(\mathbb{A}_{\mathcal{B}}^{n,an})$ et $g_{I_2}\in \mathcal{B}[T_1,..,T_n]$ vérifiant $\sum_I P_I g_{I_1}\in  \mathcal{O}(\mathbb{A}_{\mathcal{A}}^{n,an})$, et par l'égalité $\ref{eg0}$, on en déduit que l'on a même $\sum_I P_I g_{I_1}\in \mathcal{J}\mathcal{O}(\mathbb{A}_\mathcal{A}^{n,\textrm{an}})$, ce qui fournit un nombre fini d'éléments $g'_{I_1}\in \mathcal{O}(\mathbb{A}_\mathcal{A}^{n,\textrm{an}}) $ tels que $\sum_I P_I g_{I_1}=\sum_I P_I g'_{I_1}$. 

Maintenant, posons $f'_1=f_1-\sum_I P_I g'_{I_1}$ et $f'_2=f_2-\sum_{I} P_I g_{I_2}$. Alors par $\ref{eg}$, on a l'égalité $f'_1=f'_2$, donc $f'_1$ est dans $\mathcal{O}(\mathbb{A}_{\mathcal{A}}^{n,an})\cap \mathcal{O}(\mathbb{A}_{\mathcal{B}}^{n})=\mathcal{O}(\mathbb{A}_{\mathcal{A}}^{n})$ par le début de la démonstration, et cet élément $f'_1$ diffère de $f_1$ d'un élément de $\mathcal{J}\mathcal{O}(\mathbb{A}_{\mathcal{A}}^{n,an})$, donc l'image de $f'_1$ dans $\mathcal{O}(\mathcal{X}^{\textrm{an}})$ est aussi $f$, et comme $f'_1$ est un polynôme, on en déduit que $f$ appartient à $\mathcal{O}(\mathcal{X})$, et $f$ est algébrique ce que l'on voulait montrer.
\end{proof}

On énonce une dernière proposition qui nous permettra d'effectuer une réduction décisive de problème dans la preuve du théorème $\ref{algmorph}$. La preuve repose sur le schéma de démonstration du théorème $\ref{fibfi}$ : pour démontrer la proposition, on la démontre pour trois classes de morphismes plus simple, puis on en déduit le cas d'un morphisme fidèlement plat.

\begin{lemm} \label{lemmepartie} Considérons $p:\mathcal{M(B)}\to \mathcal{M(A)}$ un morphisme fidèlement plat entre espaces $k$-affinoïdes et $\mathcal{X}$ un $\mathcal{A}$-schéma localement de type fini. Alors une partie $E\subset \mathcal{X}^{\mathrm{an}}$ est localement constructible algébrique si et seulement si son image inverse $E_\mathcal{B}={(q^\mathrm{an}})^{-1}(E)$ est localement constructible, algébrique, où $q^\mathrm{an}$ est le morphisme $q^\textrm{an}:\mathcal{X}^\mathrm{an}_\mathcal{B}\to \mathcal{X}^{\mathrm{an}}$ obtenu à partir de $p$ par changement de base.
\end{lemm}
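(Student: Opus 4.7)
Le sens direct est immédiat~: si $E$ est l'image réciproque par l'analytification d'une partie localement constructible $E'\subset \mathcal{X}$, alors $E_\mathcal{B}$ provient de $E'\times_{\Spec\mathcal{A}} \Spec\mathcal{B}\subset \mathcal{X}_\mathcal{B}$, qui reste localement constructible. La difficulté est dans la réciproque, et le plan est de suivre pas à pas la stratégie de démonstration du théorème $\ref{fibfi}$.

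Tout d'abord, par le lemme $\ref{Zloc}$, on se ramène au cas où $\mathcal{X}$ est affine, donc où une partie localement constructible est constructible. Observons ensuite que la propriété à démontrer pour un morphisme $p$ se comporte bien à la composition~: si $R\stackrel{v}{\to} S \stackrel{u}{\to} T$ sont trois morphismes entre espaces $k$-affinoïdes et si la propriété vaut pour $u$ et $v$, alors elle vaut pour $u\circ v$~; et si elle vaut pour $u\circ v$ et $v$, elle vaut pour $u$. Ces stabilités élémentaires (qui se vérifient immédiatement sur la définition) permettent de reprendre la série de réductions mise en place pour démontrer $\ref{fibfi}$~: passage au cas strict non trivialement valué via une extension de polyrayon, puis au cas quasi-fini plat surjectif via le théorème de multisection 9.1.3 de \autocite{ducros2017families}, enfin aux cas spécifiques par la factorisation 8.4.6 de Ducros. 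On est ainsi ramené à vérifier l'énoncé dans trois cas particuliers~: (i) les $G$-recouvrements finis par des domaines affinoïdes, (ii) les morphismes finis plats surjectifs, et (iii) les extensions de polyrayon $\mathcal{M}(\mathcal{A}_r)\to \mathcal{M(A)}$.

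Le cas (i) résulte immédiatement du lemme $\ref{Zloc}$. Pour les cas (ii) et (iii), l'idée est d'invoquer la descente fidèlement plate quasi-compacte schématique pour les parties constructibles. Dans ces deux cas, le morphisme d'anneaux $\mathcal{A}\to \mathcal{B}$ est fidèlement plat, et le morphisme de schémas noethériens $\Spec \mathcal{B}\to \Spec \mathcal{A}$ est donc fpqc. Supposons donnée une partie localement constructible $\tilde{E}_\mathcal{B}\subset \mathcal{X}_\mathcal{B}$ telle que $\tilde{E}_\mathcal{B}^\textrm{an} = E_\mathcal{B}$. Par surjectivité ensembliste du morphisme d'analytification $\mathcal{X}_\mathcal{B}^\textrm{an}\to \mathcal{X}_\mathcal{B}$ et le fait que $E_\mathcal{B}$ soit l'image réciproque de $E$, on vérifie que $\tilde{E}_\mathcal{B}$ satisfait la condition de cocycle pour la descente le long de $\Spec \mathcal{B}\to \Spec \mathcal{A}$. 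Par descente fpqc schématique des parties constructibles, on obtient une partie localement constructible $\tilde{E}\subset \mathcal{X}$ telle que $\tilde{E}\times_\mathcal{A} \mathcal{B} = \tilde{E}_\mathcal{B}$ ensemblistement. Les deux parties $\tilde{E}^\textrm{an}$ et $E$ de $\mathcal{X}^\textrm{an}$, ayant la même image réciproque $E_\mathcal{B}$ dans $\mathcal{X}_\mathcal{B}^\textrm{an}$, coïncident par surjectivité de $\mathcal{X}_\mathcal{B}^\textrm{an}\to \mathcal{X}^\textrm{an}$, ce qui conclut.

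L'obstacle principal sera de vérifier proprement la descente fpqc des parties constructibles dans le cas (iii), où le morphisme $\mathcal{A}\to \mathcal{A}_r$ n'est pas de type fini mais seulement fidèlement plat~; on pourra s'appuyer sur les résultats généraux de EGA IV concernant la constructibilité, le caractère quasi-compact étant assuré par l'affinoïdité de la source. Il faudra également s'assurer que la compatibilité entre la partie $E_\mathcal{B}\subset \mathcal{X}_\mathcal{B}^\textrm{an}$ et sa trace $\tilde{E}_\mathcal{B}\subset \mathcal{X}_\mathcal{B}$ obtenue par analytification est bien préservée par les réductions successives, ce qui devrait découler à chaque étape de la surjectivité et de la platitude des flèches d'analytification pertinentes.
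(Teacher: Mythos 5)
Votre proposition suit le canevas général de la preuve du papier (réduction au cas affine via le lemme \ref{Zloc}, stabilité par composition, réduction aux trois classes de morphismes élémentaires par le schéma de \ref{fibfi}), mais deux des trois cas élémentaires présentent des lacunes réelles. D'abord, le cas (i) des $G$-recouvrements ne « résulte pas immédiatement du lemme \ref{Zloc} » : ce lemme concerne les recouvrements de $\mathcal{X}$ par des ouverts affines de Zariski, alors qu'ici le $G$-recouvrement $\coprod_i\mathcal{M}(\mathcal{A}_i)\to\mathcal{M}(\mathcal{A})$ de la base affinoïde induit un recouvrement de $\mathcal{X}^{\mathrm{an}}$ par les domaines analytiques $\mathcal{X}_i^{\mathrm{an}}$, qui ne sont pas des ouverts de Zariski. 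Savoir que chaque $E\cap\mathcal{X}_i^{\mathrm{an}}$ est algébrique constructible (relativement au $\mathcal{A}_i$-schéma $\mathcal{X}\times_\mathcal{A}\mathcal{A}_i$) ne fournit pas gratuitement une partie constructible de $\mathcal{X}$ : la preuve du papier doit pour cela compactifier $\mathcal{X}$ en un $\bar{\mathcal{X}}$ projectif, invoquer la proposition 10.1.12 de \autocite{ducros2017families} pour passer du $G$-localement constructible au constructible, puis GAGA sur $\bar{\mathcal{X}}$ pour algébriser. Cette étape manque entièrement.

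Ensuite, pour le cas (iii) de l'extension de polyrayon, la descente fpqc schématique bute sur un obstacle que vous signalez sans l'identifier correctement : le problème n'est pas le défaut de type fini de $\mathcal{A}\to\mathcal{A}_r$ (la descente fpqc des parties constructibles vaut pour tout morphisme fidèlement plat quasi-compact), mais le fait que la donnée de descente dont vous disposez vit sur le produit tensoriel \emph{complété} $\mathcal{A}_r\hat{\otimes}_\mathcal{A}\mathcal{A}_r$ et non sur $\mathcal{A}_r\otimes_\mathcal{A}\mathcal{A}_r$. Le morphisme $\Spec(\mathcal{A}_r\hat{\otimes}_\mathcal{A}\mathcal{A}_r)\to\Spec(\mathcal{A}_r\otimes_\mathcal{A}\mathcal{A}_r)$ n'étant pas surjectif en général, on ne peut pas transférer la condition de cocycle analytique en une condition de cocycle schématique ; et on ne peut pas non plus la vérifier directement, puisque l'on ignore encore à ce stade que $E$ provient de $\mathcal{X}$. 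C'est précisément pour contourner ce point que le papier procède tout autrement dans ce cas : il utilise la section de Shilov $\sigma$ de $\mathcal{X}^{\mathrm{an}}_r\to\mathcal{X}^{\mathrm{an}}$ et le calcul explicite $\vert f\vert(\sigma(x))=\max_M\vert f_M(x)\vert r^M$ pour décrire $E=\sigma^{-1}(E_r)$ comme la partie constructible associée aux idéaux de $C$ engendrés par les coefficients des fonctions définissant $E_r$. Seul le cas (ii) fini, plat et surjectif est correctement couvert par votre argument, la finitude y garantissant $\mathcal{B}\otimes_\mathcal{A}\mathcal{B}=\mathcal{B}\hat{\otimes}_\mathcal{A}\mathcal{B}$ ; c'est d'ailleurs en substance l'argument du papier, complété par le théorème de Chevalley pour la constructibilité de l'image.
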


\begin{proof}[Démonstration] Un sens est évident, on va démontrer l'autre sens.

Si l'on se donne un recouvrement de $\mathcal{X}$ par des ouverts affines $(\mathcal{X}_i)$, et que $E_\mathcal{B}$ est localement constructible et algébrique alors $\mathcal{X}_{i_\mathcal{B}}\cap E_\mathcal{B}$ est encore algébrique en tant que fermé de $\mathcal{X}_{i_\mathcal{B}} $, donc si l'on suppose le résultat acquis pour l'analytifié de tout schéma affine, on en déduit que $\mathcal{X}^\textrm{an}_i\cap E$ est algébrique constructible pour tout $i$, et par le lemme $\ref{Zloc}$, on en déduit que $E$ est localement constructible et algébrique. Cela nous permet donc de réduire la démonstration du théorème au cas où $\mathcal{X}$ est un schéma affine $\mathcal{X}=\Spec C$. Sur $\mathcal{X}$ ainsi que sur $\mathcal{X}^\textrm{an}$, les notions de constructibilité et de locale constructibilité coïncident par noethérianité de $\mathcal{A}$ et parce que $\mathcal{X}^\mathrm{an}$ est de dimension finie.

On va d'abord montrer le résultat si $p$ est de la forme $\mathcal{M(A}_r)\to \mathcal{M(A)}$ pour $r\in \mathbb{R}^*_+$ un polyrayon $k$-libre, puis lorsque $p$ est plat fini et surjectif puis lorsque $p$ est un $G$-recouvrement par des domaines affinoïdes. Alors la technique de démonstration donnée au théorème $\ref{fibfi}$ permettra de conclure pour un morphisme fidèlement plat quelconque.

On se donne donc d'abord un polyrayon $k$-libre $r\in (\mathbb{R}^*_+)^n$, et on va montrer que le lemme est vrai dans le cas de la flèche $p:\mathcal{M}(\mathcal{A}_r)\to \mathcal{M(A)}$. Soit donc $E$ une partie de $\mathcal{X}^\textrm{an}$ telle que $E_{r}$ soit une partie constructible algébrique de  $\mathcal{X}^\textrm{an}_r$, on veut montrer que $E$ est en fait constructible algébrique. La partie $E_r$ s'écrit comme une union finie $E_r=\bigcup_{i\in I} {U_i\cap V_i}$ avec $U_i$ (resp. $V_i$) des ouverts (resp. fermés) algébriques de $\mathcal{X}_r^\textrm{an}$. De plus, si $\sigma:\mathcal{X}^\textrm{an}\to\mathcal{X}^\textrm{an}_r$ est la section de Shilov du morphisme $q^\textrm{an}$, alors on a l'égalité $E=\sigma^{-1}({q^\textrm{an}}^{-1}(E))=\sigma^{-1}(\bigcup_{i\in I} {U_i\cap V_i})=\bigcup_{i\in I} {\sigma^{-1}(U_i\cap V_i})$. Puisque la propriété d'être constructible et algébrique est stable par union finie, on peut supposer que $E_r=U\cap V$ avec $U$ ouvert algébrique et $V$ fermé algébrique de $\mathcal{X}_r^\textrm{an}$ et montrer que $\sigma^{-1}(U\cap V)$ est constructible algébrique. Par noethérianité de $C\otimes_\mathcal{A}\mathcal{A}_r$, il existe un nombre fini de fonctions $(g_j)_{j\in J}$ (resp. $(f_j)_{j\in J}$) de $C\otimes_\mathcal{A} \mathcal{A}_r$ dont $V$ (resp. $U^\mathrm{C}$) est le lieu d'annulation. De plus, on peut écrire chaque fonction $f_j$ sous la forme $f_j=\sum_{M\in \mathbb{Z}^n} f_{jM} \underline{T}^M$ avec $f_{jM}\in C$ et chaque fonction $g_j$ sous la forme  $g_j=\sum_{M\in \mathbb{Z}^n} g_{jM} \underline{T}^M$ avec $g_{jM}\in C$, et par définition de la section de Shilov, $\vert f_j \vert (\sigma(x))=\max_{M\in \mathbb{Z}^n} \vert f_{jM}(x)\vert r^M$, et donc $E=\{x\in\mathcal{X}^\textrm{an}, \forall j\in J,\forall M \in \mathbb{Z}^n, \vert g_{jM}(x)\vert =0 \}\bigcup \{x\in\mathcal{X}^\textrm{an},\exists j \in J, \exists M\in \mathbb{Z}^n, \vert f_{jM}(x)\vert \neq 0 \}.$ Notons maintenant $\mathcal{I}\subset C$ l'idéal engendré par les $g_{jM}$ dans $C$ et $\mathcal{J}\subset C$ l'idéal engendré par les $f_{jM}$ dans $C$. L'égalité précédente montre alors que $E=({V(\mathcal{I})\bigcup V(\mathcal{J})^\mathrm{C}})^\textrm{an}$, ce qui montre que $E$ est bien algébrique constructible.

Maintenant, on suppose que $p:\mathcal{M(B)}\to\mathcal{M(A)}$ est fini, plat et surjectif, et $E$ est comme dans le paragraphe précédent une partie de $\mathcal{X}^\textrm{an}$ dont l'image inverse $E_\mathcal{B}\subset \mathcal{X}_\mathcal{B}^\textrm{an}$  est constructible algébrique. Par définition, on peut écrire $E_B$ comme l'analytification d'une partie constructible $E'$. Comme $\mathcal{B}$ est noethérien, par finitude on a l'égalité $\mathcal{B}\hat{\otimes}_\mathcal{A}\mathcal{B}=\mathcal{B}\otimes_\mathcal{A}\mathcal{B}$, et le morphisme $k_3:\mathcal{X}^\textrm{an}_{\mathcal{B}\hat{\otimes}_\mathcal{A}\mathcal{B}}\to\mathcal{X}_{\mathcal{B\otimes_\mathcal{A}\mathcal{B}}}$ s'identifie avec la flèche d'analytification. Si ${q}^{\textrm{an}}_i:\mathcal{X}^\textrm{an}_{\mathcal{B}\hat{\otimes}_\mathcal{A}\mathcal{B}}\to \mathcal{X}^\textrm{an}_\mathcal{B}$ désigne chacune des deux projections, alors on a des données de descente ensemblistes canoniques sur $E_B$, donc on dispose de l'égalité ${q^\textrm{an}}^{-1}_1(E_\mathcal{B})={q^{\textrm{an}}}^{-1}_2(E_\mathcal{B})$, et si l'on note $q_i:\mathcal{X}_{\mathcal{B\otimes_\mathcal{A}\mathcal{B}}}\to \mathcal{X}_\mathcal{B}$ les deux projections, on a donc $({q}^{-1}_1((E'))^\textrm{an}=({q}^{-1}_2(E'))^\textrm{an}$, et par surjectivité du morphisme d'analytification $k_3$, on en déduit que ${q}^{-1}_1 (E')={q}^{-1}_2 (E')$, donc il existe une partie $E'_0\subset \mathcal{X}$ telle que ${q}^{-1}(E'_0)=E'$. De plus, on a par surjectivité du morphisme $q$ l'égalité $E'_0=q(E')$, et le morphisme $q$ étant fini, par le théorème de Chevalley, la partie $E'_0$ est constructible, et l'analytification de $E'_0$ est bien $E$ par surjectivité du morphisme d'analytification et du morphisme $q^\textrm{an}$, ce qui montre que $E$ est bien une partie constructible algébrique de $\mathcal{X}^\textrm{an}$.

On traite ensuite le cas où le morphisme $p:\mathcal{M(B)}\to\mathcal{M(A)}$ est un $G$-recouvrement $p:\coprod_i \mathcal{M}(\mathcal{A}_i)\to\mathcal{M(A)}$ de $S:=\mathcal{M(A)}$ par un nombre fini de domaines affinoïdes $S_i:=\mathcal{M}(\mathcal{A}_i)\subset \mathcal{M(A)}$. Considérons donc $E$ une partie de $\mathcal{X}^\textrm{an}=(\Spec C)^\textrm{an}$ telle que $E_\mathcal{B}$ est une partie constructible algébrique de $\mathcal{X}^\textrm{an}_\mathcal{B}$. Puisque $\mathcal{X}$ est affine, on peut trouver une immersion ouverte d'image dense $j:\mathcal{X}\to \bar{\mathcal{X}}$ de $\mathcal{A}$-schémas telle que $\bar{\mathcal{X}}$ soit propre et même projectif au dessus de $\mathcal{A}$. Si l'on note $\mathcal{X}_i=\mathcal{X}\times_\mathcal{A} \mathcal{A}_i$, $\bar{\mathcal{X}_i}=\bar{\mathcal{X}}\times_\mathcal{A} \mathcal{A}_i$, et encore $E$ l'image de $E$ dans $\bar{\mathcal{X}}^\textrm{an}$, alors par hypothèse $E\cap \mathcal{X}^\textrm{an}_i$ est une partie constructible algébrique de $\mathcal{X}^\textrm{an}_i$, c'est donc l'analytification d'une partie constructible $E'_i\subset \mathcal{X}_i$. Maintenant, $E'_i$ est constructible dans $\overline{\mathcal{X}_i}$, donc pour tout $i$, $E\cap \overline{\mathcal{X}_i}^\textrm{an}={E_i '}^\textrm{an}$ est une partie constructible de $\mathcal{X}_i^\textrm{an}$ et $E$ est une partie $G$-constructible de $\overline{\mathcal{X}}^\textrm{an}$. Par la proposition 10.1.12 de \autocite{ducros2017families}, puisque $\overline{\mathcal{X}}$ peut être choisi de dimension finie, $E$ est une partie constructible de $\overline{\mathcal{X}}^\textrm{an}$, et puisque $\overline{\mathcal{X}}$ est propre, par GAGA, il existe une partie constructible $E_0\subset \overline{\mathcal{X}}$ telle que ${E_0}^\textrm{an}=E$. Maintenant, on a l'égalité $(j^{-1}(E_0))^\textrm{an}=E$, et $j^{-1}(E_0)$ est encore une partie constructible de $\mathcal{X}$, ce qui montre que $E$ est bien une partie constructible algébrique de $\mathcal{X}^\textrm{an}$.

Maintenant, notons $\mathcal{P}$ la propriété pour un morphisme entre espaces $k$-affinoïdes de vérifier les conclusions du lemme présent. Alors on vient de montrer que les $G$-recouvrements, les morphismes plats finis et surjectifs et les extensions de polyrayon vérifiaient la propriété $\mathcal{P}$. On va traiter le cas d'un morphisme fidèlement plat $p:S':=\mathcal{M(B)}\to S=\mathcal{M(A)}$ quelconque en utilisant le même canevas de démonstration que le théorème $\ref{fibfi}$. 

On vérifie facilement que l'on a l'analogue suivant du lemme $\ref{my}$ : si l'on dispose de $f:S\to T$ et $g:R\to S$ des morphismes d'espaces $k$-affinoïdes alors si $f$ et $g$ vérifient la propriété $\mathcal{P}$, alors $f\circ g$ vérifie $\mathcal{P}$ et si $f\circ g$ vérifie $\mathcal{P}$, alors $f$ vérifie $\mathcal{P}$. Maintenant, en utilisant le diagramme $\ref{strict}$, on voit qu'on peut supposer que le corps $k$ est non trivialement valué et que $S$ et $S'$ sont strictement affinoïdes puisque $S_r\to S$ et $S'_r\to S'$ vérifient la propriété $\mathcal{P}$. Un morphisme avec une section vérifie facilement la propriété $\mathcal{P}$, donc en utilisant le théorème de Ducros sur l'existence de multisections plates et affinoïdes, on peut aussi supposer que $p$ est quasi-fini, plat et surjectif. En réutilisant le paragraphe $\ref{redquaset}$, puisque les $G$-recouvrements et les morphismes finis, plats et surjectifs vérifient $\mathcal{P}$, on en déduit qu'il suffit de montrer le théorème pour un morphisme $p:S'\to S$ quasi-étale, plat et surjectif. Maintenant, en raisonnant localement sur $S$ exactement comme dans $\ref{redplusimple}$ et $\ref{redgtop}$, on peut supposer qu'il existe un revêtement fini galoisien $T\to S$ tel que $S'$ est une union finie $\cup S'_i$ où chaque $S'_i$ est un domaine affinoïde d'un quotient $H_i$ de $T$. Par les propriété de compatibilité de la propriété $\mathcal{P}$ à la composition, et le fait que la propriété $\mathcal{P}$ soit vraie pour les morphismes finis, plat et surjectifs, on voit qu'il suffit de traiter le cas d'un $G$-recouvrement, et puisque ceux-ci vérifient la propriété $\mathcal{P}$, tout morphisme fidèlement plat entre espaces $k$-affinoïdes vérifie la propriété $\mathcal{P}$, ce qui conclut la démonstration.
\end{proof}
\begin{theo} \label{algmorph} Considérons $\mathcal{M(B)}\to \mathcal{M(A)}$ un morphisme fidèlement plat entre espaces $k$-affinoïdes. Considérons maintenant $\mathcal{X}$ et $\mathcal{Y}$ deux $\mathcal{A}$-schémas localement de type fini. Alors un morphisme $f:\mathcal{X}^\mathrm{an}\to\mathcal{Y}^\mathrm{an}$ est algébrique si et seulement si son changement de base ${f_\mathcal{B}:\mathcal{X}_\mathcal{B}^\mathrm{an}\to \mathcal{Y}_\mathcal{B}^\mathrm{an} }$ est algébrique.
\end{theo}

\begin{proof}[Démonstration] Si $\mathcal{X}$ est un $\mathcal{A}$-schéma affine de type fini et $\mathcal{Y}=\mathbb{A}_\mathcal{A}^n$ est l'espace affine de dimension $n\in\mathbb{N}$, alors se donner un morphisme de schémas de $\mathcal{X}$ vers $\mathcal{Y}$ revient à se donner un $n$-uplet de fonctions $(f_i)\in \mathcal{O}(\mathcal{X)}^n$, et se donner un morphisme de $\mathcal{X}^{\textrm{an}}\to \mathcal{Y}^{\textrm{an}}$ revient à se donner un $n$-uplet de fonctions de $\mathcal{O}(\mathcal{X}^{\textrm{an}})^n$, et l'analytification d'un morphisme donné par $(f_1,..,f_n)\in \mathcal{O}(\mathcal{X)}^n$ est le morphisme donné par $(f_1,..,f_n)\in \mathcal{O}(\mathcal{X}^{\textrm{an}})^n$ où $f_i\in \mathcal{O}(\mathcal{X}^{\textrm{an}})$ désigne aussi l'image de $f_i$ par l'injection canonique $\mathcal{O}(\mathcal{X})\to \mathcal{O}(\mathcal{X}^{\textrm{an}})$. Le problème se ramène donc à la proposition précédente $\ref{coeur}$.

Maintenant, si $\mathcal{X}$ est un $\mathcal{A}$-schéma localement de type fini quelconque et $\mathcal{Y}=\mathbb{A}_\mathcal{A}^n$ est l'espace affine de dimension $n\in\mathbb{N}$, et qu'on dispose d'un morphisme d'espaces $\mathcal{A}$-analytique $f:\mathcal{X}^{\textrm{an}}\to\mathcal{Y}^{\textrm{an}}$ dont le changement de base $f_\mathcal{B}$ est algébrique, alors si l'on se donne un Zariski-recouvrement $\mathcal{X}=\cup_i \mathcal{X}_i$ par des $\mathcal{A}$-schémas affines de type fini $\mathcal{X}_i$, par hypothèse, il existe un morphisme de $\mathcal{B}$-schémas $h:\mathcal{X}_\mathcal{B}\to \mathcal{Y}_\mathcal{B}$ dont l'analytification est $f_\mathcal{B}$, donc la restriction de $f$ à $\mathcal{X}^{\textrm{an}}_i$ fournit un morphisme d'espace analytique $f_i:\mathcal{X}^{\textrm{an}}_i\to \mathcal{Y}^{\textrm{an}}$ dont le changement de base $f_{i_\mathcal{B}}$ est algébrique car c'est l'analytification de la composée de $h$ avec l'inclusion $\mathcal{X}_{i_\mathcal{B}}\to \mathcal{X}_\mathcal{B}$. Par le paragraphe précédent, on dispose donc d'un morphisme $g_i:\mathcal{X}_i\to\mathcal{Y}$ dont l'analytification est $f_i$. Maintenant, puisqu'on a l'égalité $(g_i\vert_{\mathcal{X_i\cap \mathcal{X_j}}})^{\textrm{an}}=(g_j\vert_{\mathcal{X_i\cap \mathcal{X_j}}})^{\textrm{an}}=f\vert_{\mathcal{X}_i^{\textrm{an}}\cap \mathcal{X}_j^{\textrm{an}}}$, par le lemme $\ref{analfid}$, les $g_i$ se recollent sur les doubles intersection en un morphisme $g:\mathcal{X}\to \mathcal{Y}$ et comme $({g\vert_{\mathcal{X}_i}})^{\textrm{an}}=f_i$, le morphisme $f$ est bien l'analytification de $g$ et est donc bien algébrique.

Puisque qu'on peut toujours plonger un $\mathcal{A}$-schéma affine $\mathcal{Y}$ de type fini dans un espace affine de dimension finie, et que se donner un morphisme de schémas d'un espace $\mathcal{X}$ vers $\mathcal{Y}$ revient alors à se donner un certain nombre de fonctions globales sur $\mathcal{X}$ qui s'annulent sur un fermé, on en déduit que l'on a le résultat pour $\mathcal{X}$ un $\mathcal{A}$-schéma localement de type fini et $\mathcal{Y}$ affine de type fini au dessus de $\mathcal{A}$.

On se donne maintenant deux $\mathcal{A}$-schémas localement de type fini quelconque $\mathcal{X}$ et $\mathcal{Y}$ et ${f:\mathcal{X}^\textrm{an}\to \mathcal{Y}^\textrm{an}}$ un morphisme de $\mathcal{A}$-espaces analytiques dont le changement de base $f_\mathcal{B}$ est algébrique. Alors si $\mathcal{Y}_i$ est un ouvert affine inclus dans $\mathcal{Y}$, alors la partie $f^{-1}({\mathcal{Y}_i}^\textrm{an})$ est un ouvert de $\mathcal{X}^\textrm{an}$ dont l'image inverse par le morphisme $q^\textrm{an}:\mathcal{X}^\textrm{an}_\mathcal{B}\to \mathcal{X}^\textrm{an}$ est égale à ${f_\mathcal{B}}^\textrm{-1}({\mathcal{Y}_i}_\mathcal{B}^\textrm{an})$, et puisque $f_\mathcal{B}$ est algébrique, c'est une partie ouverte algébrique de $\mathcal{X}_\mathcal{B}$, donc par le lemme $\ref{lemmepartie}$, on en déduit que $f^{-1}({\mathcal{Y}_i}^\textrm{an})$ est une partie localement constructible algébrique de $\mathcal{X}^\textrm{an}$ qui est ouverte donc par le lemme 10.1.10 de \autocite{ducros2017families}, c'est un ouvert algébrique de $\mathcal{X}^\textrm{an}$ donc c'est l'analytification d'un ouvert $\mathcal{X}_i$ de $\mathcal{X}$. Par le paragraphe précédent, il existe un unique morphisme $f_i$ de $\mathcal{A}$-schémas de $\mathcal{X}_i$ vers $\mathcal{Y}_i$ dont l'analytification fournit la restriction de $f$ à $\mathcal{X}_i^\textrm{an}$. Par fidélité du foncteur d'analytification, la collection des $(f_i)$ se recolle en un morphisme de $\mathcal{A}$-schémas de $\mathcal{X}$ vers $\mathcal{Y}$ dont l'analytification fournit $f$, et cela montre bien que $f$ est algébrique, ce que l'on voulait. \qedhere

\end{proof}

On conclut ce papier en donnant une conséquence amusante du théorème précédent.
\begin{prop} Considérons une algèbre $k$-affinoïde $\mathcal{A}$, ainsi qu'un $\mathcal{A}$-schéma localement de type fini $\mathcal{X}$. Alors toute section $s:\mathcal{M(A)}\to \mathcal{X}^\mathrm{an}$ du morphisme canonique $\mathcal{X}^\mathrm{an}\to\mathcal{M(A)}$ est algébrique, c'est-à-dire que c'est l'analytification d'une section schématique de $\mathcal{X}\to \Spec \mathcal{A}$

\end{prop}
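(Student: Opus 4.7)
La stratégie est de se ramener au cas où $\mathcal{X}$ est affine par une application du théorème $\ref{algmorph}$ à un $G$-recouvrement judicieusement choisi, puis de traiter ce cas par un plongement dans un espace affine et un calcul direct.

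Pour la réduction au cas affine, je fixerai un recouvrement $\mathcal{X} = \bigcup_{i \in I} \mathcal{X}_i$ par des ouverts affines. Les images inverses $s^{-1}(\mathcal{X}_i^{\mathrm{an}})$ forment un recouvrement ouvert de $\mathcal{M}(\mathcal{A})$ ; par compacité, je pourrai en extraire un $G$-recouvrement fini par des domaines affinoïdes $W_j = \mathcal{M}(\mathcal{A}_j)$ tel que, pour chaque $j$, on ait $s(W_j) \subset \mathcal{X}_{i(j)}^{\mathrm{an}}$ pour un certain indice $i(j)$. En posant $\mathcal{B} = \prod_j \mathcal{A}_j$, le morphisme canonique $\mathcal{M}(\mathcal{B}) = \coprod_j W_j \to \mathcal{M}(\mathcal{A})$ sera fidèlement plat. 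Puisque $\mathcal{M}(\mathcal{A}) = (\Spec \mathcal{A})^{\mathrm{an}}$, le morphisme $s$ entre dans le cadre du théorème $\ref{algmorph}$, que j'appliquerai pour me ramener à démontrer l'algébricité de $s_{\mathcal{B}}: \mathcal{M}(\mathcal{B}) \to \mathcal{X}_{\mathcal{B}}^{\mathrm{an}}$. Sur chaque composante $W_j$, $s_\mathcal{B}$ se factorise par l'immersion ouverte $(\mathcal{X}_{i(j)})_{\mathcal{A}_j}^{\mathrm{an}} \hookrightarrow \mathcal{X}_{\mathcal{A}_j}^{\mathrm{an}}$, de sorte qu'il suffira de traiter le cas d'un $\mathcal{A}$-schéma affine de type fini.

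Dans ce cas là, je choisirai une immersion fermée $\mathcal{X} \hookrightarrow \mathbb{A}^n_{\mathcal{A}}$ définie par des équations $f_1, \ldots, f_m \in \mathcal{A}[T_1, \ldots, T_n]$. La composée $\tilde{s}: \mathcal{M}(\mathcal{A}) \to \mathcal{X}^{\mathrm{an}} \hookrightarrow \mathbb{A}^{n,\mathrm{an}}_{\mathcal{A}}$ étant une section du morphisme structural $\mathbb{A}^{n,\mathrm{an}}_{\mathcal{A}} \to \mathcal{M}(\mathcal{A})$, elle est entièrement déterminée par les éléments $a_i := \tilde{s}^\ast T_i \in \mathcal{O}(\mathcal{M}(\mathcal{A})) = \mathcal{A}$. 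Le $n$-uplet $(a_1, \ldots, a_n)$ définit alors une section schématique $\sigma: \Spec \mathcal{A} \to \mathbb{A}^n_\mathcal{A}$ dont l'analytification est précisément $\tilde{s}$ ; l'hypothèse que $s$ se factorise par $\mathcal{X}^{\mathrm{an}}$ se traduit par les annulations $f_k(a_1, \ldots, a_n) = 0$ dans $\mathcal{A}$ pour tout $k$, ce qui garantira que $\sigma$ se factorise par $\mathcal{X}$ au niveau schématique, fournissant la section algébrique recherchée.

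Le travail essentiel ayant été accompli dans la démonstration du théorème $\ref{algmorph}$, je n'anticipe pas d'obstacle majeur dans ce plan ; la seule subtilité est de constater que, le recouvrement $(W_j)$ étant adapté à la décomposition affine de $\mathcal{X}$, chaque composante $s_\mathcal{B}|_{W_j}$ s'identifie véritablement à une section dans l'analytifié d'un schéma affine, ce qui est immédiat par compatibilité du foncteur d'analytification avec le changement de base et avec les immersions ouvertes.
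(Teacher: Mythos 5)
Votre preuve est correcte et suit le même squelette global que celle de l'article : on se ramène, via un $G$-recouvrement fini de $\mathcal{M(A)}$ par des domaines affinoïdes adaptés à un recouvrement affine de $\mathcal{X}$, puis via le théorème \ref{algmorph} appliqué au morphisme fidèlement plat $\coprod_j W_j\to\mathcal{M(A)}$, au cas où la section tombe dans l'analytifié d'un ouvert affine. La différence se situe dans le traitement de ce cas local : l'article compactifie $U$ en un $\mathcal{A}$-schéma propre $\overline{U}$ (Nagata) et invoque la pleine fidélité GAGA du foncteur d'analytification restreint aux schémas propres pour algébriser la section $V_x\to \overline{U}^{\mathrm{an}}$, tandis que vous plongez $\mathcal{X}$ dans $\mathbb{A}^n_{\mathcal{A}}$ et utilisez la propriété universelle de l'espace affine analytique : la section est déterminée par $(a_1,\dots,a_n)\in\mathcal{A}^n$, et la factorisation par le sous-espace fermé $\mathcal{X}^{\mathrm{an}}$ équivaut à l'annulation $f_k(a_1,\dots,a_n)=0$ dans $\mathcal{A}$ (annulation exacte, et non seulement ponctuelle, puisque la factorisation est un énoncé de morphismes d'espaces annelés et que l'idéal de $\mathcal{X}^{\mathrm{an}}$ est engendré par les $f_k$ par platitude de $\mathbb{A}^{n,\mathrm{an}}_{\mathcal{A}}\to\mathbb{A}^n_{\mathcal{A}}$). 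Votre variante est plus élémentaire — elle évite Nagata et GAGA et n'utilise que des faits déjà présents dans la preuve du théorème \ref{algmorph} — au prix d'une vérification soigneuse que l'annulation des équations est bien exacte dans $\mathcal{A}$ (ce qui échouerait si l'on ne disposait que d'une factorisation ensembliste, l'élément $f_k(a)$ pouvant alors être un nilpotent non nul). Les deux approches sont valables.
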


\begin{proof}
Si le schéma $\mathcal{X}$ est séparé et quasi-compact, il admet une compactification $\overline{\mathcal{X}}$ au dessus de $\Spec\mathcal{A}$, et $s$ fournit une section à la flèche canonique $\overline{\mathcal{X}}^\mathrm{an}\to \mathcal{M(A)}$, qui par GAGA est algébrique, et cela montre que la section $s$ est algébrique.

Maintenant, on se donne $x\in \mathcal{M(A)}$ et un ouvert affine $U\subset \mathcal{X}$ tel que $s(x)\in U^\mathrm{an}$. Alors il existe un voisinage affinoïde $V_x$ de $x$ dans $\mathcal{M(A)}$ tel que $s(V_x)\subset U^\mathrm{an}$. Par le paragraphe précédent, $s_{\vert V_x} :V_x\to (U\times_\mathcal{A} O(V_x))^\mathrm{an}$ est algébrique. On extrait maintenant un $G$-recouvrement de $\mathcal{M(A)}$ par un nombre fini de domaines affinoïdes $\mathcal{M(A)}=\coprod_{x\in I} V_x$. On a montré que $s\times_\mathcal{A} \coprod_{x\in I} V_x$ était algébrique, et par le théorème $\ref{algmorph}$, on en déduit que la section $s$ est algébrique et cela montre la proposition.
\end{proof}


\end{document}